\documentclass[12pt,oneside,a4paper,reqno]{amsart}
\usepackage[T2A,T1]{fontenc}
\usepackage[utf8]{inputenc}
\usepackage[top=3cm,right=1cm,bottom=3cm,left=3cm]{geometry}

\usepackage[english]{babel}

\usepackage{amsthm}
\usepackage{amsbsy}
\usepackage{amstext}
\usepackage{amssymb}

\usepackage{hyperref}
\usepackage{cite}

\theoremstyle{plain}
\newtheorem{lemma}{Lemma}
\newtheorem{proposition}{Proposition}
\newtheorem{corollary}{Corollary}
\newtheorem{theorem}{Theorem}
\theoremstyle{remark}
\newtheorem{remark}{Remark}

\sloppy
\title[Dynamical bifurcations]{Dynamical bifurcation of multi-frequency oscillations in a fast-slow system}
\author[A. M. Samoilenko]{A. M. Samoilenko$^{\ast}$}\thanks{$^{\ast}$ Institute of Mathematics NAS Ukraine}
\author[I. O. Parasyuk]{I. O. Parasyuk$^{\dag}$}\thanks{$^{\dag}$ National Taras Shevchenko Univesity of Kyiv}
\author[B. V. Repeta]{B. V. Repeta$^{\ddag}$}\thanks{$^{\ddag}$ National Taras Shevchenko Univesity of Kyiv}

\begin{document}
\begin{abstract}We study a dynamical counterpart of bifurcation to invariant torus
for a system of interconnected fast phase variables and slowly varying
parameters. We show that in such a system, due to the slow evolution
of parameters, there arise transient processes from damping oscillations
to multi-frequency ones, asymptotically close to motions on the invariant
torus.\end{abstract}
\maketitle

\section{Introduction}

It was shown in the monograph of Krylov~M.~M. and Bogoliubov~M.~M.~\cite{KryBog34} that non-conservative perturbations of a pair of harmonic oscillators under quite general conditions lead to the birth of a local attractor  homeomorphic to a 2-dimensional torus in a 4-dimensional phase space of such a system. The phenomenon of invariant torus bifurcation due to a stability loss of a limit cycle when a couple of complex multiplicators cross the unit circle as parameters change was examined in~\cite{Nei59, Sack65} and became widely known with appearance of  publications~\cite{RueTak71, MarMcM76} (see also~\cite{Kuz98}). The mathematical framework which enabled us to achieve strict results in the analysis of multidimensional invariant tori bifurcations was developed in~\cite{Hale61, BMS69, Fen71, SamMit76, Sam91, Sam97, SamPet04}. It is also worth highlighting references~\cite{Lang79, Gav80, Bib90, Bib91, Gol98, BibBuk12} among other works in this direction.

The mentioned results are related to the static bifurcations theory, which considers systems of the form $\dot{x}=f(x,u)$, dependent on time-constant parameters $u=(u_{1},\ldots,u_{m})$, where $f(\cdot,\cdot) \colon \mathbb{R}^{d} \times \mathbb{R}^{m} \to \mathbb{R}^{d}$ is a sufficiently smooth mapping. When one claims, for instance, that while the parameters $u$ change along some curve $u=u(s)$, $s \in (-1, 1)$, there is a stable $k$-dimensional invariant torus being born in such a system due to a stability loss of an equilibrium, it means that when $s \in (-1, 0)$, the system
\begin{equation}\label{eq:bwp_1}
	\dot{x} = f(x,u(s))
\end{equation}
has an asymptotically stable equilibrium $x_{\ast}$ which becomes unstable for $s \in (0,1)$, and at the same time there exists a continuous (sufficiently smooth) mapping $\mathfrak{X}(\cdot,\cdot) \colon  \mathbb{T}^{k} \times (-1,1) \to \mathbb{R}^{d}$ such that $\mathfrak{X}(\cdot,0) \equiv x_{\ast}$ for all $s \in (-1,0]$, and for each $s \in (0,1)$ the image of $\mathfrak{X}(\cdot,s) \colon \mathbb{T}^{k} \to \mathbb{R}^{d}$ is an asymptotically orbitally stable invariant toroidal manifold of system~\eqref{eq:bwp_1}.

Due to certain reasons, it sometimes makes sense to view the family of systems as the system in $\mathbb{R}^{d} \times \mathbb{R}^{m}$
\begin{equation}\label{eq:bwp_2}
	\dot{x} = f(x,u),\quad\dot{u}=0.
\end{equation}
Then one can interpret the bifurcation of invariant torus as follows. The system has an invariant set, such that its intersections with planes $\Pi_{s} :=  \left\{  (x,u) \colon u=u(s) \right\}$ for $s \in (0,1)$ are invariant toroidal manifolds $\mathcal{T}_{s} := \mathfrak{X}(\mathbb{T}^{k},s) \times  \left\{  u(s)\right\}$. These manifolds shrink to the point  $\left\{  (x_{\ast}, u(0)) \right\}$ as $s \to +0$, and each manifold $\mathcal{T}_{s}$ is a local attractor of system~\eqref{eq:bwp_2} restriction to the plane $\Pi_{s}$ (which is clearly  invariant). Thus, when one says that the aforementioned bifurcation consists in a birth of an invariant torus in the phase space, it should actually be perceived as a spatial phenomenon rather than a dynamical one.

With the release of publications~\cite{Shi73,Nei85,Nei87}, there began the systematic research of truly dynamical bifurcations --- the effects connected with qualitative changes in a system's dynamics that develop in time and are induced by the actual slow evolution of parameters as they pass certain critical values. One of the most resonant achievements in this direction was connected with the delayed loss of stability phenomenon in so-called fast-slow systems of the form
\begin{equation}\label{eq:slman1}
	\dot{x} = f(x,u,\varepsilon), \quad
	\dot{u} = \varepsilon g(x,u,\varepsilon),
\end{equation}
where $x=(x_{1}, \ldots, x_{d})$ are fast phase variables, $u=(u_{1}, \ldots, u_{m})$ are slowly varying parameters and $\varepsilon$ is a small static parameter. Here we cannot present the complete review of results in dynamical bifurcations theory. Let us just note some of the papers~\cite{Ben91,BuNeSch04,RachSch05,Ano05}. In particular,~\cite{Ben91} depicts the connection between the delayed loss of stability and ``canard''-solutions theory of singularly perturbed systems~\cite{Car84}, whereas in~\cite{RachSch05} a dynamical counterpart of the Andronov -- Hopf bifurcation was studied. Meanwhile, for authors' best knowledge, the information on dynamical counterparts of invariant tori bifurcations seems to be lacking.

In this paper we consider a $(2n+m)$-dimensional system~\eqref{eq:slman1} ($d=2n$) under assumption that its invariant manifold of slow motions (i.~m.~s.~m.) is given by equation $x=0$, i.~e. $f(0,u,\varepsilon)\equiv0$, and for the linear system
\begin{equation*}
	\dot{x} = \left[ f_{x}^{\prime}(0,u,0) + \varepsilon f_{x,\varepsilon}^{\prime\prime}(0,u,0) \right] x,
\end{equation*}
(which is the system of the first approximation for the phase variables $x$  with respect to the i.~m.~s.~m.) one can specify the following three zones in the parameters space: the zone of asymptotic stability $\mathcal{D}_{s}$, indefiniteness zone $\mathcal{D}_{\ast}$, and the zone of complete instability $\mathcal{D}_{u}$ . At the same time, the characteristic equation of the operator $f_{x}^{\prime}(0,u,0)$ has purely imaginary roots for all $u$ from  union of the aforementioned domains. In addition, we assume that the system $\dot{u} = \varepsilon g(0,u,0)$ is convergent and its attractor is some point in $\mathcal{D}_{u}$. Under certain additional conditions it will be shown that in an $O(\sqrt{\varepsilon})$-neighborhood of the i.~m.~s.~m. of system~\eqref{eq:slman1} one can observe the following dynamical bifurcation. Firstly, while during some time of order $O(\varepsilon^{-1})$ the parameters $u(t)$ move inside the zone $\mathcal{D}_{s}$, the phase components of the corresponding solution $x(t)$ exhibit exponentially damping oscillations. Next, after $u(t)$ has passed $\mathcal{D}_{\ast}$ and has entered $\mathcal{D}_{u}$, the oscillations' amplitude starts to grow, and finally, as $t\to+\infty$ the corresponding trajectory of~\eqref{eq:slman1} is attracted to the invariant torus, asymptotically approaching some trajectory on the latter.

To analyze the system in an $O(\sqrt{\varepsilon})$-neighborhood of an i.~m.~s.~m., the scaling transformation $x\mapsto\sqrt{\varepsilon}x$ is applied, after what the problem becomes a non-local one. The establishment of invariant torus' existence itself does not give rise to many essential complications and is done using the same results of \cite{Hale61,SamMit76} in quite the same manner as in~\cite{Bib91,BibBuk12}. However, it is a much harder task to determine  the non-local attraction basin of an invariant torus of the system obtained by the above scaling. For this we have managed to show that the relative measure of the attraction domain is estimated from below with a value of order $1-O(\varepsilon^{k/n})$.

The present article is organized as follows. In Section~\ref{sec:const-normal-sys} we formulate a series of conditions on the system under consideration, construct its partial normal form in phase variables $x$ and make a transition to polar-type variables. In Section~\ref{sec:first_approx} the behavior of solutions of the first approximation system is examined. Section~\ref{sec:main_theorem} contains our main result, which is based on auxiliary propositions of Sections~\ref{sec:pre_analysis} and~\ref{sec:existinvtor}, concerning the existence of an invariant torus and its attraction properties.

\section{Construction of System's Normal Form in Phase Variables\\ and Main Assumptions}\label{sec:const-normal-sys}

From now on we will require  system~\eqref{eq:slman1} to satisfy the following conditions.
\begin{description}
\item [{C1}] The right-hand sides of the system are smooth and bounded. Particularly, $f(\cdot,\cdot,\cdot) \in \mathrm{\hat{C}}^{\infty} \left( \mathbb{R}^{2n} \times \mathbb{R}^{m} \times \mathbb{R} \!\to\! \mathbb{R}^{2n} \right)$, $g(\cdot,\cdot,\cdot) \in \hat{\mathrm{C}}^{\infty} \left( \mathbb{R}^{2n} \times \mathbb{R}^{m} \times \mathbb{R} \to\! \mathbb{R}^{m} \right)$, where $\hat{\mathrm{C}}^{\infty} \left( \mathcal{X} \!\to\! \mathcal{Y} \right)$ denotes the space of smooth bounded mappings from domain $\mathcal{X}$ to set $\mathcal{Y}$ with bounded derivatives of all orders.
\item [{C2}] The system has an i.~m.~s.~m. given by equation $x=0$, i.~e. $f(0,u,\varepsilon)=0$ for all $(u,\varepsilon) \in \mathbb{R}^{m} \times \mathbb{R}$.
\item [{C3}] For every $u \in \mathbb{R}^{m}$ the operator $f_{x}^{\prime}(0,u,0)$ has purely imaginary eigenvalues $\pm\mathrm{i}\omega_{j}(u)$, $j=1,\ldots,n$, such that
\begin{equation*}
	\inf_{u \in \mathbb{R}^{m}} \omega_{j}(u) > 0, \quad \inf_{u\in\mathbb{R}^{m}} \left| \omega_{j}(u) - \omega_{k}(u) \right| > 0, \quad j, k = 1, \ldots ,n, \; j \ne k.
\end{equation*}
\end{description}

Then for all natural $N \ge 2$ and $s \ge 2$ we may express system ~\eqref{eq:slman1} as
\begin{equation}\label{eq:smb_fs}
	\dot{x} = \sum_{k=1}^{N} F_{k}(u,\varepsilon) x^{k} + \tilde{F}_{N,s+1}(x,u,\varepsilon) x, \quad
	\dot{u} = \varepsilon \left[ \sum_{k=0}^{N} G_{k}(u,\varepsilon) x^{k} + \tilde{G}_{N+1,s}(x,u,\varepsilon) \right].
\end{equation}
Here $F_{k}(u,\varepsilon)x^{k}$ and $G_{k}(u,\varepsilon)x^{k}$ are $\mathbb{R}^{2n}$- and $\mathbb{R}^{m}$-valued homogeneous forms of degree $k$ of $x$ and polynomials of degree $s$ and $s-1$ in $\varepsilon$ respectively. The remainder terms of Taylor's formula $\tilde{F}_{N,s+1}(x,u,\varepsilon)x$ (here $\tilde{F}_{N,s+1}(x,u,\varepsilon)$ is a $2n\times2n$-matrix) and $\tilde{G}_{N+1,s}(x,u,\varepsilon)$ for $\left\Vert x\right\Vert +\left|\varepsilon\right|\to0$ satisfy the order relations
\begin{equation*}
	\left\Vert \tilde{F}_{N,s+1}(x,u,\varepsilon) \right\Vert = O \left( \left\Vert x \right\Vert ^{N} + \left| \varepsilon \right|^{s+1} \right), \quad
	\left\Vert \tilde{G}_{N+1,s}(x,u,\varepsilon) \right\Vert = O \left( \left\Vert x \right\Vert ^{N+1} + \left| \varepsilon \right|^{s} \right).
\end{equation*}
Without loss of generality we can assume that for a fixed natural $s$ and the corresponding sufficiently small $\varepsilon_{0}>0$ the matrix $F_{1}(u,\varepsilon)$ is in its real normal form on the set $\mathbb{R}^{m} \times (-\varepsilon_{0}, \varepsilon_{0})$
\begin{equation*}
	J(u,\varepsilon) := \mathrm{diag} \left[ \begin{pmatrix}
		\varepsilon\bar{\alpha}_{1}(u,\varepsilon) & -\bar{\omega}_{1}(u,\varepsilon)\\
		\bar{\omega}_{1}(u,\varepsilon) & \varepsilon\bar{\alpha}_{1}(u,\varepsilon)
	\end{pmatrix},
	\ldots,
	\begin{pmatrix}
		\varepsilon\bar{\alpha}_{n}(u,\varepsilon) & -\bar{\omega}_{n}(u,\varepsilon)\\
		\bar{\omega}_{n}(u,\varepsilon) & \varepsilon\bar{\alpha}_{n}(u,\varepsilon)
	\end{pmatrix} \right].
\end{equation*}
Here each function $\bar{\alpha}_{j}(u,\varepsilon)$, $\bar{\omega}_{j}(u,\varepsilon)$ is a polynomial in $\varepsilon$ of degree not greater than $s-1$ and $s$ respectively with smooth coefficients depending on $u$ of class $\mathrm{\hat{C}}^{\infty} \left( \mathbb{R}^{m} \!\to\! \mathbb{R} \right)$, and $\bar{\omega}_{j}(u,0) = \omega_{j}(u)$. To verify this, the following lemma can be used.

\begin{lemma}
Suppose that $A(\cdot,\cdot) \in \hat{\mathrm{C}}^{\infty} \left( \mathbb{R}^{m} \times \mathbb{R} \!\to\! \mathbb{R}^{d \times d} \right)$, $G(\cdot,\cdot) \in \hat{\mathrm{C}}^{\infty} \left (\mathbb{R}^{m} \times \mathbb{R} \!\to\! \mathbb{R}^{m} \right)$, where $\mathbb{R}^{d \times d}$ denotes the space of $d \times d$-matrices with real elements. If for all $u \in \mathbb{R}^{m}$ the matrix $A_{0}(u) := A(u,0)$ has  eigenvalues $\lambda_{j}(u)$, $j=1,\ldots,d,$ such that
\begin{equation}\label{eq:slmaneig}
	\inf_{u  \in \mathbb{R}^{m}} \left| \lambda_{i}(u) - \lambda_{j}(u) \right| > 0 \quad \forall i,j=1,\ldots,d, \quad i\ne j,
\end{equation}
then for any natural $s$ there exists a mapping $T(\cdot,\cdot) \in \mathrm{C}^{\infty} \left( \mathbb{R}^{m} \times \mathbb{R} \!\to\! \mathbb{R}^{d \times d} \right)$ with the following properties. 1)~The mapping $\varepsilon \mapsto T(u,\varepsilon)$ is an $\mathbb{R}^{d \times d}$-valued polynomial of degree $s$ in $\varepsilon$ with coefficients of class  $\hat{\mathrm{C}}^{\infty} \left( \mathbb{R}^{m} \!\to\! \mathbb{R}^{d \times d} \right)$. 2)~There exists  $\varepsilon_{0}>0$ such that $\inf_{(u,\varepsilon) \in \mathbb{R}^{m} \times (-\varepsilon_{0}, \varepsilon_{0})} \left| \det T(u,\varepsilon) \right| > 0$, and after the transformation $x\mapsto T(u,\varepsilon)x$ the system
\begin{equation}\label{eq:lemma1}
	\dot{x} = A(u,\varepsilon) x, \quad
	\dot{u} = \varepsilon G(u,\varepsilon)
\end{equation}
takes the form
\begin{equation*}
	\dot{x} = \left[ B(u,\varepsilon) + \varepsilon^{s+1} \tilde{B}(u,\varepsilon) \right] x, \quad
	\dot{u} = \varepsilon G(u,\varepsilon),
\end{equation*}
where the matrix $B(u,\varepsilon) = \sum_{k=0}^{s} \varepsilon^{k} B_{k}(u)$ is in its real normal form, and
\begin{equation*}
	B_{k}(\cdot) \in \hat{\mathrm{C}}^{\infty} \left( \mathbb{R}^{m} \!\to\! \mathbb{R}^{d \times d}\right),\; k=0,\ldots,s, \quad
	\tilde{B}(\cdot,\cdot) \in \hat{\mathrm{C}}^{\infty} \left( \mathbb{R}^{m} \times (-\varepsilon_{0}, \varepsilon_{0}) \!\to\! \mathbb{R}^{d \times d} \right).
\end{equation*}
\end{lemma}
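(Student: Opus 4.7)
The plan is to construct $T(u,\varepsilon) = T_0(u)S(u,\varepsilon)$ in two stages, peeling off the $\varepsilon$-dependent perturbation of the matrix one power of $\varepsilon$ at a time. First I produce a smooth bounded $T_0(u) \in \hat{\mathrm{C}}^{\infty}$ bringing $A(u,0)$ to its real normal form $B_0(u)$: thanks to the uniform spectral gap~\eqref{eq:slmaneig}, the eigen-projectors
\begin{equation*}
P_j(u) = \frac{1}{2\pi\mathrm{i}}\oint_{\gamma_j(u)}(\zeta I - A(u,0))^{-1}\,d\zeta
\end{equation*}
can be defined along smoothly varying contours each enclosing a single eigenvalue; pairing complex-conjugate one-dimensional projectors into real two-dimensional invariant subspaces gives the required $T_0$, with $\inf_u|\det T_0(u)| > 0$ and derivatives of every order bounded on $\mathbb{R}^m$.

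Next I look for $S(u,\varepsilon) = I + \varepsilon S_1(u) + \cdots + \varepsilon^{s}S_{s}(u)$. After the substitution $x = T_0(u)S(u,\varepsilon)y$ the equation for $y$ takes the form $\dot y = S^{-1}[\hat A S - \varepsilon(\partial_u S)G]y$ with an explicit $\hat A(u,\varepsilon) \in \hat{\mathrm{C}}^{\infty}$ satisfying $\hat A(u,0) = B_0(u)$. Equating the coefficients of $\varepsilon^{k}$ for $k=1,\ldots,s$ on both sides of $\hat A S - \varepsilon(\partial_u S)G = SB$ yields the cohomological equation
\begin{equation*}
B_0(u)S_k(u) - S_k(u)B_0(u) + B_k(u) = R_k(u),
\end{equation*}
in which $R_k$ is fully determined by $\hat A_0,\ldots,\hat A_k$, by the already chosen $S_j,\,B_j$ with $j<k$, and by their first $u$-derivatives through the $\varepsilon(\partial_u S)G$ term. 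To solve it I use that the operator $\mathrm{ad}_{B_0(u)}\colon M\mapsto B_0 M - MB_0$ on $\mathbb{R}^{d\times d}$ has spectrum $\{\lambda_i(u)-\lambda_j(u)\}_{i,j}$, whose nonzero elements are uniformly separated from $0$ by~\eqref{eq:slmaneig}; hence $\mathbb{R}^{d\times d} = \ker\mathrm{ad}_{B_0(u)} \oplus \mathrm{im}\,\mathrm{ad}_{B_0(u)}$ with smooth uniformly bounded projectors $\Pi^{\|}(u), \Pi^{\perp}(u)$, and the restriction $\mathrm{ad}_{B_0(u)}|_{\mathrm{im}}$ has a uniformly bounded smooth inverse. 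I set $B_k(u) := \Pi^{\|}(u)R_k(u)$, which commutes with $B_0(u)$ and therefore preserves the block structure of the real normal form, and $S_k(u) := (\mathrm{ad}_{B_0(u)}|_{\mathrm{im}})^{-1}\Pi^{\perp}(u)R_k(u)$. After running the induction for $k=1,\ldots,s$, the remaining terms of order $\ge\varepsilon^{s+1}$ in $S^{-1}[\hat A S - \varepsilon(\partial_u S)G - SB]$ collect into $\varepsilon^{s+1}\tilde B(u,\varepsilon)$, and invertibility of $T(u,\varepsilon)$ on some $(-\varepsilon_0,\varepsilon_0)$ follows from $T = T_0(I + O(\varepsilon))$ together with the uniform lower bound on $|\det T_0|$.

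The main obstacle will be keeping every object inside $\hat{\mathrm{C}}^{\infty}$, i.e.\ smooth and \emph{globally} bounded in $u$ together with every derivative. This reduces to uniform smooth boundedness of the projectors $P_j$, $\Pi^{\|}$, $\Pi^{\perp}$, and of $(\mathrm{ad}_{B_0}|_{\mathrm{im}})^{-1}$; all four follow from~\eqref{eq:slmaneig} through the contour-integral representation, since the contours can be chosen at a fixed positive distance from the spectrum uniformly in $u$, making the resolvent and all its $u$-derivatives uniformly bounded along them. Sums, products and pointwise inverses of elements of $\hat{\mathrm{C}}^{\infty}$ (the last provided the original is uniformly invertible) remain in $\hat{\mathrm{C}}^{\infty}$, so the property is inherited at every step of the induction.
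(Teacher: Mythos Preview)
Your proposal is correct and follows essentially the same route as the paper: diagonalize $A_0(u)$ by a smooth $T_0$, then solve the cohomological equations $\mathrm{ad}_{B_0(u)}S_k = R_k - B_k$ order by order in $\varepsilon$ via the splitting $\mathbb{R}^{d\times d} = \ker\mathrm{ad}_{B_0(u)} \oplus \mathrm{im}\,\mathrm{ad}_{B_0(u)}$. The paper's only extra observation is that the real normal form $B_0(u)$ is diagonalized by a single \emph{constant} matrix $S$ (independent of $u$), so the kernel/image projectors are themselves constant, which slightly simplifies the $\hat{\mathrm{C}}^{\infty}$ bookkeeping you handled via contour integrals.
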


\begin{proof}
Since condition~\eqref{eq:slmaneig} is met, there exists a mapping $T_{0}(\cdot) \in \hat{\mathrm{C}}^{\infty} \left( \mathbb{R}^{m} \!\to\! \mathbb{R}^{d \times d} \right)$ such that the matrix $B_{0}(u) := T_{0}^{-1}(u)A_{0}(u)T_{0}(u)$ is in its real normal form. Moreover, there exists a constant matrix $S$, with complex elements in general, such that $S^{-1}B_{0}(u)S$ is diagonal. Let us construct a formal change of variables
\begin{equation*}
	x \mapsto \sum_{k\ge0} \varepsilon^{k} T_{k}(u) x
\end{equation*}
with the coefficients $T_{k}(\cdot) \in \hat{\mathrm{C}}^{\infty} \left( \mathbb{R}^{m} \!\to\! \mathbb{R}^{d \times d} \right)$ which transforms system~\eqref{eq:lemma1} into
\begin{equation*}
	\dot{x} = \sum_{k\ge0} \varepsilon^{k} B_{k}(u) x, \quad
	\dot{u} = \varepsilon G(u,\varepsilon),
\end{equation*}
where $B_{k}(\cdot) \in \hat{\mathrm{C}}^{\infty} \left( \mathbb{R}^{m} \!\to\! \mathbb{R}^{d \times d} \right)$ and $B_{k}(u)$ commutes with $B_{0}(u)$ for all $k \ge 1$.

To do so, let us introduce the dot product  $\left\langle X,Y \right\rangle := \mathrm{tr}(XY)$ in $\mathbb{R}^{d \times d}$ and note that as
\begin{equation*}
	\left\langle ZX - XZ, Y \right\rangle = \mathrm{tr} \left[ (ZX - XZ) Y \right] = \mathrm{tr} (YZX - ZYX) = -\left\langle X, ZY - YZ \right\rangle,
\end{equation*}
the operator $X \mapsto \mathrm{ad}_{Z}X := ZX - XZ$ is skew symmetric  for all $Z \in \mathbb{R}^{d \times d}$, thus, $\mathbb{R}^{d \times d} = \ker \mathrm{ad}_{Z} \oplus \mathrm{im} \, \mathrm{ad}_{Z}$ (the sum is orthogonal and $\mathrm{ad}_{Z}$-invariant). Then, if for any arbitrary $Y \in \mathbb{R}^{d \times d}$ we denote its orthogonal projection on $\ker \mathrm{ad}_{Z}$ by $Y_{0}$, the equation $\mathrm{ad}_{Z}X = Y - Y_{0}$ with a  fixed $Z$ has a unique solution $X \in \mathrm{im} \, \mathrm{ad}_{Z}$. In addition to this, if $Z$ has $N$ different eigenvalues, then there exists a non-degenerate matrix $S$ (with complex elements) such that $S^{-1} Z S$ is a diagonal matrix. Therefore, $X \in \ker \mathrm{ad}_{Z}$ if and only if $S^{-1} X S$ is diagonal.

Next, let $\sum_{i \ge 0} \varepsilon^{i} A_{i}(u)$ and $\sum_{j \ge 0} \varepsilon^{j} G_{j}(u)$ be formal expansions of $A(u,\varepsilon)$ and $G(u,\varepsilon)$ by $\varepsilon$ respectively.  Equating coefficients of powers of $\varepsilon$ in the formal equality
\begin{equation*}
	\varepsilon \sum_{i \ge 0} \varepsilon^{i} \frac{\partial T_{i}}{\partial u} \sum_{j \ge 0} \varepsilon^{j} G_{j}(u) +
	\sum_{i \ge 0} \varepsilon^{i} T_{i}(u) \sum_{j \ge 0} \varepsilon^{j} B_{k}(u) =
	\sum_{i \ge 0} \varepsilon^{i} A_{i}(u) \sum_{j \ge 0} \varepsilon^{j} T_{j}(u),
\end{equation*}
which is satisfied by $B_{k}(u)$ and $T_{k}(u)$, we obtain relations
\begin{equation*}
\begin{gathered}
	T_{0}(u)B_{0}(u) = A_{0}(u)T_{0}(u),\\
	T_{k}(u)B_{0}(u) + T_{0}(u)B_{k}(u) = A_{0}(u)T_{k}(u) + A_{k}(u)T_{0}(u) + R_{k}(u), \quad k=1,2,\ldots,
\end{gathered}
\end{equation*}
in which every operator $R_{k}(u)$ is defined by $T_{i}(u)$, $B_{j}(u)$, $A_{l}(u)$ with indices less than $k$. If we assign $T_{k}(u) := T_{0}(u)X_{k}(u)$ for $k \ge 1$ and multiply all of the equalities by $T_{0}^{-1}(u)$ on the left side, we will have
\begin{equation*}
\begin{gathered}
	B_{0}(u) = T_{0}^{-1}(u)A_{0}(u)T_{0}(u),\\
	-\mathrm{ad}_{B_{0}(u)}X_{k}(u) = T_{0}^{-1}(u)A_{k}(u)T_{0}(u) + R_{k}(u)-B_{k}(u), \quad k \ge 1.
\end{gathered}
\end{equation*}
Now it is possible to explicitly determine $X_{k}(u) \in \mathrm{im} \, \mathrm{ad}_{B_{0}(u)}$ by replacing $B_{k}(u)$ with the orthogonal projection of matrix $P_{k}(u) := T_{0}^{-1}(u)A_{k}(u)T_{0}(u) + R_{k}(u)$ on $\ker \mathrm{ad}_{B_{0}(u)}$. (It follows from what was mentioned before that $B_{k}(u)$ equals the diagonal part of matrix $S^{-1}P_{k}(u)S$ multiplied by $S$ on its left side and by $S^{-1}$ on its right side).

Clearly, the desired non-formal transformation is $T(u, \varepsilon) = \sum_{k=0}^{s} \varepsilon^{k} T_{k}(u)$.
\end{proof}

Set $s_{j} \in \mathbb{C}^{2n}$ to be an eigenvector of matrix $J_{0}(u) := F_{1}(u,0)=J(u,0)$ which corresponds to the eigenvalue $\mathrm{i}\omega_{j}(u)$, $k=1,\ldots,n$. Since
\begin{equation*}
	J_{0}(u) = \mathrm{diag}\left[ \begin{pmatrix}
		0 & -\omega_{1}(u)\\
		\omega_{1}(u) & 0
	\end{pmatrix},
	\ldots,
	\begin{pmatrix}
		0 & -\omega_{n}(u)\\
		\omega_{n}(u) & 0
	\end{pmatrix} \right],
\end{equation*}
the vectors $s_{j}$ are independent of $u$. Let us compose a matrix $S$, whose first $n$ columns are the vectors $s_{1},\ldots,s_{n}$ and last $n$ columns are their complex conjugates respectively, and let us define basis forms
\begin{equation}\label{eq:bwp_basform}
	\varsigma_{\mathbf{q}}(y) := [S^{-1}y]^{\mathbf{q}}, \quad
	e_{i,\mathbf{q}}(y) = \varsigma_{\mathbf{q}}(y)s_{i},
\end{equation}
where $\mathbf{q} := (q_{1}, \ldots, q_{2n}) \in \mathbb{Z}_{+}^{2n}$, $x^{\mathbf{q}} = x_{1}^{q_{1}} \cdots  x_{2n}^{q_{2n}}$.

We can now proceed to the construction of a transformation that converts the $N$-jet of system~\eqref{eq:smb_fs} to its normal form in fast variables under an extra assumption of the absence of resonances up to a certain order between frequencies $\omega_{k}(u)$, $k=1,\ldots,n$. To make the corresponding statement, we define an $n\times2n$-matrix $I=[E_{n};-E_{n}]$, where $E_{n}$ is the $n$-dimensional identity matrix, assign
\begin{equation*}
	\omega(u) := \left( \omega_{1}(u), \ldots, \omega_{n}(u) \right), \quad
	\left| \mathbf{q} \right| := \left| q_{1} \right| + \cdots + \left| q_{2n} \right|,
\end{equation*}
denote the $i$-th unit vector of the coordinate space $\mathbb{R}^{2n}$ (i.~e. the vector, whose $i$-th coordinate is equal to $1$ and the rest are zeroes) by $\mathbf{e}_{i}$, and introduce the following sets for positive numbers $\nu$ and $\sigma$
\begin{equation*}
\begin{gathered}
	\mathcal{A}_{i}(N,\nu) :=  \left\{  u \in \mathbb{R}^{m} \colon \; \left| \left\langle \omega(u), I(\mathbf{q} - \mathbf{e}_{i}) \right\rangle \right| > \nu \; \forall \mathbf{q} \in \mathbb{Z}_{+}^{2n} \colon 2 \le \left| \mathbf{q} \right| \le N, \; I (\mathbf{q} - \mathbf{e}_{i}) \ne 0 \right\},\\
	\mathcal{A}_{0}(N,\nu) :=  \left\{  u \in \mathbb{R}^{m} \colon \; \left| \left\langle  \omega(u), I\mathbf{q} \right\rangle \right| > \nu \; \forall \mathbf{q} \in \mathbb{Z}_{+}^{2n} \colon 2 \le \left| \mathbf{q} \right| \le N, \; I\mathbf{q} \ne 0 \right\},\\
	\mathcal{A}(N,\nu) := \bigcap_{i=0}^{n} \mathcal{A}_{i}(N,\nu), \quad
	B_{\delta}^{2n}(y_{0}) :=  \left\{  y \colon \left\Vert y - y_{0} \right\Vert < \delta \right\}, \quad
	B_{\delta}^{2n} := B_{\delta}^{2n}(0),\\
	\mathfrak{R}_{0}(N) :=  \left\{  \mathbf{q} \in \mathbb{Z}_{+}^{2n} \colon 0 \le \left| \mathbf{q} \right| \le N, \;  I\mathbf{q} = 0 \right\}, \;
	\mathfrak{R}_{i}(N) :=  \left\{  \mathbf{q} \in \mathbb{Z}_{+}^{2n} \colon 2 \le \left| \mathbf{q} \right| \le N, \;  I (\mathbf{q} - \mathbf{e}_{i}) = 0 \right\} .
\end{gathered}
\end{equation*}

\begin{proposition}\label{prop:bwp_1}
Suppose that conditions~\textbf{C1}--\textbf{C3} are met, and $s \ge 2$ is a fixed natural number. Also, for some $N \in \mathbb{N}$, $N>2$ and $\nu>0$ let the set $\mathcal{A}(N,\nu)$ be non-empty. Then there exist numbers $\delta>0$ and $\varepsilon_{0}>0$ such that after the change of variables
\begin{equation}\label{eq:bwp_ch_v}
	x = y + \sum_{k=2}^{N} X_{k}(v,\varepsilon) y^{k}, \quad
	u = v + \varepsilon \sum_{k=1}^{N} U_{k}(v,\varepsilon) y^{k}, \quad
	(y, v, \varepsilon) \in B_{\delta}^{2n} \times \mathcal{A}(N,\nu) \times (-\varepsilon_{0},\varepsilon_{0}),
\end{equation}
where
\begin{equation*}
	X_{k}(v, \varepsilon) y^{k} = \sum_{j=0}^{s} \varepsilon^{j} X_{k,j}(v) y^{k}, \quad
	U(v, \varepsilon) = \sum_{j=0}^{s-1} \varepsilon^{j} U_{k,j}(v) y^{k},
\end{equation*}
and
\begin{equation*}
	X_{k,j}(\cdot) y^{k} \in \hat{\mathrm{C}}^{\infty} \left( \mathcal{A}(N,\nu) \!\to\! \mathbb{R}^{2n} \right), \;
	U_{k,j}(\cdot) y^{k} \in \hat{\mathrm{C}}^{\infty} \left( \mathcal{A}(N,\nu) \!\to\! \mathbb{R}^{m} \right) \;
	\forall y \in \mathbb{R}^{2n}, \; k = 1, \ldots, s,
\end{equation*}
system~\eqref{eq:smb_fs} takes the form
\begin{equation}\label{eq:bwp_nf1}
\begin{aligned}
	\dot{y} &= J(v,\varepsilon) y + \sum_{i=1}^{2n} \sum_{\mathbf{q} \in \mathfrak{R}_{k}(N)} H_{i,\mathbf{q}}(v,\varepsilon) e_{i,\mathbf{q}}(y) + \tilde{H}_{N,s+1}(y,v,\varepsilon) y,\\
	\dot{v} &= \varepsilon \left[ \sum_{\mathbf{q} \in \mathfrak{R}_{0}(N)} \varsigma_{\mathbf{q}}(y) C_{\mathbf{q}}(v,\varepsilon) + \tilde{C}_{N+1,s}(y,v,\varepsilon) \right].
\end{aligned}
\end{equation}
Additionally,
\begin{equation*}
	H_{i,\mathbf{q}}(v,\varepsilon) = \sum_{j=0}^{s} \varepsilon^{j} H_{i,\mathbf{q},j}(v), \quad
	C_{\mathbf{q}}(v,\varepsilon) = \sum_{j=0}^{s-1} \varepsilon^{j} C_{\mathbf{q},j}(v),
\end{equation*}
where $H_{i,\mathbf{q},j}(\cdot) \in \hat{\mathrm{C}}^{\infty} \left( \mathcal{A}(N,\nu) \!\to\! \mathbb{C} \right), \;  C_{\mathbf{q},j}(\cdot) \in \hat{\mathrm{C}}^{\infty} \left( \mathcal{A}(N,\nu) \!\to\! \mathbb{C}^{m} \right)$, and the remainder terms of Taylor's formula $\tilde{H}_{N,s+1}(y,v,\varepsilon) y$ and $\tilde{C}_{N+1,s}(y,v,\varepsilon)$ satisfy the order relations
\begin{equation*}
	\left\Vert \tilde{H}_{N,s+1}(y,v,\varepsilon) \right\Vert = O \left( \left\Vert y \right\Vert ^{N} + \varepsilon^{s+1} \right), \quad
	\left\Vert \tilde{C}_{N+1,s}(y,v,\varepsilon) \right\Vert = O\left( \left\Vert y \right\Vert^{N+1} + \varepsilon^{s} \right), \quad
	\left\Vert y \right\Vert +\left| \varepsilon \right| \to 0.
\end{equation*}
\end{proposition}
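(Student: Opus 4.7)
The plan is to construct the transformation~\eqref{eq:bwp_ch_v} by a Poincar\'e--Dulac style iterative normalization, eliminating non-resonant monomials order by order in both the polynomial degree $k$ in $y$ and the power $j$ in $\varepsilon$. As preparation I would pass to the complex coordinates $z = S^{-1}y$ in which $J_{0}(v)$ becomes $\mathrm{diag}(\mathrm{i}\omega_{1}(v), \ldots, \mathrm{i}\omega_{n}(v), -\mathrm{i}\omega_{1}(v), \ldots, -\mathrm{i}\omega_{n}(v))$, so that the basis forms~\eqref{eq:bwp_basform} reduce to the monomial vector fields $z^{\mathbf{q}}\mathbf{e}_{i}$ and the scalar monomials $z^{\mathbf{q}}$. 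A direct computation yields the homological eigenvalues
\begin{equation*}
[J_{0}(v)z,\, z^{\mathbf{q}}\mathbf{e}_{i}] = \mathrm{i}\langle\omega(v), I(\mathbf{q}-\mathbf{e}_{i})\rangle\, z^{\mathbf{q}}\mathbf{e}_{i}, \qquad
\nabla_{z}(z^{\mathbf{q}}) \cdot J_{0}(v)z = \mathrm{i}\langle\omega(v), I\mathbf{q}\rangle\, z^{\mathbf{q}},
\end{equation*}
which identifies the kernels of the two homological operators precisely with the index sets $\mathfrak{R}_{i}(N)$ and $\mathfrak{R}_{0}(N)$.

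Next I would proceed by a nested induction, outer loop over $k=2,\ldots,N$ and inner loop over $j=0,\ldots,s$, substituting the partial transformation into~\eqref{eq:smb_fs}, expanding, and collecting coefficients of $\varepsilon^{j}y^{k}$. Each step reduces to a pair of homological equations: $[J_{0}(v)y,X_{k,j}(v)y^{k}] = P_{k,j}(v,y) - H_{k,j}(v,y)$ for the fast part, and an analogous transport equation for $U_{k,j}$ for the slow part, where the right-hand sides $P_{k,j}$ depend only on the previously determined coefficients with strictly earlier bidegree, together with the original Taylor coefficients $F_{k},G_{k}$ and the higher $\varepsilon$-tail of $J(v,\varepsilon)$. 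Working componentwise in the $z$-basis: for indices $\mathbf{q}$ in the resonant set $\mathfrak{R}_{i}(N)$ (respectively $\mathfrak{R}_{0}(N)$) the operator eigenvalue vanishes and the monomial is absorbed into the normal form coefficient $H_{i,\mathbf{q},j}(v)$ (resp.~$C_{\mathbf{q},j}(v)$), while the corresponding coefficient of $X_{k,j}$ (resp.~$U_{k,j}$) may be taken to be zero; for all other $\mathbf{q}$ with $|\mathbf{q}|\le N$ the uniform small-divisor bound $|\langle\omega(v), I(\mathbf{q}-\mathbf{e}_{i})\rangle| > \nu$ valid on $\mathcal{A}(N,\nu)$ produces a unique smooth bounded solution by direct division.

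Once all coefficients have been determined, I would verify admissibility and the stated regularity. Membership in $\hat{\mathrm{C}}^{\infty}(\mathcal{A}(N,\nu)\!\to\!\cdot)$ of every $X_{k,j}$, $U_{k,j}$, $H_{i,\mathbf{q},j}$, $C_{\mathbf{q},j}$ follows from the explicit division formulas, the uniform lower bound $\nu>0$, and the assumed smoothness and boundedness of the original data; the derivative bounds are propagated through the induction by the Leibniz rule. Choosing $\delta>0$ and $\varepsilon_{0}>0$ small enough makes~\eqref{eq:bwp_ch_v} a small $\mathrm{C}^{\infty}$-perturbation of the identity on $B_{\delta}^{2n}\times\mathcal{A}(N,\nu)\times(-\varepsilon_{0},\varepsilon_{0})$, hence a diffeomorphism onto its image by the implicit function theorem. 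The residual terms $\tilde{H}_{N,s+1}y$ and $\tilde{C}_{N+1,s}$ arise from Taylor-expanding the transformed right-hand side past degree $N$ (resp.~$N+1$) in $y$ and past degree $s$ (resp.~$s+1$) in $\varepsilon$; their order relations follow immediately since all strictly lower-order monomials have been either eliminated or collected into the explicit resonant sums.

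The main technical difficulty I expect is the bookkeeping needed to guarantee that the inductive scheme closes. A single normalization step at bidegree $(k,j)$ produces corrections not only at higher $y$-orders (through composition) but also at higher $\varepsilon$-orders, the latter via the factor $\varepsilon$ in the slow-drift transport $\varepsilon\,\partial_{v}U_{k,j}(v)y^{k}\cdot\dot v$ and via the expansion $J(v,\varepsilon) = J_{0}(v) + \varepsilon(\ldots)$ coming from Lemma~1. To avoid circularity one must order the induction by the weighted degree $k+j$ with $\varepsilon$ assigned weight~$1$ (or equivalently lexicographically in $(k,j)$), and carefully check at each step that the right-hand side of the homological equation involves only strictly earlier data; once this bookkeeping is in place, everything else reduces to the standard division argument sketched above.
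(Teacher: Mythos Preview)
Your proposal is correct and follows essentially the same route as the paper's proof: a Poincar\'e--Dulac normalization, reducing to homological equations whose eigenvalues on the basis forms are exactly $\mathrm{i}\langle\omega(v),I(\mathbf{q}-\mathbf{e}_i)\rangle$ and $\mathrm{i}\langle\omega(v),I\mathbf{q}\rangle$, solved by division off the resonant sets $\mathfrak{R}_i(N)$, $\mathfrak{R}_0(N)$ using the uniform bound $\nu$. The paper organizes the induction lexicographically (outer loop over the $y$-degree $k$, inner loop over the $\varepsilon$-power $j$), which is one of the two orderings you suggest; your remark that the $\varepsilon\,\partial_v$ transport and the $\varepsilon$-tail of $J(v,\varepsilon)$ feed only into strictly later bidegrees is precisely the closure check the paper leaves implicit.
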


\begin{proof}
Let us apply the polynomial transformation~\eqref{eq:bwp_ch_v} to the truncated system
\begin{equation*}
	\dot{x} = J(u,\varepsilon) x + \sum_{k=2}^{N} F_{k}(u,\varepsilon) x^{k}, \quad
	\dot{u} = \varepsilon \sum_{k=0}^{N} G_{k}(u,\varepsilon) x^{k},
\end{equation*}

If the resulting system is
\begin{equation*}
	\dot{y} = J(v,\varepsilon) y + \sum_{k \ge 2} H_{k}(v,\varepsilon) y^{k}, \quad
	\dot{v} = \varepsilon \sum_{k \ge 0} C_{k}(v,\varepsilon) y^{k},
\end{equation*}
then the following equalities must hold
\begin{equation*}
\begin{gathered}
	J(v,\varepsilon) y + \sum_{k \ge 2} H_{k}(v,\varepsilon) y^{k} + \sum_{j=2}^{N} \frac{\partial \left( X_{j}(v,\varepsilon) y^{j} \right)}{\partial y} \left[ J(v,\varepsilon) y + \sum_{i \ge 2} H_{i}(v,\varepsilon) y^{i} \right] +\\
	+ \sum_{j=2}^{N} \frac{\partial \left( X_{j}(v,\varepsilon) y^{j} \right)}{\partial v} \left[ \varepsilon \sum_{i \ge 0} C_{i}(v,\varepsilon) y^{i} \right] = \\
	= J \left( v + \varepsilon \sum_{j=1}^{N} U_{j}(v,\varepsilon) y^{j}, \varepsilon \right) \left[ y + \sum_{i=2}^{N} X_{i}(v,\varepsilon) y^{i} \right] + \\
	+ \sum_{j=2}^{N} F_{j} \left( v + \varepsilon \sum_{l=1}^{N} U_{l}(v,\varepsilon) y^{l}, \varepsilon \right) \left[ y + \sum_{i=2}^{N} X_{i}(v,\varepsilon) y^{i} \right]^{j},
\end{gathered}
\end{equation*}
\begin{equation*}
\begin{gathered}
	\sum_{k \ge 0} C_{k}(v,\varepsilon) y^{k} + \sum_{j=1}^{N} \frac{\partial \left[ U_{j}(v,\varepsilon) y^{j} \right]}{\partial y} \left[ J(v,\varepsilon) y + \sum_{i \ge 2} H_{i}(v,\varepsilon) y^{i} \right] + \\
	+ \sum_{j=1}^{N} \frac{\partial \left( U_{j}(v,\varepsilon) y^{j} \right)}{\partial v} \left[ \varepsilon \sum_{i \ge 0} C_{i}(v,\varepsilon) y^{i} \right] = \sum_{j=0}^{N} G_{j} \left( v + \varepsilon \sum_{l=1}^{N} U_{l}(v,\varepsilon) y^{l}, \varepsilon \right) \left[ y + \sum_{i=2}^{N} X_{i}(v,\varepsilon) y^{i} \right]^{j}.
\end{gathered}
\end{equation*}
Equating the homogeneous forms of $y$ in the left- and right-hand sides, we obtain
\begin{equation*}
\begin{gathered}
	C_{0}(v,\varepsilon) := G_{0}(v,\varepsilon),\\
	C_{1}(v,\varepsilon) y + U_{1}(v,\varepsilon) J(v,\varepsilon) y + \varepsilon \frac{\partial U_{1}(v,\varepsilon)y}{\partial v} C_{0}(v,\varepsilon) = \\
	= \varepsilon \frac{\partial G_{0}(v,\varepsilon)}{\partial v} \left[ U_{1}(v,\varepsilon) y \right] + G_{1}(v,\varepsilon)y,
\end{gathered}
\end{equation*}
\begin{equation*}
\begin{gathered}
	H_{k}(v,\varepsilon) y^{k} + \frac{\partial \left( X_{k}(v,\varepsilon) y^{k} \right)}{\partial y}J(v,\varepsilon) y + \varepsilon \frac{\partial \left( X_{k}(v,\varepsilon) y^{k} \right)}{\partial v} C_{0}(v,\varepsilon) = \\
 	= J(v,\varepsilon) X_{k}(v,\varepsilon) y^{k} + F_{k}(v,\varepsilon) y^{k} + M_{k}(v,\varepsilon) y^{k}, \quad k=2,\ldots,N,
 \end{gathered}
\end{equation*}
\begin{equation*}
\begin{gathered}
	C_{k}(y,\varepsilon) y^{k} + \frac{\partial [U_{k}(v,\varepsilon) y^{k}]}{\partial y} J(v,\varepsilon)y  + \varepsilon \frac{\partial \left( U_{k}(v,\varepsilon) y^{k} \right) }{\partial v} C_{0}(v,\varepsilon) = \\
	= \varepsilon \frac{\partial G_{0}(v,\varepsilon)}{\partial v}U_{k}(v,\varepsilon) y^{k} + G_{1}(v,\varepsilon) X_{k}(v,\varepsilon) y^{k} + G_{k}(v,\varepsilon) y^{k} + N_{k}(v,\varepsilon) y^{k}, \quad k=2,\ldots,N,
\end{gathered}
\end{equation*}
where all of the forms $M_{k}(u,\varepsilon)y^{k}$ and $N_{k}(u,\varepsilon)$ are determined by the forms contained in the original system, the resulting one and in the transformations and have indices less than $k$. The forms within the given equalities can be expanded as $H_{k}(u,\varepsilon) \sim \sum_{j \ge 0} \varepsilon^{j} H_{k,j}(u)$ and so on. Having introduced operators
\begin{equation*}
	\mathfrak{L}_{J_{0}(v)y} \, \cdot := \frac{\partial \, \cdot}{\partial y} J_{0}(y) y - J_{0}(v) \, \cdot, \quad
	\partial_{J_{0}(v)y} \, \cdot := \frac{\partial \, \cdot}{\partial y} J_{0}(y)y
\end{equation*}
and having equated coefficients in left- and right-hand sides, we arrive at the homological equations for determination of $X_{k,j}(u)$, $H_{k,j}(u)$, $U_{k,j}(u)$, $C_{k,j}(u)$:
\begin{equation*}
\begin{gathered}
	U_{1,j}(v) J_{0}(v) y = G_{1,j}(v) y - \sum_{i=0}^{j-1} U_{1,i}(v) J_{j - i}(v) y + \\
	+ \sum_{i=0}^{j-1} \left[ \frac{\partial C_{0,j-i-1}(v)}{\partial v} U_{1,i}(v) y - \frac{\partial U_{1,i}(v)y}{\partial v} C_{0,j-i-1}(v) \right] - C_{1,j}(v) y,
\end{gathered}
\end{equation*}
where $j=0,\ldots,s-1$, and
\begin{equation*}
\begin{gathered}
	\mathfrak{L}_{J_{0}(v)y} X_{k,j}(v) y^{k} = F_{k,j}(v) y^{k} + P_{k,j}(v) y^{k} - H_{k,j}(v) y^{k},\\
	\partial_{J_{0}(v)y} U_{k,j}(v) y^{k} = G_{1,0}(v) X_{k,j}(v) y^{k} + G_{k,j}(v) y^{k} + Q_{k,j}(v) y^{k} - C_{k,j}(v) y^{k},
\end{gathered}
\end{equation*}
where $k=2,\ldots,N, \; j =0,\ldots,s$. Here the forms $P_{k,j}(u)$, $Q_{k,j}(u)$ are constructed using the forms found from the analogous equations on the previous step, i.~e. from the equations in which there is an index $k-1$ instead of $k$. The forms $H_{k,j}(v)$ and $C_{k,j}(v)$ are chosen in such a way that the resulting system has, in some sense, as a simple structure as possible. Since $J_{0}(v)$ is non-degenerate, $U_{1,j}(v)$ is explicitly found by assigning $C_{1,j}(v)=0$, $j\ge0$. After that, we switch over to determining other required forms by replacing them with their expansions in the basis forms~\eqref{eq:bwp_basform}:
\begin{equation*}
	X_{k,j}(v) = \sum_{\left| \mathbf{q} \right| = k} \varsigma_{\mathbf{q}}(y) X_{\mathbf{q},j}(v) = \sum_{i=1}^{2n} \sum_{\left| \mathbf{q} \right| = k} X_{i,\mathbf{q},j}(v) e_{i,\mathbf{q}}(y), \quad
	U_{k,j}(v) = \sum_{\left| \mathbf{q} \right| = k} \varsigma_{\mathbf{q}}(y) U_{\mathbf{q},j}(v), \quad \text{etc.}
\end{equation*}
As $S^{-1}J_{0}(v)S := \mathrm{diag} \left[ \mathrm{i}\omega_{1}(v), \ldots, \mathrm{i}\omega_{n}(v), -\mathrm{i}\omega_{1}(v), \ldots, -\mathrm{i}\omega_{n}(v) \right]$, it is not hard to deduce the equalities
\begin{equation*}
	\partial_{J_{0}(v)y} \varsigma_{\mathbf{q}}(y) = \left[ \varsigma_{\mathbf{q}}(y) \right]_{y}^{\prime} J_{0}(v) y = \frac{\mathrm{d}}{\mathrm{d}t} \Bigl|_{t=0} \left[ \left(S^{-1}\mathrm{e}^{J_{0}(v)t} y \right)^{\mathbf{q}} \right] =\mathrm{i} \left\langle \omega(v), I\mathbf{q} \right\rangle \varsigma_{\mathbf{q}}(y),
\end{equation*}
\begin{equation*}
\begin{gathered}
	\mathfrak{L}_{J_{0}(v)y} e_{i,\mathbf{q}}(y) = \left[ e_{i,\mathbf{q}}(y) \right]_{y}^{\prime} J_{0}(v) y - J_{0}(v) e_{i,\mathbf{q}}(y) = \frac{\mathrm{d}}{\mathrm{d}t} \Bigl|_{t=0} \mathrm{e}^{-J_{0}(v)t} e_{i,\mathbf{q}} \left(\mathrm{e}^{J_{0}(v)t} y \right) = \\
	=\mathrm{i} \left\langle \omega(v), I(\mathbf{q} - \mathbf{e}_{i} \right\rangle e_{i,\mathbf{q}}(y).
\end{gathered}
\end{equation*}
But then the equations for determining the desired forms' coefficients become
\begin{equation*}
\begin{gathered}
\mathrm{i}\left\langle \omega(v), I(\mathbf{q} - \mathbf{e}_{i}) \right\rangle X_{i,\mathbf{q},j}(v) = F_{i,\mathbf{q},j}(v) - P_{i,\mathbf{q},j}(v) - H_{i,\mathbf{q},j}(v),\\
	\mathrm{i}\left\langle \omega(v), I\mathbf{q} \right\rangle U_{\mathbf{q},j}(v) = G_{1,0}(v) X_{\mathbf{q},j}(v) + G_{\mathbf{q},j}(v) + Q_{\mathbf{q},j}(v) - C_{\mathbf{q},j}(v).
\end{gathered}
\end{equation*}
If $v \in \mathcal{A}_{i}(N,\nu)$, $i \ne 0$, then in case $I(\mathbf{q} - \mathbf{e}_{i}) = 0$, we can declare $H_{i,\mathbf{q},j}(v) = F_{i,\mathbf{q},j}(v) - P_{i,\mathbf{q},j}(v)$, $X_{i,\mathbf{q},j}(v)=0$, otherwise $H_{i,\mathbf{q},j}(v) = 0$, and at the same time we will find $X_{i,\mathbf{q},j}(v)$. Similarly, if $v \in \mathcal{A}_{0}(N,\nu)$, then $C_{\mathbf{q},j}(v) = G_{1,0}(v) X_{\mathbf{q},j}(v) + G_{\mathbf{q},j}(v) + Q_{\mathbf{q},j}(v)$; $U_{\mathbf{q},j}(v) = 0$ if $I\mathbf{q} = 0$, and $C_{\mathbf{q},j}(v) = 0$ if $I\mathbf{q} \ne 0$, in which case we explicitly find $U_{\mathbf{q},j}(v)$.

Having performed the constructed change of variables in~\eqref{eq:smb_fs}, we obtain system~\eqref{eq:bwp_nf1}.
\end{proof}

Let us now switch over to complex variables $z=(z_{1}, \ldots, z_{n}) \in \mathbb{C}^{n}$ in system~\eqref{eq:bwp_nf1} with the substitution
\begin{equation*}
	y = \sum_{j=1}^{n} z_{j}s_{j} + \sum_{j=1}^{n} \bar{z}_{j} \bar{s}_{j} = 2\mathrm{Re} \left[ \sum_{j=1}^{n} z_{j}s_{j} \right].
\end{equation*}
System~\eqref{eq:bwp_nf1} takes the form
\begin{equation*}
\begin{gathered}
	\dot{z}_{j} = \left[ \varepsilon \bar{\alpha}_{j}(v,\varepsilon) + \mathrm{i} \bar{\omega}_{j}(v,\varepsilon) + \sum_{3 \le 2 \left| \mathbf{p} \right| + 1 \le N} h_{j,\mathbf{p}}(v,\varepsilon)( \left| z \right|)^{2 \mathbf{p}} \right] z_{j} + \\
	+ O \left( \left\Vert z \right\Vert ^{N+1} + \varepsilon^{s+1} \left\Vert z \right\Vert \right), \quad j=1,\ldots,n,\\
	\dot{v} = \varepsilon \left[ \sum_{0 \le 2 \left| \mathbf{p} \right| \le N} c_{\mathbf{p}}(v,\varepsilon)(\left| z \right|)^{2\mathbf{p}} + O \left( \left\Vert z \right\Vert^{N+1} + \varepsilon^{s} \right) \right],
\end{gathered}
\end{equation*}
where $\mathbf{p} \in \mathbb{Z}_{+}^{n}$ and $h_{j,\mathbf{p}}(v,\varepsilon) := H_{j,(\mathbf{p},\mathbf{p}) + \mathbf{e}_{j}}(v,\varepsilon)$, $c_{\mathbf{p}}(v,\varepsilon) := C_{(\mathbf{p},\mathbf{p})}(v,\varepsilon)$, $(\mathbf{p}, \mathbf{p}) := (p_{1}, \ldots, p_{n}, p_{1}, \ldots, p_{n})$, $(\left| z \right|) = \left( \left| z_{1} \right|,\ldots, \left| z_{n} \right| \right)$. The order relations $O \left( \left\Vert y \right\Vert ^{N+1} + \varepsilon^{s+1} \left\Vert y \right\Vert \right)$ and $O \left( \left\Vert y \right\Vert^{N+1} + \varepsilon^{s} \right)$ denote the remainder terms of the same kind as $\tilde{H}_{N,s+1}(y,v,\varepsilon)y$ and $\tilde{C}_{N+1,s}(y,v,\varepsilon)$ respectively. One can also easily ensure that the equations for $\bar{z}_{j}$ are complex conjugate with the equations for $z_{j}$.

Throughout the rest of this paper we will assume that the parameter $\varepsilon$ is non-negative. Having introduced the polar-type coordinates $r_{j}$, $\varphi_{j}| \bmod 2\pi$ by $z_{j}=\sqrt{\varepsilon  r_{j}} \mathrm{e}^{\mathrm{i} \varphi_{j}}$, $j=1,\ldots,n$, having defined $a_{j,\mathbf{p}}(v,\varepsilon) := \mathrm{Re} \, h_{j,\mathbf{p}}(v,\varepsilon)$, $b_{j,\mathbf{p}}(v,\varepsilon) := \mathrm{Im} \, h_{j,\mathbf{p}}(v,\varepsilon)$, $r = (r_{1}, \ldots, r_{n})$, $\sqrt{r} = (\sqrt{r_{1}}, \ldots, \sqrt{r_{n}})$, $\varphi = (\varphi_{1}, \ldots, \varphi_{n})$ and having assigned $s=(N+1)/2$, we come to the system
\begin{equation}\label{eq:polarnormform}
\begin{aligned}
	\dot{r}_{j} &= 2\varepsilon \left[ \bar{\alpha}_{j}(v,\varepsilon) + \sum_{3\le2\left|\mathbf{p}\right|+1\le N} \varepsilon^{\left|\mathbf{p}\right|-1} a_{j,\mathbf{p}}(v,\varepsilon) r^{\mathbf{p}} \right] r_{j} + \varepsilon^{N/2} \sqrt{r_{j}} R_{j}(r,v,\varphi,\varepsilon),\\
	\dot{v} &= \varepsilon \left[ \sum_{0\le2\left|\mathbf{p}\right|\le N} \varepsilon^{\left|\mathbf{p}\right|} c_{\mathbf{p}}(v,\varepsilon) r^{\mathbf{p}} + \varepsilon^{(N+1)/2} Z(r,v,\varphi,\varepsilon)\right],\\
	\dot{\varphi}_{j} &= \bar{\omega}_{j}(v,\varepsilon) + \sum_{3\le2\left|\mathbf{p}\right|+1\le N} \varepsilon^{\left|\mathbf{p}\right|} b_{j,\mathbf{p}}(v,\varepsilon)r^{\mathbf{p}} + \varepsilon^{N/2} r_{j}^{-1/2} \Phi_{j}(r,v,\varphi,\varepsilon), \quad j=1,\ldots,n,
\end{aligned}
\end{equation}
where the remainder terms can be written as
\begin{equation*}\label{m0}
\begin{gathered}
	R_{j}(r,v,\varphi,\varepsilon) := \sum_{\left| \mathbf{q} \right| = N+1} \tilde{a}_{j,\mathbf{q}}(\sqrt{\varepsilon r},v,\varphi,\varepsilon) \sqrt{r}^{\mathbf{q}} + 2\sum_{\left|\mathbf{q}\right|=1} \hat{a}_{j,\mathbf{q}}(\sqrt{\varepsilon r},v,\varphi,\varepsilon) \sqrt{r}^{\mathbf{q}},\\
	\Phi_{j}(r,v,\varphi,\varepsilon) := \sum_{\left|\mathbf{q}\right|=N+1} \tilde{b}_{j,\mathbf{q}}(\sqrt{\varepsilon r},v,\varphi,\varepsilon) \sqrt{r}^{\mathbf{q}} + \sum_{\left|\mathbf{q}\right|=1} \hat{b}_{j,\mathbf{q}}(\sqrt{\varepsilon r},v,\varphi,\varepsilon) \sqrt{r}^{\mathbf{q}},\\
	Z(r,v,\varphi,\varepsilon) := \sum_{\left|\mathbf{q}\right|=N+1} \tilde{c}_{\mathbf{q}}(\sqrt{\varepsilon r},v,\varphi,\varepsilon) \sqrt{r}^{\mathbf{q}} + \hat{c}(\sqrt{\varepsilon r},v,\varphi,\varepsilon).
\end{gathered}
\end{equation*}
Here the functions $\tilde{a}_{j,\mathbf{q}}(\rho,v,\varphi,\varepsilon), \hat{a}_{j,\mathbf{q}}(\rho,v,\varphi,\varepsilon),\ldots$ are smooth in $\left[ 0, \varrho_{0} \right]^{n} \times \mathbb{R}^{m} \times \mathbb{T}^{n} \times [0, \varepsilon_{0}]$, and $\varrho_{0} > 0$, $\varepsilon_{0} \ll 1$ are some positive constants.

If we declare for a couple of vectors $p=(p_{1}, \ldots, p_{n})$, $q=(q_{1}, \ldots, q_{n})$ an operation $p\bullet q = \left(p_{1}q_{1}, \ldots, p_{n}q_{n}\right)$ and assign
\begin{equation*}
	\alpha(v) := (\alpha_{1}(v), \ldots, \alpha_{n}(v)), \quad
	A(v ):= - \left\{  a_{i,\boldsymbol{\epsilon}_{j}}(v,0) \right\}_{i,j=1}^{n}, \quad
	c(v) := c_{0}(v,0)
\end{equation*}
(here $\boldsymbol{\epsilon}_{j}$ denotes the $j$-th coordinate unit vector of the space $\mathbb{R}^{n}$), it will allow us to rewrite system~\eqref{eq:polarnormform} as
\begin{equation}\label{eq:polnf_vec}
\begin{aligned}
	\dot{r} &= 2\varepsilon \left[ \alpha(v) - A(v) r + \varepsilon B(r,v,\varepsilon) \right] \bullet r + \varepsilon^{N/2} \sqrt{r} \bullet R(r,v,\varphi,\varepsilon),\\
	\dot{v} &= \varepsilon c(v) + \varepsilon^{2} W(r,v,\varepsilon) + \varepsilon^{(N+3)/2} Z(r,v,\varphi,\varepsilon),\\
	\dot{\varphi} &= \omega(v) + \varepsilon \Psi(r,v,\varepsilon) + \varepsilon^{N/2} r^{-1/2} \bullet \Phi(r,v,\varphi,\varepsilon).
\end{aligned}
\end{equation}
All of the functions which appear in this system are bounded in $\left[ 0, \varrho \right] ^{n} \times \mathbb{R}^{m} \times \mathbb{T}^{n} \times [0, \varepsilon_{0}]$, where $0 < \varrho < \varrho_{0} / \varepsilon_{0}$, and their smoothness properties are determined by the corresponding terms of system~\eqref{eq:polarnormform}. Furthermore, the order relations $\left\Vert R(r,v,\varphi,\varepsilon) \right\Vert = O (\left\Vert \sqrt{r} \right\Vert )$, $\left\Vert \Phi(r,v,\varphi,\varepsilon) \right\Vert = O (\left\Vert \sqrt{r} \right\Vert )$ hold uniformly in $v\in B_{R^{\ast}}^{m}$, $\varphi\in\mathbb{T}^{n}$, $\varepsilon\in[0,\varepsilon_{0}]$ when $\left\Vert r\right\Vert \to0$.

For the sake of simplicity, we shall consider the case when the domains $\mathcal{D}_{s}$, $\mathcal{D}_{\ast}$ and $\mathcal{D}_{u}$, which were mentioned in the Introduction, are formed by nested balls. More precisely, let us introduce the following notations for a triplet of numbers $R_{0}, R_{\ast}, R^{\ast}$ such that $0 < R_{0} < R_{\ast} < R^{\ast}$:
\begin{equation*}
\begin{gathered}
	\alpha_{0} := \min_{1\le j\le m} \inf_{v\in B_{R_{0}}^{m}} \alpha_{j}(v), \quad
	\alpha_{\ast} := -\max_{1\le j\le m} \sup_{v\in B_{R^{\ast}}^{m} \setminus B_{R_{\ast}}^{m}} \alpha_{j}(v), \quad
	\alpha^{\ast} := \sup_{v\in B_{R^{\ast}}^{m}} \left\Vert \alpha(v) \right\Vert,\\
	A_{\ast} := \inf_{v\in B_{R^{\ast}}^{m}} \min_{\left\Vert \xi\right\Vert =1} \left\langle A(v) \xi, \xi \right\rangle ,\quad
	A^{\ast} := \sup_{v\in B_{R^{\ast}}^{m}} \left\Vert A(v) \right\Vert
\end{gathered}
\end{equation*}
and state some additional assumptions.
\begin{description}
\item [{C4}] There exist such numbers $R_{0} < R_{\ast} < R^{\ast}$ that $\alpha_{\ast} > 0$, $\alpha_{0} > 0$, $A_{\ast} > 0$.
\item [{C5}] The conditions of resonances absence are fulfilled: $B_{R^{\ast}}^{m} \subset \mathcal{A}(N,\nu)$ for some $N\ge3$, $\nu>0$. Besides, if $N<5$, then $0 \in \mathcal{A}(5,\nu)$.
\item [{C6}] The conditions of system $\dot{v} = c(v)$ convergence are met: there exists $\varkappa>0$ such that $\left\langle  c(v), v \right\rangle < -\varkappa \left\Vert v \right\Vert ^{2}$ for all $v \in B_{R^{\ast}}^{m}$.
\item [{C7}] All components of vector $r^{\ast} := A^{-1}(0) \alpha(0)$  are positive.
\end{description}
Furthermore, without loss of generality we can assert that $r^{\ast} = (1,1,\ldots,1)$. This can always be achieved using the scaling transformation $r \mapsto r^{\ast} \bullet r$.

In accordance with condition~\textbf{C4}, we now take that
\begin{equation}\label{eq:bwp_D_sD_u}
	\mathcal{D}_{s} = B_{R^{\ast}}^{m} \setminus B_{R_{\ast}}^{m}, \quad
	\mathcal{D}_{\ast} = B_{R_{\ast}}^{m} \setminus B_{R_{0}}^{m}, \quad
	\mathcal{D}_{u} = B_{R_{0}}^{m}.
\end{equation}

\section{Analysis of the First Approximation System}\label{sec:first_approx}

In order to have at least rough understanding of how system~\eqref{eq:polnf_vec} solutions behave, let us focus on  the first approximation system
\begin{equation*}
	\dot{r} = 2\varepsilon[ \alpha(v) - A(v)r ] \bullet r, \quad
	\dot{v} = \varepsilon c(v), \quad
	\dot{\varphi} = \omega(v) + \varepsilon\Psi(r,v,\varepsilon).
\end{equation*}
Currently we are mainly interested in dynamics of the subsystem for the variables $r$, $v$
\begin{gather}
	\dot{r} = 2\varepsilon[ \alpha(v) - A(v)r] \bullet r,\label{eq:modsys_r}\\
	\dot{v} = \varepsilon c(v).\label{eq:modsys_v}
\end{gather}

\begin{proposition}\label{prop:arrt_v=00003D0}
The origin is a global attractor of system~\eqref{eq:modsys_v} within a ball $B_{R^{\ast}}^{m}$.
\end{proposition}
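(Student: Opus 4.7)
The plan is to use $V(v) := \|v\|^{2}$ as a Lyapunov function; the proposition is essentially a direct consequence of the convergence condition \textbf{C6}, so the only things to check are forward invariance of the ball and the exponential decay estimate.

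First I would differentiate $V$ along a trajectory of~\eqref{eq:modsys_v}:
\begin{equation*}
	\frac{\mathrm{d}}{\mathrm{d}t} \|v(t)\|^{2} = 2 \langle v(t), \dot{v}(t) \rangle = 2\varepsilon \langle c(v(t)), v(t) \rangle.
\end{equation*}
As long as $v(t) \in B_{R^{\ast}}^{m}$, condition \textbf{C6} gives the differential inequality $\dot V \le -2\varepsilon\varkappa V$.

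The second step is to observe that this inequality is self-propagating: starting from any $v(0) \in B_{R^{\ast}}^{m}$, the function $\|v(t)\|$ is strictly decreasing while the trajectory stays in $B_{R^{\ast}}^{m}$, hence by a standard continuation argument the trajectory cannot reach the boundary $\partial B_{R^{\ast}}^{m}$ in forward time and so remains in $B_{R^{\ast}}^{m}$ for all $t \ge 0$. Consequently \textbf{C6} applies for all $t \ge 0$ and Gronwall (or direct integration) yields
\begin{equation*}
	\|v(t)\|^{2} \le \|v(0)\|^{2} \mathrm{e}^{-2\varepsilon\varkappa t} \quad \forall\, t \ge 0,
\end{equation*}
which shows $v(t) \to 0$ as $t \to +\infty$ for every initial point in $B_{R^{\ast}}^{m}$, i.e.\ the origin is a global attractor within this ball.

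There is essentially no technical obstacle: the whole argument is a one-line Lyapunov computation followed by Gronwall, and the sign hypothesis in \textbf{C6} was tailored precisely to make this work. The only point worth stating explicitly (to avoid any circularity) is the forward invariance of $B_{R^{\ast}}^{m}$, since \textbf{C6} is only assumed inside that ball; this is handled by the monotonicity of $\|v(t)\|$ noted above.
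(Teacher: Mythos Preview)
Your proof is correct and follows exactly the same idea as the paper: differentiate $\langle v,v\rangle$ along solutions, apply condition~\textbf{C6}, and conclude that $\|v(t)\|$ decreases monotonically to zero. The paper's proof is a single sentence to this effect; your version adds the explicit Gronwall estimate and the forward-invariance remark, but the approach is identical.
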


\begin{proof}
Condition \textbf{C6} implies that along every solution of system~\eqref{eq:modsys_v} the function $\left\langle v,v\right\rangle$ tends to zero monotonously.
\end{proof}

Let $\left\{  v(t)\right\} _{t\ge0}$ be a forward trajectory of system~\eqref{eq:modsys_v} and let us consider the solution $r(\cdot)$ of the system
\begin{equation*}
	\dot{r} = 2\varepsilon[ \alpha(v(t)) - A(v(t))r] \bullet r,
\end{equation*}
such that $r(0) \in (0,\infty)^{n}$. Since the $j$-th component $r_{j}(\cdot)$ of this solution can be viewed as a non-trivial solution of a linear homogeneous equation with a continuous coefficient, it yields $r_{j}(t)>0$ for all $t\ge0$.

\begin{proposition}\label{prop:attr_triv_sol}
Suppose that $v(0) \in \mathcal{D}_{s}$ and $T^{\ast} = \sup  \left\{  t \ge 0 \colon v(t) \in \mathcal{D}_{s}\right\}$. Then $\left| r(t) \right| \le \left| r(0) \right| \mathrm{e}^{-2\varepsilon\alpha_{\ast}t}$ for all $t \in [0,T^{\ast}]$.
\end{proposition}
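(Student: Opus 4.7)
\medskip

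\noindent\textbf{Proof plan.}
The plan is to use a comparison/Gronwall-type argument on a suitably chosen scalar quantity built from the components of $r(\cdot)$. The key input is condition \textbf{C4}, which, via the definition of $\alpha_{\ast}$, forces $\alpha_{j}(v(t))\le -\alpha_{\ast}$ for \emph{every} $j=1,\ldots,n$ whenever $v(t)\in\mathcal{D}_{s}$. Together with the forward invariance of the positive orthant for the $r$-equation (already noted: each $r_{j}(\cdot)$ is a nontrivial solution of a scalar linear equation, hence $r_{j}(t)>0$ on $[0,T^{\ast}]$), this will let us convert the componentwise bound on $\alpha_{j}(v(t))$ into a differential inequality for a single scalar function of $r(t)$.

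First I would read $|r(t)|$ as $\sum_{j=1}^{n}r_{j}(t)$, which equals the usual $\ell^{1}$-norm on the positive orthant. Differentiating along the flow \eqref{eq:modsys_r} gives
\begin{equation*}
	\frac{d}{dt}|r(t)| = 2\varepsilon\sum_{j=1}^{n}\alpha_{j}(v(t))\,r_{j}(t) - 2\varepsilon\bigl\langle A(v(t))r(t),\,r(t)\bigr\rangle,
\end{equation*}
because $\sum_{j}[A(v)r]_{j}r_{j}=\langle A(v)r,r\rangle$. On $[0,T^{\ast}]$ the definition of $\alpha_{\ast}$ and the positivity of each $r_{j}$ give $\sum_{j}\alpha_{j}(v(t))r_{j}\le -\alpha_{\ast}|r(t)|$, while the definition of $A_{\ast}$ yields $\langle A(v(t))r(t),r(t)\rangle\ge A_{\ast}\|r(t)\|^{2}\ge 0$.

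Combining the two estimates discards the quadratic $A$-term and leaves
\begin{equation*}
	\frac{d}{dt}|r(t)|\le -2\varepsilon\alpha_{\ast}|r(t)|,\qquad t\in[0,T^{\ast}],
\end{equation*}
which is an autonomous scalar differential inequality. A standard application of Gronwall's inequality integrates it to $|r(t)|\le|r(0)|e^{-2\varepsilon\alpha_{\ast}t}$, which is precisely the desired bound.

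The only mild obstacle is the choice of scalar functional: a naive attempt with the Euclidean norm leads to a differential inequality containing the cross-term $\sum_{j}[A(v)r]_{j}r_{j}^{2}$, which has no obvious sign since $A(v)$ need not be diagonal and off-diagonal entries are unconstrained. Using the $\ell^{1}$-type quantity $\sum_{j}r_{j}$ turns the $A$-contribution into the genuine quadratic form $\langle A(v)r,r\rangle$, where condition \textbf{C4} applies directly and allows the term to be thrown away, yielding a clean Gronwall step.
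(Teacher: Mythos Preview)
Your proof is correct and follows essentially the same route as the paper's own argument: differentiate $|r(t)|=\sum_{j}r_{j}(t)$ along \eqref{eq:modsys_r}, use condition~\textbf{C4} to bound the linear term by $-\alpha_{\ast}|r|$ and the quadratic term by $-A_{\ast}\|r\|^{2}\le 0$, then integrate the resulting scalar inequality. Your additional commentary on why the $\ell^{1}$-type quantity is the natural choice is a welcome clarification but does not depart from the paper's method.
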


\begin{proof}
By condition~\textbf{C4} on the interval $[0,T^{\ast}]$ we get
\begin{equation*}
	\frac{\mathrm{d} \left| r(t) \right|}{\mathrm{d}t} \le 2\varepsilon \left[ -\alpha_{\ast} \left| r(t) \right| - \left\langle  A(v(t))r(t), r(t) \right\rangle \right] \le 2\varepsilon \left[ -\alpha_{\ast} \left| r(t) \right| - A_{\ast} \left\Vert r(t) \right\Vert ^{2} \right ] \le - 2\varepsilon \alpha_{\ast} \left| r(t) \right|,
\end{equation*}
which provides the estimate for $\left| r(t) \right|$.
\end{proof}

\begin{corollary}\label{cor:deriv_|r|_1}
For all $v \in \mathcal{D}_{s}$ the derivative of $\left| r \right|$ along trajectories of subsystem~\eqref{eq:modsys_r} does not exceed $-2\varepsilon\alpha_{\ast} \left| r \right|$.
\end{corollary}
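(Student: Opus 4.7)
The plan is to observe that Corollary~\ref{cor:deriv_|r|_1} merely isolates, as a standalone pointwise statement, the differential inequality that already appears inside the proof of Proposition~\ref{prop:attr_triv_sol}. The chain of estimates derived there is obtained at each instant $t$ using only that the current value $v(t)$ lies in $\mathcal{D}_{s}$; no information about the history of the trajectory is used. Consequently the same estimate applies verbatim at every fixed $v\in\mathcal{D}_{s}$, and this is all the corollary asserts.

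Concretely, I would carry out three short steps. First, since each component $r_{j}(\cdot)$ satisfies a scalar linear homogeneous equation with continuous coefficient, positivity of the initial data propagates forward in time (as was already noted just before Proposition~\ref{prop:attr_triv_sol}), so $|r|$ can be read as the sum $r_{1}+\cdots+r_{n}$. Second, differentiating $|r|$ along~\eqref{eq:modsys_r} yields $2\varepsilon\bigl[\langle\alpha(v),r\rangle-\langle A(v)r,r\rangle\bigr]$. Third, the definition of $\alpha_{\ast}$ gives $\alpha_{j}(v)\le-\alpha_{\ast}$ for every $j$ and every $v\in\mathcal{D}_{s}$, so componentwise positivity of $r$ turns the linear term into at most $-\alpha_{\ast}|r|$; condition~\textbf{C4} ensures $\langle A(v)r,r\rangle\ge A_{\ast}\|r\|^{2}\ge 0$, so the quadratic term can be discarded favourably.

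There is no genuine obstacle. The only subtle point worth flagging is the implicit identification of $|r|$ with the $\ell^{1}$-sum $\sum_{j}r_{j}$, which rests on invariance of the positive orthant under~\eqref{eq:modsys_r} and is precisely what permits pulling $\alpha_{\ast}$ out as a common factor via $\sum_{j}\alpha_{j}(v)r_{j}\le-\alpha_{\ast}\sum_{j}r_{j}$. Once this is noted, the statement can equivalently be phrased as a direct reuse of the final inequality in the proof of Proposition~\ref{prop:attr_triv_sol}.
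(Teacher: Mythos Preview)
Your proposal is correct and matches the paper's treatment: the corollary is stated without a separate proof, as it simply extracts the pointwise differential inequality already displayed in the proof of Proposition~\ref{prop:attr_triv_sol}. Your identification of the key ingredients---positivity of the $r_{j}$, the componentwise bound $\alpha_{j}(v)\le-\alpha_{\ast}$ on $\mathcal{D}_{s}$, and the sign of the quadratic term via $A_{\ast}>0$---is exactly what underlies that displayed chain of inequalities.
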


\begin{proposition}\label{prop:attrfixedpoint}
The point $\left( r^{\ast},0 \right)$ is a global attractor of system~\eqref{eq:modsys_r}--\eqref{eq:modsys_v} in the domain $(0,\infty)^{n} \times B_{R^{\ast}}^{m}$.
\end{proposition}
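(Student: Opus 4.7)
The plan is to combine convergence of the $v$-subsystem, already delivered by Proposition~\ref{prop:arrt_v=00003D0}, with a classical Volterra-type Lyapunov function for the $r$-subsystem. As a preliminary observation, condition \textbf{C6} actually yields the sharper exponential bound $\|v(t)\| \le \|v(0)\|\mathrm{e}^{-\varepsilon\varkappa t}$, so in particular $v(\cdot)$ is square-integrable on $[0,\infty)$; this lets me treat the $r$-equation as a vanishing perturbation of the autonomous limit $\dot{r} = 2\varepsilon[\alpha(0) - A(0)r]\bullet r$.

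For the $r$-subsystem I would introduce the Volterra function
\[
V(r) := \sum_{i=1}^{n}\bigl[r_{i} - 1 - \ln r_{i}\bigr],
\]
which, thanks to the normalization $r^{\ast}=(1,\ldots,1)$, is nonnegative on $(0,\infty)^{n}$, vanishes only at $r^{\ast}$, and has compact sublevel sets in $(0,\infty)^{n}$ (it blows up both as some $r_{i}\to 0^{+}$ and as $r_{i}\to\infty$). A direct calculation along \eqref{eq:modsys_r}--\eqref{eq:modsys_v} gives $\dot V = 2\varepsilon\langle r - r^{\ast},\,\alpha(v) - A(v)r\rangle$. Writing $\alpha(v) - A(v)r = -A(v)(r - r^{\ast}) + \mu(v)$ with $\mu(v) := \alpha(v) - \alpha(0) + (A(0) - A(v))r^{\ast}$, using $\|\mu(v)\|\le M\|v\|$ on $B_{R^{\ast}}^{m}$ (from smoothness of $\alpha,A$), and invoking \textbf{C4} in the form $\langle A(v)\xi,\xi\rangle \ge A_{\ast}\|\xi\|^{2}$, a single application of Young's inequality should yield
\[
\dot V \le -\varepsilon A_{\ast}\|r - r^{\ast}\|^{2} + \frac{\varepsilon M^{2}}{A_{\ast}}\|v(t)\|^{2}.
\]

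Integrating this estimate and inserting the $L^{2}$-bound on $v(\cdot)$ will show that $V(r(t))$ stays uniformly bounded on $[0,\infty)$ and that $\|r(\cdot) - r^{\ast}\|^{2}$ is integrable on $[0,\infty)$. The main technical hook — and the only step with real content beyond bookkeeping — is that boundedness of $V(r(t))$ confines $r(t)$ to a compact subset of the open positive cone $(0,\infty)^{n}$; it is precisely the properness of the Volterra function near the faces $\{r_{i}=0\}$ that rules out the \emph{a priori} possibility of a component $r_{i}(t)$ drifting to $0$, and this is why the naive candidate $\|r-r^{\ast}\|^{2}$ would not suffice. On that compact set $\dot r$ is bounded, so $\|r(\cdot) - r^{\ast}\|^{2}$ is uniformly continuous in $t$, and Barbalat's lemma forces $r(t)\to r^{\ast}$. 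Combined with $v(t)\to 0$, this gives the claimed global attraction of $(r^{\ast},0)$ throughout $(0,\infty)^{n}\times B_{R^{\ast}}^{m}$.
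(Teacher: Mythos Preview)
Your proposal is correct, and it shares with the paper the key ingredient: the Volterra function $V_0(r)=\sum_i(r_i-1-\ln r_i)$. The routes diverge after that. You exploit the exponential decay $\|v(t)\|\le\|v(0)\|\mathrm{e}^{-\varepsilon\varkappa t}$ to treat $v$ as an $L^2$ forcing, bound $\dot V_0\le -\varepsilon A_\ast\|r-r^\ast\|^2+C\|v\|^2$, integrate, and conclude via Barbalat; properness of $V_0$ near the faces $\{r_i=0\}$ does the confinement automatically, and your decomposition $\alpha(v)-A(v)r=-A(v)(r-r^\ast)+\mu(v)$ with $\mu$ independent of $r$ sidesteps any need for an a~priori bound on $r$. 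The paper instead first traps $r(t)$ in an explicit compact set $\mathcal{K}\subset(0,\infty)^n$ by estimating $|r|$ from above and below directly, and then builds a \emph{combined} Lyapunov function $V(r,v)=V_0(r)+\lambda\|v\|^2/2$ whose derivative on $\{|r|\le\varrho\}\times B_{R^\ast}^m$ satisfies $\dot V\le-\varepsilon\mu(\|r-r^\ast\|^2+\|v\|^2)$; because the paper keeps the term $(A(0)-A(v))r$ rather than $(A(0)-A(v))r^\ast$, it genuinely needs the preliminary confinement step. Your argument is thus more self-contained for this proposition alone, but the paper's extra work is not wasted: the set $\mathcal{K}$, the combined function $V(r,v)$, and the estimate~\eqref{eq:bwp_est_V+v} are the main tools reused in Section~\ref{sec:pre_analysis} (Propositions~\ref{prop:glob_sol_nf} and~\ref{prop:moment_t_eps}) to control the full perturbed system~\eqref{eq:polnf_vec}.
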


\begin{proof}
It follows from the inequality
\begin{equation*}
	\frac{\mathrm{d} \left| r(t) \right|}{\mathrm{d}t} \le 2\varepsilon \left[ \left| \alpha(v(t)) \bullet r(t) \right| - \left\langle A(v(t))r(t),r(t) \right\rangle \right] \le 2\varepsilon \left\Vert r(t) \right\Vert \left[ \alpha^{\ast} - A_{\ast} \left\Vert r(t) \right\Vert \right]
\end{equation*}
that $\left| r(t )\right|$ is decreasing while $\left\Vert r(t) \right\Vert > \alpha^{\ast} / A_{\ast}$, and hence, it does so for at least as long as all points of the hyperplane $\left| r \right|=\left|r(t)\right|$ stay outside the sphere $\left\Vert r \right\Vert  = \alpha^{\ast}/A_{\ast}$. Or in other words, it is decreasing while the distance from the hyperplane $\left|r\right| = \left|r(t)\right|$ to the origin is greater than $\alpha^{\ast} / A_{\ast}$. Since this distance equals $\left|r(t)\right| / \sqrt{n}$, then no matter how small $\delta>0$ we choose, there will be a unique positive moment of time starting from which $r(t)$ belongs to the bounded set $\left\{  r \in (0,\infty)^{n} \colon \left|r\right| \le \sqrt{n} \left( \alpha^{\ast} + \delta \right) / A_{\ast} \right\}$.

Meanwhile, as soon as at some moment $t_{0} \ge 0$ the point $v(t)$ enters $\mathcal{D}_{u}$, the inequality
\begin{equation*}
	\frac{\mathrm{d}\left|r(t)\right|}{\mathrm{d}t} \ge 2\varepsilon \left[ \alpha_{\ast} \left| r(t) \right| - A^{\ast} \left\Vert r(t) \right\Vert ^{2} \right] \ge 2\varepsilon \left| r(t) \right| \left[\alpha_{\ast} - A^{\ast} \left| r(t) \right|\right].
\end{equation*}
will become valid. Consequently, starting from some moment of time $t_{1}\ge t_{0}$ the inequality $\left| r(t) \right| \ge \left( \alpha_{\ast} - \delta\right) / A^{\ast}$ holds.  Thus, if we declare
\begin{equation}\label{eq:compK}
	\mathcal{K} :=  \left\{  r \in \mathbb{R}_{+}^{n} \colon (\alpha_{\ast} - \delta) / A^{\ast} \le \left| r \right| \le \sqrt{n} \left( \alpha^{\ast} + \delta \right) / A_{\ast} \right\},
\end{equation}
there will be a moment of time $t_{\mathcal{K}} = t_{\mathcal{K}}(r(0))\ge t_{1}$ such that $r(t_{\mathcal{K}}) \in \mathcal{K}$, and then $r(t) \in \mathcal{K}$ for every $t\ge t_{\mathcal{K}}$. It is noticeable that by choosing the  number $\varepsilon_{0}$ small enough, without loss of generality, we can thereby suppose that $\varrho > \sqrt{n} \left( \alpha^{\ast} + \delta \right) / A_{\ast}$, and accordingly, $\left|r\right| < \varrho$ for all $r \in \mathcal{K}$.

Let us prove that $r(t) \to r^{\ast}$ when $t \to+\infty$. Consider the limit system
\begin{equation*}
	\dot{r} = 2\varepsilon[ \alpha(0) - A(0) r] \bullet r.
\end{equation*}
It has a positive definite Lyapunov function in $(0,\infty)^{n}$ (relative to the equilibrium $r^{\ast}$)
\begin{equation}\label{eq:bwp_V_0}
	V_{0}(r) := \sum_{i=1}^{n} \left(r_{i} - 1 - \ln r_{i} \right) \equiv \left| r \right| - \ln \prod_{i}^{n} r_{i} - n,
\end{equation}
with a negative definite derivative along the limit system. Indeed,
\begin{equation*}
\begin{gathered}
	\left\langle \nabla V_{0}(r), 2\varepsilon[ \alpha(0) - A(0)r] \bullet r\right\rangle  = 2\varepsilon \sum_{i=1}^{n} \left( \frac{r_{i}-1}{r_{i}} \right) \left[ \alpha(0) - A(0) r \right]_{i} r_{i} = \\
	= 2\varepsilon \left\langle r - r^{\ast}, \alpha(0) - A(0) r \right\rangle = - 2\varepsilon\left\langle r - r^{\ast}, A(0)\left[ r - r^{\ast} \right] \right\rangle \le - 2\varepsilon A_{\ast} \left\Vert r - r^{\ast} \right\Vert^{2}.
\end{gathered}
\end{equation*}

Now let us assign $q:=\sup_{0 < \left\Vert v \right\Vert \le R^{\ast}} \left\Vert v \right\Vert^{-1} \left[ \left\Vert \alpha(v) - \alpha(0) \right\Vert + \left\Vert A(v) - A(0) \right\Vert \varrho\right]$ and compute the derivative along trajectories of system~\eqref{eq:modsys_r}--\eqref{eq:modsys_v} of the function
\begin{equation}\label{eq:bwp_def_V}
	V(r,v) := V_{0}(r) + \lambda \left\Vert v \right\Vert^{2} / 2,
\end{equation}
where $\lambda > q^{2} / \left(2A_{\ast}\varkappa\right)$. Sylvester's criterion of positive definiteness of a quadratic form claims the existence of such a number $\mu>0$ that
\begin{gather}
	\left\langle \nabla V_{0}(r), 2\varepsilon [\alpha(v) - A(v)r] \bullet r \right\rangle + \varepsilon \lambda \left\langle c(v), v \right\rangle = -2\varepsilon \left\langle r - r^{\ast}, A(0) \left[ r - r^{\ast} \right] \right\rangle + \nonumber \\
	+2\varepsilon\left\langle r-r^{\ast},\alpha(v(t))-\alpha(0)+\left[A(0)-A(v(t))\right]r\right\rangle +\varepsilon\lambda\left\langle c(v),v\right\rangle \le \nonumber \\
	\le-\varepsilon\left[2A_{\ast}\left\Vert r-r^{\ast}\right\Vert ^{2}-2q\left\Vert r-r^{\ast}\right\Vert \left\Vert v\right\Vert +\lambda\varkappa\left\Vert v\right\Vert ^{2}\right]\le-\varepsilon\mu\left[\left\Vert r-r^{\ast}\right\Vert ^{2}+\left\Vert v\right\Vert ^{2}\right]\label{eq:bwp_est_V+v}
\end{gather}
for all $v \in B_{R^{\ast}}^{m}$ and $r \in(0,\infty)^{n}$ such that $\left|r\right| \le \varrho$. Since $(r(t),v(t)) \in \mathcal{K}\times\mathcal{D}_{u}$ for all sufficiently large $t$, then $V(r(t),v(t))\to0$ for $t \to+\infty$. But thus $r(t)\to r^{\ast}$ when $t\to+\infty$.
\end{proof}

\begin{corollary}\label{cor:deriv_|r|_2}
For all $v\in B_{R^{\ast}}^{m}$ and $r\in(0,\infty)^{n}$ such that $\left|r\right| > \sqrt{n}(\alpha^{\ast} + \delta)/A_{\ast}$ the derivative of function $\left|r\right|$ along trajectories of subsystem~\eqref{eq:modsys_r}does not exceed $-2\varepsilon\delta \left( \alpha^{\ast} + \delta\right) / A_{\ast}$. If, otherwise, $v \in\mathcal{D}_{u}$ and $0 < \left|r\right| < (\alpha_{\ast} - \delta)/A^{\ast}$, this derivative is greater than $2\varepsilon\delta \left|r\right|$. The set $\mathcal{K} \times \mathcal{D}_{u}$ is a forward invariant set of system~\eqref{eq:modsys_r}--\eqref{eq:modsys_v}. Moreover, each forward trajectory of this system such that $(r(0),v(0))\in(0,\infty)^{n} \times B_{R^{\ast}}^{m}$ enters $\mathcal{K}\times\mathcal{D}_{u}$.
\end{corollary}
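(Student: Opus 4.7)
The plan is to observe that all four assertions of the corollary follow quickly from the differential inequalities already derived in the proof of Proposition~\ref{prop:attrfixedpoint}, together with the contracting nature of the $v$-subsystem under condition~\textbf{C6}. No new Lyapunov function is needed; the work is essentially bookkeeping on the boundaries of the regions in play.

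First I would establish the two pointwise derivative estimates. For the upper one, I would start from
$$\frac{\mathrm{d}|r|}{\mathrm{d}t}\le 2\varepsilon\|r\|\bigl[\alpha^{\ast}-A_{\ast}\|r\|\bigr],$$
already obtained in the proof of Proposition~\ref{prop:attrfixedpoint}, and apply $|r|\le\sqrt{n}\,\|r\|$ (Cauchy--Schwarz, since every $r_{j}\ge0$). The hypothesis $|r|>\sqrt{n}(\alpha^{\ast}+\delta)/A_{\ast}$ then forces $\|r\|>(\alpha^{\ast}+\delta)/A_{\ast}$, whence $\alpha^{\ast}-A_{\ast}\|r\|<-\delta$ and the right-hand side is bounded from above by $-2\varepsilon\delta\|r\|<-2\varepsilon\delta(\alpha^{\ast}+\delta)/A_{\ast}$. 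For the lower estimate I would use the companion inequality
$$\frac{\mathrm{d}|r|}{\mathrm{d}t}\ge 2\varepsilon|r|\bigl[\alpha_{\ast}-A^{\ast}|r|\bigr]$$
established in the same proof for $v\in\mathcal{D}_{u}$ via condition~\textbf{C4}; the hypothesis $|r|<(\alpha_{\ast}-\delta)/A^{\ast}$ gives $\alpha_{\ast}-A^{\ast}|r|>\delta$ at once, so $\mathrm{d}|r|/\mathrm{d}t>2\varepsilon\delta|r|$.

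Next I would derive forward invariance of $\mathcal{K}\times\mathcal{D}_{u}$. Invariance of $\mathcal{D}_{u}=B_{R_{0}}^{m}$ under the $v$-equation is immediate from condition~\textbf{C6}, since
$$\frac{\mathrm{d}}{\mathrm{d}t}\,\frac{\|v\|^{2}}{2}=\varepsilon\langle c(v),v\rangle<-\varepsilon\varkappa\|v\|^{2},$$
so $\|v(t)\|$ strictly decreases and cannot leave any ball centred at the origin. Invariance of $\mathcal{K}$ under~\eqref{eq:modsys_r} for $v\in\mathcal{D}_{u}$ then follows from the two estimates above: on the outer sphere $|r|=\sqrt{n}(\alpha^{\ast}+\delta)/A_{\ast}$ the first estimate yields $\mathrm{d}|r|/\mathrm{d}t<0$, while on the inner sphere $|r|=(\alpha_{\ast}-\delta)/A^{\ast}$ the second yields $\mathrm{d}|r|/\mathrm{d}t>0$. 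Positivity of every component $r_{j}(t)$ is preserved because $r_{j}(\cdot)$ is a non-trivial solution of a linear homogeneous scalar equation with continuous coefficient, as already noted just before Proposition~\ref{prop:attr_triv_sol}.

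Finally, for the global-entry statement I would combine the above with the two preceding propositions. By Proposition~\ref{prop:arrt_v=00003D0} one has $v(t)\to 0$, so there is a finite time $t_{0}$ after which $v(t)\in\mathcal{D}_{u}$. From $t_{0}$ onwards the argument from the proof of Proposition~\ref{prop:attrfixedpoint} (construction of $t_{\mathcal{K}}\ge t_{1}$ such that $r(t)\in\mathcal{K}$ for all $t\ge t_{\mathcal{K}}$) is available, and together with the forward invariance just verified it shows that the trajectory both enters and remains in $\mathcal{K}\times\mathcal{D}_{u}$. There is no substantive obstacle here; the only point meriting care is that the lower estimate is applied solely in the regime $v\in\mathcal{D}_{u}$, which is precisely the situation that prevails once $v$ has entered $B_{R_{0}}^{m}$.
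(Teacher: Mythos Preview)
Your proposal is correct and follows essentially the same approach as the paper: the corollary is stated without a separate proof there, being an immediate consequence of the differential inequalities established in the proof of Proposition~\ref{prop:attrfixedpoint} together with Proposition~\ref{prop:arrt_v=00003D0}, and you have simply written out those details explicitly. The only minor point is that your boundary arguments for $\mathcal{K}$ use the pointwise estimates stated for strict inequalities on $|r|$, but the same computation with non-strict inequalities at $|r|=\sqrt{n}(\alpha^{\ast}+\delta)/A_{\ast}$ and $|r|=(\alpha_{\ast}-\delta)/A^{\ast}$ still yields a strictly signed derivative, so nothing is lost.
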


Let $J(r)$ denote the Jacobi matrix $\frac{\partial}{\partial r} \left[ \left( \alpha(0) - A(0) r \right) \bullet r \right]$ and
$H_{V_{0}}(r)$ be the Hesse matrix of the function $V_{0}(\cdot)$ at the point $r$. One can easily verify that the quadratic form $\left\langle H_{V_{0}}(r^{\ast})r,r\right\rangle $ is positive definite.

\begin{proposition}\label{prop:liearised_stab}
The linear system $\dot{r}=J(r^{\ast})r$ is asymptotically stable and the derivative of the quadratic form $\left\langle H_{V_{0}}(r^{\ast})r,r\right\rangle $ along trajectories of this system is negative definite.
\end{proposition}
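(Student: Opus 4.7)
The plan is to compute both objects explicitly at $r^{\ast} = (1,\ldots,1)$ and then read off everything from condition \textbf{C4}, which gives $A_{\ast} = \inf_{v \in B_{R^{\ast}}^{m}}\min_{\|\xi\|=1}\langle A(v)\xi,\xi\rangle > 0$.

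\emph{Step 1: Identify $J(r^{\ast})$.} Writing the $i$-th component of the vector field as $f_{i}(r) = \bigl[\alpha_{i}(0) - (A(0)r)_{i}\bigr] r_{i}$, I differentiate to get
\begin{equation*}
    \frac{\partial f_{i}}{\partial r_{j}}(r) = -A_{ij}(0)\,r_{i} + \bigl[\alpha_{i}(0) - (A(0)r)_{i}\bigr]\delta_{ij}.
\end{equation*}
Since $\alpha(0) = A(0)r^{\ast}$, the bracketed term vanishes at $r = r^{\ast}$, and since $r^{\ast}_{i} = 1$, the first term reduces to $-A_{ij}(0)$. Hence $J(r^{\ast}) = -A(0)$.

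\emph{Step 2: Identify $H_{V_{0}}(r^{\ast})$.} From $V_{0}(r) = \sum_{i}(r_{i}-1-\ln r_{i})$, the Hessian is the diagonal matrix $\mathrm{diag}(r_{1}^{-2},\ldots,r_{n}^{-2})$; at $r^{\ast}=(1,\ldots,1)$ this is the identity $E_{n}$. Therefore $\langle H_{V_{0}}(r^{\ast})r,r\rangle = \|r\|^{2}$, which is patently positive definite (as already asserted in the statement).

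\emph{Step 3: Compute the derivative along $\dot{r} = J(r^{\ast})r$ and conclude.} Using Steps 1--2,
\begin{equation*}
    \frac{\mathrm{d}}{\mathrm{d}t}\langle H_{V_{0}}(r^{\ast})r,r\rangle = 2\langle r, -A(0)r\rangle = -2\langle A(0)r,r\rangle \le -2A_{\ast}\|r\|^{2},
\end{equation*}
where the last inequality uses condition \textbf{C4} (which controls the symmetric part of $A(0)$, since $\langle A(0)r,r\rangle$ depends only on it). Because $A_{\ast}>0$, the derivative is negative definite. The linearized system $\dot{r}=J(r^{\ast})r$ is then asymptotically stable by the standard Lyapunov theorem with the quadratic Lyapunov function $\langle H_{V_{0}}(r^{\ast})r,r\rangle$.

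There is essentially no obstacle: the proposition is a direct linearization of the computation already carried out for the nonlinear limit system in the proof of Proposition~\ref{prop:attrfixedpoint}, with the normalization $r^{\ast}=(1,\ldots,1)$ making the Hessian trivial. The only point that deserves a line of comment is that $\langle A(0)r,r\rangle \ge A_{\ast}\|r\|^{2}$ holds even though $A(0)$ need not be symmetric, because the quadratic form depends only on the symmetric part $(A(0)+A(0)^{\top})/2$.
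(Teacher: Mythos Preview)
Your proof is correct. Both the explicit identification $J(r^{\ast})=-A(0)$ and $H_{V_{0}}(r^{\ast})=E_{n}$ under the normalization $r^{\ast}=(1,\ldots,1)$ are accurate, and the Lyapunov conclusion follows immediately from condition~\textbf{C4}.

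The paper takes a slightly different, more structural route: rather than computing $J(r^{\ast})$ and $H_{V_{0}}(r^{\ast})$ explicitly, it starts from the nonlinear identity
\[
\left\langle \nabla V_{0}(r),\,[\alpha(0)-A(0)r]\bullet r\right\rangle = -\left\langle r-r^{\ast},A(0)(r-r^{\ast})\right\rangle
\]
established in the proof of Proposition~\ref{prop:attrfixedpoint}, Taylor-expands the left side about $r^{\ast}$, and observes that because the right side is already an exact quadratic form, the quadratic part of the expansion must coincide with it. This yields $\langle H_{V_{0}}(r^{\ast})r,J(r^{\ast})r\rangle = -\langle A(0)r,r\rangle$ without ever writing down the matrices. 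The advantage of the paper's argument is that it is normalization-free: it would give the same identity even if $r^{\ast}$ had not been rescaled to $(1,\ldots,1)$. Your direct computation, by contrast, relies on that rescaling (both $J(r^{\ast})=-A(0)$ and $H_{V_{0}}(r^{\ast})=E_{n}$ fail otherwise) but is shorter and entirely transparent. Either approach is perfectly adequate here.
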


\begin{proof}
We have the inequality
\begin{equation*}
\begin{gathered}
	-\left\langle r - r^{\ast},A(0) \left[ r - r^{\ast} \right] \right\rangle = \left\langle \nabla V_{0}(r), [\alpha(0)-A(0)r] \bullet r \right\rangle =\\
	= \left\langle H_{V_{0}}(r^{\ast}) (r-r^{\ast}), J(r^{\ast})(r-r^{\ast} \right\rangle + O(\left\Vert r-r^{\ast}\right\Vert ^{3}).
\end{gathered}
\end{equation*}
Since the left-hand side is a quadratic form, the right-hand side should also be one. This concludes that
\begin{equation*}
	\left\langle H_{V_{0}}(r^{\ast})r,J(r^{\ast})r\right\rangle =-\left\langle A(0)r,r\right\rangle \quad\forall r\in\mathbb{R}^{n}.
\end{equation*}
\end{proof}

\section{The Main Theorem}\label{sec:main_theorem}

Everywhere in what follows, we assume that $\varrho > \max \left\{  \sqrt{n}\left( \alpha^{\ast} + \delta\right) / A_{\ast}, 1 \right\}$, and therefore, the set $\mathcal{K}$ given by formula~\eqref{eq:compK} is contained in the simplex
\begin{equation*}
	\mathcal{S}_{\varrho} := \left\{  r\in\mathbb{R}_{+}^{n} \colon \left|r\right|\le\varrho\right\} \subset\left[0,\varrho\right]^{n}.
\end{equation*}
Before we proceed to the formulation of the main theorem, let us point out an important property of a union of sub-level sets of the function $V_{0}(\cdot)$, defined by~\eqref{eq:bwp_V_0}:
\begin{equation*}
	\bigcup_{c>0}V_{0}^{-1}([0,c])=(0,\infty)^{n}.
\end{equation*}
This fact, particularly, is a consequence of the following simple lemma.

\begin{lemma}\label{lem: bwp_Q_eps}
For $k > 0$ and $c \ge 0$ consider the set
\begin{equation*}
	\mathcal{Q}_{\varepsilon}(k,c) :=  \left\{  r\in\mathbb{R}^{n} \colon r_{j} \ge \mathrm{e}^{-c} \varepsilon^{k}, \; j=1, \ldots, n \right\}.
\end{equation*}
If $\varepsilon_{0} \in (0,1)$ and $c \ge 1$, then
\begin{equation*}
	V_{0}^{-1} \left( \left[ 0, \left| \ln\varepsilon^{k} \right| + c - 1 \right] \right) := \left\{ r \in (0, \infty)^{n} \colon V_{0}(r) \le \left| \ln\varepsilon^{k} \right| + c - 1 \right\} \subset \mathcal{Q}_{\varepsilon}(k,c) \quad \forall \varepsilon \in (0, \varepsilon_{0}).
\end{equation*}
If additionally $\left|\ln\varepsilon_{0}^{k/n}\right|>\varrho-\ln\varrho$, then
\begin{equation*}
	\mathcal{Q}_{\varepsilon} \left( \frac{k}{n}, 0 \right) \cap \mathcal{S_{\varrho}} \subset V_{0}^{-1} \left( \left[ 0, \left| \ln\varepsilon^{k} \right| \right] \right) \quad \forall \varepsilon \in (0, \varepsilon_{0}).
\end{equation*}
\end{lemma}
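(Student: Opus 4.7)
The proof I propose has the following structure. Set $\phi(x) := x - 1 - \ln x$ so that $V_0(r) = \sum_{i=1}^n \phi(r_i)$. The function $\phi$ is nonnegative on $(0,\infty)$, strictly convex, and attains its unique minimum $\phi(1)=0$. Both inclusions will then reduce to elementary one-variable estimates on $\phi$.

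For the first inclusion I would argue coordinate by coordinate. If $r$ lies in $V_0^{-1}\bigl([0, |\ln\varepsilon^k|+c-1]\bigr)$, then since every summand $\phi(r_j)$ is nonnegative I have $\phi(r_j) \le V_0(r) \le |\ln\varepsilon^k|+c-1$. Unfolding this gives $-\ln r_j \le |\ln\varepsilon^k|+c-r_j$, and since $r_j > 0$ the right-hand side is strictly less than $|\ln\varepsilon^k|+c$. Exponentiating, and using $|\ln\varepsilon^k| = -\ln\varepsilon^k$ (valid for $\varepsilon \in (0,1)$), delivers $r_j \ge e^{-c}\varepsilon^k$, that is, $r \in \mathcal{Q}_\varepsilon(k,c)$.

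For the second inclusion I would exploit the convexity of $\phi$ on the interval $[\varepsilon^{k/n},\varrho]$. Let $r \in \mathcal{Q}_\varepsilon(k/n,0) \cap \mathcal{S}_\varrho$; then each coordinate satisfies $\varepsilon^{k/n} \le r_j \le \sum_{i=1}^n r_i \le \varrho$, and convexity gives $\phi(r_j) \le \max\bigl(\phi(\varepsilon^{k/n}), \phi(\varrho)\bigr)$. The additional hypothesis $|\ln\varepsilon_0^{k/n}| > \varrho - \ln\varrho$ implies, for every $\varepsilon \in (0,\varepsilon_0)$,
\[
\phi(\varepsilon^{k/n}) - \phi(\varrho) = \varepsilon^{k/n} + \bigl(|\ln\varepsilon^{k/n}| - (\varrho - \ln\varrho)\bigr) > \varepsilon^{k/n} > 0,
\]
so the maximum equals $\phi(\varepsilon^{k/n})$. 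Summing the pointwise inequality over $j$ produces
\[
V_0(r) \le n\phi(\varepsilon^{k/n}) = n\varepsilon^{k/n} - n + |\ln\varepsilon^k| < |\ln\varepsilon^k|,
\]
the final strict inequality holding because $n\varepsilon^{k/n} < n$ for $\varepsilon \in (0,1)$. Hence $r \in V_0^{-1}\bigl([0, |\ln\varepsilon^k|]\bigr)$, as required.

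I do not anticipate a serious obstacle. The only conceptual step is noticing that a finer analysis of the convex function $V_0$ on the polytope $\mathcal{Q}_\varepsilon(k/n,0) \cap \mathcal{S}_\varrho$ (for instance, vertex by vertex) is unnecessary: the crude convexity bound $\phi(r_j) \le \max(\phi(\varepsilon^{k/n}), \phi(\varrho))$ already suffices, because the hypothesis on $\varepsilon_0$ is tailored precisely so that $\phi(\varepsilon^{k/n})$ dominates $\phi(\varrho)$, after which $n\phi(\varepsilon^{k/n})$ lands just inside the required level set.
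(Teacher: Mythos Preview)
Your proof is correct. The paper does not actually supply a proof of this lemma --- it is introduced as a ``simple lemma'' and stated without argument --- so there is nothing to compare against; your coordinate-wise use of the nonnegativity and convexity of $\phi(x)=x-1-\ln x$ is exactly the intended elementary verification.
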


\begin{corollary}
The estimate $\mathrm{mes} \left( \mathcal{S}_{\varrho} \setminus V_{0}^{-1} \left( \left[ 0, \left| \ln\varepsilon^{k} \right|\right] \right) \right) = O(\varepsilon^{k/n})$ holds when $\varepsilon \to+0$.
\end{corollary}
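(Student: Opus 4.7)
The plan is to deduce this immediately from the second containment in Lemma~\ref{lem: bwp_Q_eps} by passing to complements inside $\mathcal{S}_\varrho$ and then estimating each resulting thin slab by its thickness.

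First, I would invoke the second part of the lemma, which asserts
\begin{equation*}
    \mathcal{Q}_{\varepsilon}\!\left(\tfrac{k}{n},0\right) \cap \mathcal{S}_{\varrho} \subset V_{0}^{-1}\!\left(\left[0,\,|\ln\varepsilon^{k}|\right]\right)
\end{equation*}
for all sufficiently small $\varepsilon>0$. Taking complements within $\mathcal{S}_\varrho$ gives
\begin{equation*}
    \mathcal{S}_{\varrho} \setminus V_{0}^{-1}\!\left(\left[0,\,|\ln\varepsilon^{k}|\right]\right) \subset \mathcal{S}_{\varrho} \setminus \mathcal{Q}_{\varepsilon}\!\left(\tfrac{k}{n},0\right) = \bigcup_{j=1}^{n}\left\{ r \in \mathcal{S}_\varrho \colon 0 \le r_j < \varepsilon^{k/n} \right\}.
\end{equation*}

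Next I would bound the measure by a union bound. For each fixed $j$, the set $\{r\in\mathcal{S}_\varrho : 0 \le r_j < \varepsilon^{k/n}\}$ is a slab of thickness $\varepsilon^{k/n}$ in the $j$-th coordinate, intersected with the simplex $\mathcal{S}_\varrho$. By Fubini's theorem its $n$-dimensional Lebesgue measure is at most
\begin{equation*}
    \varepsilon^{k/n} \cdot \mathrm{vol}_{n-1}\!\left(\{r \in \mathcal{S}_\varrho : r_j = 0\}\right) \le \varepsilon^{k/n} \cdot \frac{\varrho^{n-1}}{(n-1)!},
\end{equation*}
since the slice is an $(n-1)$-dimensional simplex of edge length $\varrho$.

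Summing over $j=1,\ldots,n$ yields
\begin{equation*}
    \mathrm{mes}\!\left(\mathcal{S}_{\varrho} \setminus V_{0}^{-1}\!\left(\left[0,\,|\ln\varepsilon^{k}|\right]\right)\right) \le \frac{n\,\varrho^{n-1}}{(n-1)!}\,\varepsilon^{k/n} = O(\varepsilon^{k/n})
\end{equation*}
as $\varepsilon\to+0$, which is the desired estimate. There is no substantial obstacle here: the lemma already encapsulates the geometric content, and the remainder is an elementary slab-volume calculation. The only minor point to watch is that the hypothesis $|\ln\varepsilon_0^{k/n}| > \varrho - \ln\varrho$ of the lemma is satisfied for all sufficiently small $\varepsilon$, which is implicit in the asymptotic statement $\varepsilon\to+0$.
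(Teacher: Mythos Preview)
Your proof is correct and follows precisely the approach implicit in the paper: the corollary is stated there without proof, as an immediate consequence of the second inclusion in Lemma~\ref{lem: bwp_Q_eps}, and your argument supplies exactly the natural details---pass to complements in $\mathcal{S}_\varrho$ and bound the union of $n$ thin coordinate slabs. One minor refinement: the Fubini step works because the cross-section of $\mathcal{S}_\varrho$ at height $r_j \in [0,\varepsilon^{k/n}]$ has volume $(\varrho - r_j)^{n-1}/(n-1)! \le \varrho^{n-1}/(n-1)!$, so the slab volume is bounded by the thickness times the maximal slice, as you claim.
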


Now we can proceed to the statement of our main result.

\begin{theorem}
Suppose that conditions~\textbf{C4}--\textbf{C7} are fulfilled and $0 < k < N - 2$. Then there exists such $\varepsilon_{0} > 0$ that: 1)~for every $\varepsilon \in (0, \varepsilon_{0})$ the solution $(r(t),v(t),\varphi(t))$ of system~\eqref{eq:polnf_vec} with the initial condition $(r(0), v(0), \varphi(0)) \in \mathcal{S}_{\varrho} \times \mathcal{D}_{s} \times \mathbb{T}^{n}$  can be extended to the semi-axis $[0, \infty)$, it satisfies the inequality $\left| r(t) \right| \le \left| r(0) \right| \mathrm{e}^{-\varepsilon\alpha_{\ast} t}$ on the interval $[0, T_{1}(\varepsilon)) :=  \left\{  t \ge 0 \colon v(t) \in \mathcal{D}_{s}\right\}$, and there exists such an instant $T_{2}(\varepsilon) > T_{1}(\varepsilon)$ that $r(t) \in \mathcal{K}$, $v(t) \in \mathcal{D}_{u}$ for $t \ge T_{2}(\varepsilon)$, where sets $\mathcal{K}$, $\mathcal{D}_{s}$ and $\mathcal{D}_{u}$ are defined by~\eqref{eq:compK},~\eqref{eq:bwp_D_sD_u}; 2)~system~\eqref{eq:polnf_vec} has an $n$-dimensional invariant torus $\mathcal{T}_{\varepsilon}$ located in an $O(\varepsilon)$-neighborhood of the torus $\left\{ r^{\ast} \right\} \times \left\{ 0\right\} \times \mathbb{T}^{n}$, and the system's restriction to $\mathcal{T}_{\varepsilon}$ has the form $\dot{\varphi} = \omega(0) + \varepsilon \mathfrak{f}(\varphi, \varepsilon)$ where $\mathfrak{f}(\cdot,\varepsilon) \colon \mathbb{T}^{n} \to \mathbb{R}^{n}$ is a Lipschitz vector field; 3)~if in addition $r(0)$ satisfies $V_{0}(r(0))\le\left|\ln\varepsilon^{k}\right|$, then there is a trajectory $\left\{ \left(\tilde{r}(t), \tilde{v}(t), \tilde{\varphi}(t) \right)\right\}_{t \in \mathbb{R}}$ on the torus $\mathcal{T}_{\varepsilon}$ such that
\begin{equation*}
	\lim_{t \to + \infty} \left[ \left| r(t) - \tilde{r}(t) \right| + \left| v(t) - \tilde{v}(t) \right| + \left| \varphi(t) - \tilde{\varphi}(t) \right| \right] = 0.
\end{equation*}
The statement remains valid for arbitrary $r(0) \in \mathcal{S}_{\varrho}$ if the additional conditions hold:
\begin{equation}\label{eq:bwp_R_j=00003D0}
	R_{j}(r,v,\varphi,\varepsilon) \bigl|_{r_{j}=0} = 0 \quad \forall (r, v, \varphi, \varepsilon) \in \mathcal{S}_{\varrho} \times B_{R^{\ast}}^{m} \times \mathbb{T}^{n} \times (0, \varepsilon_{0}), \; j=1, \ldots, n.
\end{equation}
\end{theorem}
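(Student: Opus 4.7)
The proof naturally splits along the three assertions of the theorem, and all three parts are perturbative refinements of the first-approximation analysis of Section~\ref{sec:first_approx}. The plan is: (i)~control the passage of the fast--slow trajectory through $\mathcal{D}_{s}$, $\mathcal{D}_{\ast}$, $\mathcal{D}_{u}$ by redoing the Lyapunov computations of Propositions~\ref{prop:attr_triv_sol}--\ref{prop:attrfixedpoint} with the $O(\varepsilon^{N/2})$ remainders of~\eqref{eq:polnf_vec} retained; (ii)~produce the torus~$\mathcal{T}_{\varepsilon}$ in the recurrent region by invoking the invariant-torus theorem of~\cite{Hale61,SamMit76} near $\{r^{\ast}\}\times\{0\}\times\mathbb{T}^{n}$; (iii)~estimate its basin of attraction with the help of the sub-level-set Lemma~\ref{lem: bwp_Q_eps}.

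\textbf{Passage through the zones.} The remainders in~\eqref{eq:polnf_vec} are of sizes $\varepsilon^{N/2}\sqrt{r}$, $\varepsilon^{(N+3)/2}$, and $\varepsilon^{N/2}r^{-1/2}$ in the $r$-, $v$- and $\varphi$-equations respectively; combined with the order relation $\|R\|=O(\|\sqrt{r}\|)$ stated after~\eqref{eq:polnf_vec}, this means that $\sqrt{r_{j}}R_{j}$ is actually $O(r_{j})$ on~$\mathcal{S}_{\varrho}$. While $v(t)\in\mathcal{D}_{s}$ the computation from the proof of Proposition~\ref{prop:attr_triv_sol} therefore yields
\begin{equation*}
	\frac{\mathrm{d}|r(t)|}{\mathrm{d}t}\le -2\varepsilon\alpha_{\ast}|r(t)|+C\varepsilon^{N/2}|r(t)|\le -\varepsilon\alpha_{\ast}|r(t)|
\end{equation*}
for $\varepsilon$ small enough, which gives the exponential bound in~(1). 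Perturbed versions of Corollaries~\ref{cor:deriv_|r|_1}--\ref{cor:deriv_|r|_2} and of Proposition~\ref{prop:attrfixedpoint} then drive the trajectory into $\mathcal{K}\times\mathcal{D}_{u}$ after a finite time $T_{2}(\varepsilon)$ and keep it there, and global extendability follows from the a~priori bound $(r(t),v(t))\in\mathcal{S}_{\varrho}\times B_{R^{\ast}}^{m}$.

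\textbf{Invariant torus.} Once the trajectory has settled in $\mathcal{K}\times\mathcal{D}_{u}$, system~\eqref{eq:polnf_vec} is a small perturbation of the limit system
\begin{equation*}
	\dot r=2\varepsilon[\alpha(0)-A(0)r]\bullet r,\qquad \dot v=\varepsilon c(v),\qquad \dot\varphi=\omega(0),
\end{equation*}
which admits $\{r^{\ast}\}\times\{0\}\times\mathbb{T}^{n}$ as an exponentially attracting invariant torus: Proposition~\ref{prop:liearised_stab} and condition~\textbf{C6} together imply that the $(r,v)$-linearisation at $(r^{\ast},0)$ has only eigenvalues with negative real parts of order $\varepsilon$, while the $\varphi$-flow is a pure translation on $\mathbb{T}^{n}$. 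Since the perturbation is of size $O(\varepsilon)$ in Lipschitz norm on this set, the classical invariant-manifold theorems of~\cite{Hale61,SamMit76} (to be applied in the auxiliary Section~\ref{sec:existinvtor}) yield a Lipschitz torus $\mathcal{T}_{\varepsilon}$ at distance $O(\varepsilon)$ from the unperturbed one, and the restricted field automatically takes the form $\dot\varphi=\omega(0)+\varepsilon\mathfrak{f}(\varphi,\varepsilon)$.

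\textbf{Non-local attraction and main obstacle.} The most delicate step is the estimation of the basin of~$\mathcal{T}_{\varepsilon}$. I would plug the Lyapunov function~\eqref{eq:bwp_def_V} into the full system~\eqref{eq:polnf_vec}. On the sub-level set $\{V_{0}(r)\le|\ln\varepsilon^{k}|\}$, Lemma~\ref{lem: bwp_Q_eps} provides the crucial lower bound $r_{j}\ge\mathrm{e}^{-1}\varepsilon^{k/n}$, which in turn controls both the singular weight $r_{j}^{-1/2}$ that appears in the $\varphi$-equation and the factor $\|\nabla V_{0}(r)\|$ arising from the $r$-equation. A careful bookkeeping of powers of $\varepsilon$ shows that precisely when $k<N-2$ the net perturbation of the estimate~\eqref{eq:bwp_est_V+v} stays strictly below the dominant term $-\varepsilon\mu[\|r-r^{\ast}\|^{2}+\|v\|^{2}]$; this pairing of the Lyapunov estimate with Lemma~\ref{lem: bwp_Q_eps} is the hardest ingredient of the whole argument and is where the hypothesis on $k$ enters. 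The trajectory is thereby captured in an $O(\varepsilon)$-neighbourhood of $\mathcal{T}_{\varepsilon}$, and a standard asymptotic-phase argument for normally hyperbolic invariant manifolds --- transverse exponential contraction via Proposition~\ref{prop:liearised_stab} combined with a Gronwall estimate for $\dot\varphi-\dot{\tilde\varphi}$ --- produces the shadowing trajectory $(\tilde r,\tilde v,\tilde\varphi)\subset\mathcal{T}_{\varepsilon}$. Under condition~\eqref{eq:bwp_R_j=00003D0} each face $\{r_{j}=0\}$ is forward invariant, so downward induction on the number of vanishing coordinates of $r(0)$ removes the sub-level-set restriction and extends the conclusion to arbitrary $r(0)\in\mathcal{S}_{\varrho}$.
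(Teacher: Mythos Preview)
Your overall strategy matches the paper's closely: Propositions~\ref{prop:glob_sol_nf} and~\ref{prop:moment_t_eps} handle the passage through the zones and the Lyapunov capture, Proposition~\ref{prop:lipinvtor} produces the torus via~\cite{Hale61}, and Propositions~\ref{prop:def_theta} and the last proposition of Section~\ref{sec:existinvtor} assemble the basin of attraction. Two points, however, need correction.

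\textbf{The asymptotic phase is not ``standard'' here.} Your plan to obtain $(\tilde r,\tilde v,\tilde\varphi)$ by ``transverse exponential contraction combined with a Gronwall estimate for $\dot\varphi-\dot{\tilde\varphi}$'' glosses over the main difficulty. The transverse contraction rate is only $O(\varepsilon)$, while $\dot\varphi$ depends on $v$ through $\omega(v)$ with an $O(1)$ Lipschitz constant; thus a naive Gronwall bound gives a phase discrepancy of order $\int_{0}^{\infty}\|v-\tilde v\|\,\mathrm{d}t=O(1/\varepsilon)$, not~$0$. The paper resolves this in Proposition~\ref{prop:def_theta} by a genuine fixed-point construction in a weighted trajectory space: given $(y,z)\in\mathcal{B}_{\varepsilon}$ and $\varphi$, it finds the unique $\theta(y,z,\varphi)$ for which both $\eta_t$ and $\phi_t$ converge exponentially. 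This is then chained (last proposition of Section~\ref{sec:existinvtor}) along a finite sequence $y_0,\ldots,y_I$ stepping from $B^{n+m}_{\sigma}(y^{\ast})$ down to the $\varepsilon(1-\varrho(\varepsilon))$-neighbourhood of $y_{\ast}(\varepsilon)$. Your sketch does not supply a mechanism for selecting the correct initial phase on~$\mathcal{T}_{\varepsilon}$.

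\textbf{The extension under~\eqref{eq:bwp_R_j=00003D0} is misread.} Your ``downward induction on the number of vanishing coordinates'' cannot work: if some $r_j(0)=0$ then, under~\eqref{eq:bwp_R_j=00003D0}, the face $\{r_j=0\}$ is forward invariant and the trajectory can never reach a torus with $r_j^{\ast}=1$. The actual role of~\eqref{eq:bwp_R_j=00003D0} (see the last paragraph of the proof of Proposition~\ref{prop:moment_t_eps}) is algebraic: it forces $R_j=\sqrt{r_j}\,\tilde R_j$, so the $r$-equation becomes $\dot r=2\varepsilon[\alpha-A r+\varepsilon B+\varepsilon^{N/2-1}\tilde R]\bullet r$ and the singular factor $r_j^{-1/2}$ disappears from the Lyapunov derivative. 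The estimate~\eqref{eq:bwp_est_V+v} then goes through for every $r(0)\in\mathcal{S}_{\varrho}\cap(0,\infty)^{n}$ without invoking the sub-level set $V_0^{-1}([0,|\ln\varepsilon^{k}|])$ at all. (Incidentally, the lower bound you quote from Lemma~\ref{lem: bwp_Q_eps} should be $r_j\ge\mathrm{e}^{-c}\varepsilon^{k}$, not $\varepsilon^{k/n}$; the exponent $k/n$ appears only in the reverse inclusion.)
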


The proof of this theorem is drawn from the statements of Sections~\ref{sec:pre_analysis} and~\ref{sec:existinvtor}.

\section{Preliminary Analysis of the Normalized System}\label{sec:pre_analysis}

In what follows assume that conditions of the main theorem are met. The following proposition captures a series of similarities between the dynamic of the first approximation system and the behavior of system~\eqref{eq:polnf_vec}.

\begin{proposition}\label{prop:glob_sol_nf}
Let $(r(t),v(t),\varphi(t))$, $t\in I$, be an non-extendible solution of system~\eqref{eq:polnf_vec} such that
$r(0) \in \mathcal{S}_{\varrho}$, $v(0) \in B_{R^{\ast}}^{m}$. Then for sufficiently small $\varepsilon_{0} > 0$ and every $\varepsilon \in (0, \varepsilon_{0})$ this solution has the following properties. 1) The interval $I$ contains the positive semi-axis, and hence, $\mathcal{S}_{\varrho} \times B_{R^{\ast}}^{m} \times \mathbb{T}^{n}$ is a forward invariant set of system~\eqref{eq:polnf_vec}. 2) There exists a moment of time $\tau_{\varepsilon} \ge 0$ after which $v(t)$ does not leave some $O(\varepsilon)$-neighborhood of the origin of $\mathbb{R}^{m}$. Moreover, on the interval $[0, \tau_{\varepsilon}]$ the function $\left\Vert v(t) \right\Vert $ is monotonously decreasing. 3) While  $v(t) \in \mathcal{D}_{s}$, the function $\left| r(t) \right|$ decreases with an exponential rate and it satisfies inequality $\left| r(t) \right| \le \left| r(0) \right| \mathrm{e}^{-\varepsilon\alpha_{\ast}t}$. 4) The set $\mathcal{K} \times \mathcal{D}_{u} \times \mathbb{T}^{n}$ is a forward invariant set of system~\eqref{eq:polnf_vec}, and there is a non-negative moment of time, since which $(r(t), v(t), \varphi(t)) \in \mathcal{K} \times \mathcal{D}_{u} \times \mathbb{T}^{n}$.
\end{proposition}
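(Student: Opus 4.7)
The plan is to treat system \eqref{eq:polnf_vec} as an $O(\varepsilon^{2})+O(\varepsilon^{N/2})$-perturbation of the first-approximation system \eqref{eq:modsys_r}--\eqref{eq:modsys_v}; the $\dot\varphi$-equation is decoupled from the $(r,v)$-dynamics and plays no role for the present statement. Each of the four properties will be obtained by reproducing the corresponding differential-inequality argument from Section~\ref{sec:first_approx} and absorbing the extra terms into the leading part by choosing $\varepsilon_{0}$ small enough. Throughout I will use the pointwise bound $\|R\|=O(\|\sqrt r\|)$, which together with Cauchy--Schwarz gives $\|\sqrt r\bullet R\|=O(|r|)$ on $\mathcal{S}_{\varrho}$.

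For Part~2, I will differentiate $\|v(t)\|^{2}$ along \eqref{eq:polnf_vec}. Condition \textbf{C6} combined with boundedness of $W$ and $Z$ on $\mathcal{S}_{\varrho}\times B_{R^{\ast}}^{m}\times\mathbb{T}^{n}\times[0,\varepsilon_{0}]$ yields
\begin{equation*}
\tfrac{1}{2}\frac{d}{dt}\|v\|^{2}\le-\varepsilon\varkappa\|v\|^{2}+C\varepsilon^{2}\|v\|,
\end{equation*}
so $\|v(t)\|$ decreases monotonically until it enters the ball $\{\|v\|\le K\varepsilon\}$ for any fixed $K>C/\varkappa$, and never leaves it afterwards. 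Defining $\tau_{\varepsilon}$ as the first entry time gives the claim. The same inequality shows the derivative is strictly negative on $\|v\|=R^{\ast}$ for small $\varepsilon$, which proves forward invariance of $B_{R^{\ast}}^{m}$ in $v$---half of Part~1.

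For Part~3, I will sum the $\dot r_{j}$-equations to obtain
\begin{equation*}
\frac{d|r|}{dt}=2\varepsilon\sum_{j}\alpha_{j}(v)r_{j}-2\varepsilon\langle A(v)r,r\rangle+O(\varepsilon^{2}|r|)+O(\varepsilon^{N/2}|r|).
\end{equation*}
On $\mathcal{D}_{s}$ Condition \textbf{C4} provides the leading bound $-2\varepsilon\alpha_{\ast}|r|$, which dominates the corrections for small $\varepsilon$, so $\frac{d|r|}{dt}\le-\varepsilon\alpha_{\ast}|r|$. The same estimate on $|r|=\varrho$, combined with Corollary~\ref{cor:deriv_|r|_2}, yields $\frac{d|r|}{dt}<0$ there, so $\mathcal{S}_{\varrho}$ is forward invariant in $r$; together with boundedness of the right-hand side on the trapping set this completes Part~1. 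Part~4 then follows from the perturbed version of Corollary~\ref{cor:deriv_|r|_2}: once $t\ge\tau_{\varepsilon}$ one has $v(t)\in B_{K\varepsilon}^{m}\subset\mathcal{D}_{u}$, and the corrected inequality gives $\frac{d|r|}{dt}>\varepsilon\delta|r|$ while $0<|r|<(\alpha_{\ast}-\delta)/A^{\ast}$, so $r(t)$ enters $\mathcal{K}$ in finite time and cannot leave.

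The main technical obstacle I anticipate is the $\sqrt{r_{j}}$-factor in the remainder $R$: the right-hand side of \eqref{eq:polnf_vec} is not Lipschitz along the coordinate axes $r_{j}=0$, initial-value problems there need not have unique solutions, and the Lyapunov function $V_{0}$ used in Proposition~\ref{prop:attrfixedpoint} is singular at those axes. I will circumvent this by noting that every inequality above relies only on the pointwise bound $\|R\|=O(\|\sqrt r\|)$, so it holds up to the boundary of $\mathcal{S}_{\varrho}$, and that the identity $\dot r_{j}|_{r_{j}=0}=0$ keeps the flow inside $\mathbb{R}_{+}^{n}$; any admissible Carath\'eodory continuation of the solution then satisfies all four claims.
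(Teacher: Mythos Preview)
Your proposal is correct and follows essentially the same route as the paper. The paper's own proof is a two-sentence sketch: it instructs the reader to compute the derivatives of $|r|$ and $\langle v,v\rangle$ along trajectories of \eqref{eq:polnf_vec}, observe (via Corollaries~\ref{cor:deriv_|r|_1} and~\ref{cor:deriv_|r|_2}) that for small $\varepsilon_{0}$ these derivatives retain the signs they had for the first-approximation system, and then repeat the arguments of Propositions~\ref{prop:arrt_v=00003D0}--\ref{prop:attrfixedpoint}; you have simply written out those computations explicitly, and your discussion of the $\sqrt{r_{j}}$ non-Lipschitz issue is more careful than anything the paper says at this point.
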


\begin{proof}
By computing and estimating using Corollaries~\ref{cor:deriv_|r|_1},~\ref{cor:deriv_|r|_2} the derivatives of the functions $\left| r \right|$ and $\left\langle v, v \right\rangle$ along trajectories of the corresponding subsystems of system~\eqref{eq:polnf_vec}, it is easy to verify that under the condition that $\varepsilon_{0}$ is sufficiently small in the corresponding domains these derivatives have the same signs as the derivatives of the functions $\left| r \right|$ and $\left\langle v, v \right\rangle$ along trajectories of system~\eqref{eq:modsys_r}--\eqref{eq:modsys_v}. In the same fashion as in proofs of Propositions~\ref{prop:arrt_v=00003D0},~\ref{prop:attr_triv_sol},~\ref{prop:attrfixedpoint}, we obtain the desired result.
\end{proof}

The presence of the term $\varepsilon^{N/2} \sqrt{r} \bullet R(r,v,\varphi,\varepsilon)$ in system~\eqref{eq:polnf_vec} makes it harder to establish a counterpart for Proposition~\ref{prop:attrfixedpoint}. The next proposition provides restrictions on initial values of system~\eqref{eq:polnf_vec} solution which guarantee that, starting from some moment of time, this solution enters and remains inside an $O(\sqrt{\varepsilon})$-neighborhood of the torus defined in the phase space by equations  $r=r^{\ast}$, $v=0$.

\begin{proposition}\label{prop:moment_t_eps}
There exist such positive numbers $\varepsilon_{0}$ and $C^{\ast}$ that for all $\varepsilon\in(0,\varepsilon_{0})$ and any solution $(r(t),v(t),\varphi(t))$ of system~\eqref{eq:polnf_vec} with the initial values $r(0) \in \mathcal{S}_{\varrho} \cap V_{0}^{-1} \left( [0,\left| \ln\varepsilon^{k} \right| ] \right)$ and $v(0) \in B_{R^{\ast}}^{m}$ there is a moment $t_{\varepsilon} > \tau_{\varepsilon}$ such that
\begin{equation}\label{eq:r-r*<}
	\left\Vert r(t) - r^{\ast} \right\Vert < C^{\ast} \sqrt{\varepsilon}, \quad \left\Vert v(t) \right\Vert  < C^{\ast} \varepsilon \quad \forall t > t_{\varepsilon}.
\end{equation}
If additionally conditions~\eqref{eq:bwp_R_j=00003D0} are met, then the existence of $t_{\varepsilon}$ is guaranteed for any solution of system~\eqref{eq:polnf_vec} such that $\left| r(0) \right| < \varrho$, $r_{j}(0) > 0$ $(j = 1, \ldots, n)$, $v(0) \in B_{R^{\ast}}^{m}$.
\end{proposition}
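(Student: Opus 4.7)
The strategy is to use the Lyapunov function $V(r,v)=V_{0}(r)+\lambda\|v\|^{2}/2$ defined in~\eqref{eq:bwp_def_V} and to bootstrap the first-approximation estimate~\eqref{eq:bwp_est_V+v} up to the full system~\eqref{eq:polnf_vec}. Proposition~\ref{prop:glob_sol_nf} already handles the $v$-component: after the moment $\tau_{\varepsilon}$ one has $\|v(t)\|=O(\varepsilon)$ and $(r(t),v(t))\in\mathcal{K}\times\mathcal{D}_{u}$, so the task reduces to driving $r$ into an $O(\sqrt{\varepsilon})$-neighborhood of $r^{\ast}$. Along~\eqref{eq:polnf_vec} the derivative $\dot V$ consists of the good term $-\varepsilon\mu[\|r-r^{\ast}\|^{2}+\|v\|^{2}]$ from~\eqref{eq:bwp_est_V+v} plus the corrections $\langle\nabla V_{0}(r),\,2\varepsilon^{2}B\bullet r+\varepsilon^{N/2}\sqrt{r}\bullet R\rangle$ and $\lambda\langle v,\,\varepsilon^{2}W+\varepsilon^{(N+3)/2}Z\rangle$. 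Since $(\nabla V_{0}(r))_{i}=1-1/r_{i}$, the identity $\sum_{i}(1-1/r_{i})r_{i}B_{i}=\sum_{i}(r_{i}-1)B_{i}$ makes the $\varepsilon^{2}B\bullet r$ piece uniformly $O(\varepsilon^{2})$ on $\mathcal{S}_{\varrho}$, and the two $v$-corrections are likewise $O(\varepsilon^{2})$ once $\|v\|\le R^{\ast}$. The only dangerous piece is $-\varepsilon^{N/2}\sum_{i}R_{i}/\sqrt{r_{i}}$, whose magnitude reaches $\varepsilon^{(N-k)/2}$ when some $r_{i}$ drops to the floor $\varepsilon^{k}$ allowed by Lemma~\ref{lem: bwp_Q_eps}.

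To control this piece I run a bootstrap on the set $\Omega_{c}:=\{r\in\mathcal{S}_{\varrho}\colon V_{0}(r)\le|\ln\varepsilon^{k}|+c-1\}$ for a fixed $c\ge 1$; by Lemma~\ref{lem: bwp_Q_eps} this set lies in $\mathcal{Q}_{\varepsilon}(k,c)$, so $r_{j}\ge e^{-c}\varepsilon^{k}$ throughout, and the hypothesis $V_{0}(r(0))\le|\ln\varepsilon^{k}|$ places the initial point strictly inside $\Omega_{c}$ with $c=1$. On the boundary $\partial\Omega_{c}$ the equality $V_{0}(r)=|\ln\varepsilon^{k}|+c-1$ together with the boundedness of $|r|$ forces $-\ln\prod r_{j}$ to be large, so at least one coordinate $r_{j}$ is of order $\varepsilon^{k}$, whence $\|r-r^{\ast}\|$ is bounded below by a positive constant independent of $\varepsilon$. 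Consequently the good term $-\varepsilon\mu\|r-r^{\ast}\|^{2}$ dominates the perturbation $O(\varepsilon^{(N-k)/2})=o(\varepsilon)$, the inequality being strict precisely because $k<N-2$. Thus $\dot V<0$ on $\partial\Omega_{c}$, the set $\Omega_{c}$ is forward invariant, the lower bound $r_{j}(t)\ge e^{-c}\varepsilon^{k}$ persists, and the refined estimate $\dot V\le-\varepsilon\mu[\|r-r^{\ast}\|^{2}+\|v\|^{2}]+C\varepsilon^{1+\sigma}$ with $\sigma=(N-k)/2-1>0$ holds for all $t>\tau_{\varepsilon}$, driving $(r(t),v(t))$ into any preassigned fixed-radius neighborhood of $(r^{\ast},0)$ in finite time.

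Once $r$ sits in a fixed compact neighborhood of $r^{\ast}$ on which $r_{j}$ is bounded below by a positive constant independent of $\varepsilon$, the exact identity $\sum_{i}(1-1/r_{i})(\alpha(0)-A(0)r)_{i}r_{i}=-\langle r-r^{\ast},A(0)(r-r^{\ast})\rangle$ together with the smoothness bounds $\alpha(v)-\alpha(0)=O(\|v\|)$, $A(v)-A(0)=O(\|v\|)$ and the gradient estimate $\nabla V_{0}(r)=O(\|r-r^{\ast}\|)$ yields $\dot V_{0}\le-2\varepsilon A_{\ast}\|r-r^{\ast}\|^{2}+O(\varepsilon^{2}\|r-r^{\ast}\|)+O(\varepsilon^{N/2}\|r-r^{\ast}\|)$. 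Two applications of Young's inequality absorb the linear-in-$\|r-r^{\ast}\|$ pieces into the quadratic term and leave $\dot V_{0}\le-\varepsilon A_{\ast}\|r-r^{\ast}\|^{2}+O(\varepsilon^{\min\{3,N-1\}})$; combined with the local equivalence $V_{0}(r)\asymp\|r-r^{\ast}\|^{2}$ supplied by Proposition~\ref{prop:liearised_stab}, this delivers $\|r(t)-r^{\ast}\|\le C^{\ast}\sqrt{\varepsilon}$ for $t$ past some instant $t_{\varepsilon}$, while $\|v(t)\|\le C^{\ast}\varepsilon$ is already in hand from Proposition~\ref{prop:glob_sol_nf}. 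Under the additional hypothesis~\eqref{eq:bwp_R_j=00003D0} the factorization $\sqrt{r_{j}}R_{j}=r_{j}\tilde R_{j}$ makes every hyperplane $\{r_{j}=0\}$ invariant for the $r$-subsystem, so positivity of all $r_{j}(0)$ persists, and it simultaneously kills the $\varepsilon^{-k/2}$ singularity in $\nabla V_{0}\cdot\sqrt{r}\bullet R$; the bootstrap is then unnecessary and the argument extends verbatim to any $r(0)\in(0,\infty)^{n}$ with $|r(0)|<\varrho$. The main obstacle is the forward invariance of $\Omega_{c}$: one has to verify negativity of $\dot V$ on its boundary in spite of the $\varepsilon^{-k/2}$ blow-up of $1/\sqrt{r_{j}}$, and this is exactly the step where the sharp threshold $k<N-2$ is used.
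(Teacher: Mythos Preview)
Your proposal is correct and follows essentially the same approach as the paper: both use the Lyapunov function $V(r,v)=V_{0}(r)+\lambda\|v\|^{2}/2$, exploit Lemma~\ref{lem: bwp_Q_eps} to guarantee $r_{j}\ge e^{-c}\varepsilon^{k}$ on the relevant $V$-sublevel set, and use $k<N-2$ to make the dangerous term $\varepsilon^{N/2}\|r^{-1/2}\bullet R\|=O(\varepsilon^{(N-k)/2})=o(\varepsilon)$ subordinate to the quadratic decay.

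The only organizational difference is that the paper does the job in a single pass by splitting the region $\{r\in\mathcal{Q}_{\varepsilon}(k,c)\cap\mathcal{S}_{\varrho}\}$ into $\|r-r^{\ast}\|>1/4$ (where the $\varepsilon^{-k/2}$ bound is used) and $\|r-r^{\ast}\|\le1/4$ (where $r_{j}>1/4$ directly), obtaining $\dot V<0$ everywhere outside the $O(\sqrt{\varepsilon})$-ball at once; you instead first drive into a fixed-radius ball and then refine. Both work. One small slip: your forward-invariance argument should be run for the \emph{full} $V$-sublevel set $\mathfrak{S}=V^{-1}([0,|\ln\varepsilon^{k}|+c-1])\cap(\mathcal{S}_{\varrho}\times B_{R^{\ast}}^{m})$ in $(r,v)$-space rather than for the $V_{0}$-sublevel set $\Omega_{c}$ in $r$-space alone, since it is $\dot V<0$ (not $\dot V_{0}<0$) that you establish; the paper makes this precise by choosing $c\ge\lambda(R^{\ast})^{2}/2+1$ so that $\mathfrak{S}$ still projects into $\mathcal{Q}_{\varepsilon}(k,c)$.
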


\begin{proof}
Turning back to Proposition~\ref{prop:glob_sol_nf}, it is enough to verify the first inequality~\eqref{eq:r-r*<}. Let $C_{0}$ be a constant that bounds from above each of norms $\left\Vert B(r, v, \varepsilon) \right\Vert$, $\left\Vert R(r, v, \varphi, \varepsilon) \right\Vert$, $\left\Vert W(r, v, \varepsilon) \right\Vert$, $\left\Vert Z(r, v, \varphi, \varepsilon) \right\Vert$ on the set $[0, \varrho]^{n} \times B_{R^{\ast}}^{m} \times \mathbb{T}^{n} \times [0, \varepsilon_{0}]$. By~\eqref{eq:bwp_est_V+v} on the set $(0,\varrho]^{n} \times \mathbb{T}^{n} \times  B_{R^{\ast}}^{m} \times [0, \varepsilon_{0}]$, the derivative of the function $V(r, v)$ (see~\eqref{eq:bwp_def_V}) along trajectories of system~\eqref{eq:polnf_vec} can be estimated as
\begin{equation*}
\begin{gathered}
	\dot{V}_{\eqref{eq:polnf_vec}}(r,v,\varphi,\varepsilon):=\left\langle \nabla V_{0}(r),2\varepsilon\left[\alpha(v)-A(v)r+\varepsilon B(r,v,\varepsilon)\right]\bullet r+\varepsilon^{N/2}\sqrt{r}\bullet R(r,v,\varphi,\varepsilon)\right\rangle +\\
	+\lambda\left\langle v,\varepsilon c(v)+\varepsilon^{2}W(r,v,\varepsilon)+\varepsilon^{(N+3)/2}Z(r,v,\varphi,\varepsilon)\right\rangle \le\\
	\le-\varepsilon\mu\left(\left\Vert r-r^{\ast}\right\Vert ^{2}+\left\Vert v\right\Vert ^{2}\right)+2\varepsilon^{2}\left\langle r-r^{\ast},B(r,v,\varepsilon)\right\rangle +\varepsilon^{N/2}\left\langle r-r^{\ast},r^{-1/2}\bullet R(r,v,\varphi,\varepsilon)\right\rangle +\\
	+\varepsilon^{2}\lambda\left\langle v,W(r,v,\varepsilon)\right\rangle +\varepsilon^{(N+3)/2}\lambda\left\langle v,Z(r,v,\varphi,\varepsilon)\right\rangle \le\\
	\le-\varepsilon\left[\mu\left(\left\Vert r-r^{\ast}\right\Vert ^{2}+\left\Vert v\right\Vert ^{2}\right)-\left\Vert r-r^{\ast}\right\Vert \left(2C_{0}\varepsilon+\varepsilon^{N/2-1}\left\Vert r^{-1/2}\bullet R(r,v,\varphi,\varepsilon)\right\Vert \right)-2\varepsilon\lambda C_{0}\left\Vert v\right\Vert \right].
\end{gathered}
\end{equation*}
Let us show that for sufficiently small $\varepsilon_{0}$ this derivative does not exceed a certain negative value on the set
\begin{equation}\label{eq:bwp_set_dotV<0}
	\left\{ (r, v, \varphi, \varepsilon) \in \left[\mathcal{Q}_{\varepsilon} \left(k,c\right) \cap \mathcal{S}_{\varrho} \right] \times B_{R^{\ast}}^{m}\times\mathbb{T}^{n} \times(0,\varepsilon_{0}] \colon \sqrt{\left\Vert r-r^{\ast}\right\Vert ^{2}+\left\Vert v\right\Vert ^{2}}\ge 6\sqrt{\varepsilon}C_{0}/\mu\right\},
\end{equation}
where $c \ge \lambda[R^{\ast}]^{2} / 2 + 1$. First of all, one can notice that
\begin{equation*}
	-\varepsilon \left\Vert v \right\Vert \left[ \mu \left\Vert v \right\Vert - 2 \varepsilon \lambda C_{0} \right] \le \varepsilon^{3} \lambda^{2} C_{0}^{2} / \mu \quad \forall v \in B_{R^{\ast}}^{m}.
\end{equation*}
Let $(r, v, \varphi, \varepsilon)$ belong to set~\eqref{eq:bwp_set_dotV<0}. If $\left\Vert r - r^{\ast} \right\Vert > 1/4$, then inequalities $r_{j} \ge \mathrm{e}^{-c}\varepsilon^{k}$ yield that for the first $n$ coordinates of the point of  set~\eqref{eq:bwp_set_dotV<0} we get
\begin{equation*}
	\varepsilon^{N/2 - 1} \left \Vert r^{-1/2} \bullet R(r, v, \varphi, \varepsilon) \right\Vert \le \varepsilon^{N/2 - 1} C_{0} \mathrm{e}^{c/2} \varepsilon^{-k/2} = \varepsilon^{(N-k-2)/2} C_{0} \mathrm{e}^{c/2},
\end{equation*}
and then for sufficiently small $\varepsilon_{0}$
\begin{equation*}
	\dot{V}_{\eqref{eq:polnf_vec}} (r, v, \varphi, \varepsilon) \le - \frac{\varepsilon}{4} \left[ \frac{\mu}{4} - 2C_{0} \varepsilon - \varepsilon^{(N-k-2)/2} C_{0} \mathrm{e}^{c/2}\right] + \varepsilon^{3}\lambda^{2 }C_{0}^{2} / \mu < 0\quad\forall\varepsilon\in(0,\varepsilon_{0}].
\end{equation*}
If, on the contrary, $\left\Vert r - r^{\ast}\right\Vert \le1/4$, then $\left|r_{j}-1\right|<1/4$, and thus, $r_{j}>1/4$. Taking into account that in this case $\left\Vert r^{-1/2}\bullet R(r, v, \varphi, \varepsilon) \right\Vert \le 2 C_{0}$,
for sufficiently small $\varepsilon_{0}$ and all $\varepsilon\in(0,\varepsilon_{0}]$ we obtain
\begin{equation*}
\begin{gathered}
	\dot{V}_{\eqref{eq:polnf_vec}} (r, v, \varphi, \varepsilon) \le - \varepsilon \left[ \mu \left( \left\Vert r - r^{\ast} \right\Vert^{2} + \left\Vert v \right\Vert^{2} \right) - 4 \sqrt{\varepsilon} C_{0} \left\Vert r - r^{\ast} \right\Vert  - 2 \varepsilon \lambda C_{0} \left\Vert v \right\Vert \right] \le\\
	\le - \varepsilon \left[ \mu \left( \left\Vert r - r^{\ast} \right\Vert^{2} + \left\Vert v \right\Vert^{2} \right) - 4 \sqrt{2\varepsilon} C_{0} \sqrt{\left\Vert r - r^{\ast} \right\Vert^{2} + \left\Vert v \right\Vert^{2}} \right] < 0.
\end{gathered}
\end{equation*}

In accordance with Lemma~\ref{lem: bwp_Q_eps} the set $V_{0}^{-1} \left( \left[ 0, \left| \ln\varepsilon^{k} \right| + c - 1 \right] \right)$ lies in $\mathcal{Q}_{\varepsilon}(k,c)$, which means that
\begin{equation*}
	\mathfrak{S} := V^{-1} \left( \left[ 0, \left| \ln\varepsilon^{k} \right| + c - 1 \right] \right) \cap \left[ \mathcal{S}_{\varrho} \times  B_{R^{\ast}}^{m} \right] \subset \left[ \mathcal{Q}_{\varepsilon}  \left( k, c \right) \cap \mathcal{S}_{\varrho} \right] \times B_{R^{\ast}}^{m},
\end{equation*}
and Proposition~\ref{prop:glob_sol_nf} together with the estimates for $\dot{V}_{\eqref{eq:polnf_vec}} (r, v, \varphi, \varepsilon)$ imply that the set $\mathfrak{S} \times \mathbb{T}^{n}$ is forward invariant. Furthermore, if  $(r(0), v(0)) \in \mathfrak{S}$, then there exists a moment $\tau_{\varepsilon}^{\ast} > 0$ such that $V(r(t), v(t)) < c^{\ast}(\varepsilon)$ for all $t > \tau_{\varepsilon}^{\ast}$, where
\begin{equation*}
	c^{\ast}(\varepsilon) = \max \left\{ V(r,v) \colon \sqrt{\left\Vert r - r^{\ast} \right\Vert^{2} + \left\Vert v \right\Vert^{2}} = 6 \sqrt{\varepsilon} C_{0} / \mu\right\}.
\end{equation*}
In fact, in a closed ball centered at $(r^{\ast},0)$, the function $V(\cdot, \cdot)$ reaches its maximal values only on the boundary. Therefore, in an open ball of radius $6\sqrt{2\varepsilon}C_{0} / \mu$, which is centered at $(r^{\ast}, 0)$, the function $V(\cdot,\cdot)$ takes values that are less than $c^{\ast}(\varepsilon)$, which means that this ball lies inside $V^{-1} \left( [0, c^{\ast} (\varepsilon)] \right)$. Now the existence of $t_{\varepsilon}$ results from the negativity of $\dot{V}(r,v,\varphi,\varepsilon)$ at points of set~\eqref{eq:bwp_set_dotV<0}.

Obviously, if $r \in \mathcal{S}_{\varrho}$, $V_{0}(r) \le \left| \ln\varepsilon^{k} \right|$ and $\left\Vert v \right\Vert \le R^{\ast}$, then $V(r,v) \le \left| \ln\varepsilon^{k} \right| + c - 1$. Hence, the set $\mathfrak{S}$ contains the set $\left[ V_{0}^{-1} \left( \left[ 0, \left| \ln\varepsilon^{k} \right|\right] \right) \cap \mathcal{S}_{\varrho} \right] \times B_{R^{\ast}}^{m}$.

Furthermore, since $V_{0}(r) \sim \frac{1}{2} \left \langle H_{V}(r^{\ast}) (r - r^{\ast}), r - r^{\ast} \right\rangle$ near $r^{\ast}$, one can specify such $C^{\ast} > 0$, that the set $V^{-1}([0,c^{\ast}(\varepsilon)])$ lies in a ball of radius $C^{\ast}\sqrt{\varepsilon}$ with center at $(r^{\ast},0)$ for all sufficiently small $\varepsilon > 0$. It enables us to assign $t_{\varepsilon} = \max \left\{  \tau_{\varepsilon}, \tau_{\varepsilon}^{\ast}\right\}$.

Finally, if conditions~\eqref{eq:bwp_R_j=00003D0} are met, then $R_{j}(r, v, \varphi, \varepsilon) = \sqrt{r_{j}} \tilde{R}_{j}(r, v, \varphi, \varepsilon)$, $j = 1, \ldots, n$ and the proof runs as before in case $r(0) \in \mathcal{S}_{\varrho} \cap (0, \infty)^{n}$ if subsystem for $r$~\eqref{eq:polnf_vec} is replaced by
\begin{equation*}
	\dot{r} = 2 \varepsilon \left[ \alpha(v) - A(v)r + \varepsilon B(r, v, \varepsilon) + \varepsilon^{N/2-1} \tilde{R}(r, v, \varphi, \varepsilon) \right] \bullet r.
\end{equation*}
\end{proof}

\begin{remark}
It follows from Propositions~\ref{prop:glob_sol_nf} and~\ref{prop:moment_t_eps} that when the forward trajectory $\bigcup_{t \ge 0}(r(t), v(t), \varphi(t))$ has no common points with the set $\mathfrak{S} \times \mathbb{T}^{n}$, then there is a moment of time after which $r(t) \in \mathcal{K} \setminus V_{0}^{-1} \left( [0, \left| \ln\varepsilon^{k} \right| ] \right)$.
\end{remark}

Hereafter, our main question will be whether system~\eqref{eq:polnf_vec} possesses an invariant torus close to the invariant torus of the first approximation system and if so, what its basin of attraction is.

\section{Existence of an Invariant Torus and its Basin of Attraction}\label{sec:existinvtor}

Taking into account the already proved propositions, we will conduct further analysis of system~\eqref{eq:polnf_vec} in a neighborhood of the torus $r=r^{\ast}$, $v=0$.

In order to simplify our notations, we will combine the variables $r$ and $v$ into one vector variable $y=(r,v)$ (this $(n+m)$-dimensional variable has no connection with the $2n$-dimensional local variable in Section~\ref{sec:const-normal-sys}). Let us rewrite system~\eqref{eq:polnf_vec} as
\begin{equation}\label{eq:sys_phi_y}
\begin{aligned}
	\dot{y}= & \varepsilon F(y,\varepsilon)+\varepsilon^{N/2}G(y,\varphi,\varepsilon),\\
	\dot{\varphi}= & \bar{\omega}(y,\varepsilon)+\varepsilon^{N/2}H(y,\varphi,\varepsilon),
\end{aligned}
\end{equation}
where
\begin{equation*}
\begin{gathered}
	F(y,\varepsilon):=\left(2\left[\alpha(v)-A(v)r+\varepsilon B(r,v,\varepsilon)\right]\bullet r,c(v)+\varepsilon W(r,v,\varepsilon)\right),\\
	G(y,\varphi,\varepsilon):=\left(\sqrt{r}\bullet R(r,v,\varphi,\varepsilon),\varepsilon^{3/2}Z(r,v,\varphi,\varepsilon)\right),\\
	\bar{\omega}(y,\varepsilon):=\omega(v)+\varepsilon\Psi(r,v,\varepsilon),\quad H(y,\varphi,\varepsilon):=r^{-1/2}\bullet\Phi(r,v,\varphi,\varepsilon),
\end{gathered}
\end{equation*}
and let us assign $y^{\ast} := (r^{\ast}, 0)$. Then $F(y^{\ast}, 0)=0$. Since
\begin{equation*}
	F^{\prime}(y^{\ast},0)= \begin{pmatrix}
		J(r^{\ast}) & 2\left[\alpha^{\prime}(0)-A^{\prime}(0)r^{\ast}\right]\bullet r^{\ast}\\
		0 & c^{\prime}(0)
	\end{pmatrix}
\end{equation*}
and according to Proposition~\ref{prop:liearised_stab} and condition~\textbf{C6} the linear systems $\dot{r} = J(r^{\ast})r$ and $\dot{v} = c^{\prime}(0)v$ are asymptotically stable, which means that all of the eigenvalues of these systems' matrices have negative real parts. Consequently, the system $\dot{y} = F^{\prime}(y^{\ast},0)y$ shares this property, too. It is well known that there exists a positive definite quadratic form which has a negative definite derivative along trajectories of an asymptotically stable linear system with a constant matrix. This positive definite quadratic form sets a dot product structure. Therefore, we will further assert that the space $\mathbb{R}^{n+m}$ is endowed with the dot product $\left\langle \cdot, \cdot \right\rangle$ for which the quadratic form $\left\langle F_{y}^{\prime}(y^{\ast},0)y, y \right\rangle $ is negative definite.

Now, we can choose positive numbers $\gamma$, $\sigma$ and $\varepsilon_{0}$ in such a way that the inequality
\begin{gather}
	\left\langle \left[F_{y}^{\prime}(y,\varepsilon)+\varepsilon^{(N-2)/2}G_{y}^{\prime}(y,\varphi,\varepsilon)\right]z,z\right\rangle \le-2\gamma\left\Vert z\right\Vert ^{2}\label{eq:est_dissip}\\
	\forall(y,z,\varphi,\varepsilon)\in B_{\sigma}^{n+m}(y^{\ast})\times\mathbb{R}^{n+m}\times\mathbb{T}^{n}\times[0,\varepsilon_{0}].\nonumber
\end{gather}
holds. It implies that $B_{\sigma}^{n+m}(y^{\ast}) \times \mathbb{T}^{n}$ is a forward invariant set of system~\eqref{eq:sys_phi_y}. Thus, for each point $(y, \varphi) \in B_{\sigma}^{n+m}(y^{\ast}) \times \mathbb{T}^{n}$ its forward trajectory, denoted by $ \left\{ (\eta_{t}(y, \varphi)),\phi_{t}(y, \varphi) \right\} _{t \ge 0}$, lies in $B_{\sigma}^{n+m}(y^{\ast}) \times \mathbb{T}^{n}$. In other words, in $B_{\sigma}^{n+m}(y^{\ast})\times\mathbb{T}^{n}$ system~\eqref{eq:sys_phi_y} generates the semi-flow
\begin{equation*}
	 \left\{ \left( \eta_{t} (\cdot, \cdot), \phi_{t} (\cdot, \cdot) \right) \colon B_{\sigma}^{n+m}(y^{\ast}) \times \mathbb{T}^{n} \to B_{\sigma}^{n+m} (y^{\ast}) \times \mathbb{T}^{n} \right\}_{t\ge0}.
\end{equation*}
It should be mentioned, that actually, as it follows from results of Section~\ref{sec:first_approx}, system~\eqref{eq:sys_phi_y} generates a semi-flow on the set $\mathcal{S}_{\varrho} \times  B_{R^{\ast}}^{m} \times \mathbb{T}^{n}$. Moreover, each point that enters the set $\left[ \mathcal{S}_{\varrho} \cap  V_{0}^{-1} \left( [0, \left| \ln\varepsilon^{k} \right| ] \right) \right] \times B_{R^{\ast}}^{m} \times \mathbb{T}^{n}$ at some moment of time under action of this semi-flow necessarily enters $B_{\sigma}^{n+m}(y^{\ast}) \times \mathbb{T}^{n}$ after some instant of time and later on keeps moving inside $O(\sqrt{\varepsilon})$-neighborhood of the torus $\{y^{\ast}\}\times\mathbb{T}^{n}$.

Now, let us show that for any sufficiently small $\varepsilon > 0$ the set $B_{\sigma}^{n+m}(y^{\ast}) \times \mathbb{T}^{n}$ contains an $n$-dimensional invariant torus of system~\eqref{eq:sys_phi_y} which attracts all forward trajectories of this set, and hence, the attraction basin of this torus contains the set $\left[ \mathcal{S}_{\varrho} \cap V_{0}^{-1} \left( [0, k \left| \ln\varepsilon \right| ] \right) \right] \times B_{R^{\ast}}^{m} \times \mathbb{T}^{n}$.

\begin{remark}\label{rem: N5}
If $\sigma$ is small enough, condition~\textbf{C5} yields that $B_{\sigma}^{m} \subset \mathcal{A} (5,\nu)$ and then for $v \in B_{\sigma}^{m}$ results of Section~\ref{sec:const-normal-sys} remain valid for $N=5$. Because of this, we will further consider system~\eqref{eq:sys_phi_y} with $N \ge 5$.
\end{remark}

It follow from~\eqref{eq:est_dissip} that the evolution matrix $\Omega_{s}^{t}$ of the linear system
\begin{equation*}\label{eq:linsysP}
	\dot{z} = \varepsilon \left[ F_{y}^{\prime} (\eta_{t} (y, \varphi),\varepsilon) + \varepsilon^{(N-2)/2} G_{y}^{\prime} (\eta_{t} (y, \varphi), \phi_{t} (y, \varphi), \varepsilon) \right] z =: \varepsilon P(t; y, \varphi, \varepsilon) z,
\end{equation*}
can be estimated as
\begin{equation*}
	\left\Vert \Omega_{s}^{t} \right\Vert \le\mathrm{e}^{-2\varepsilon\gamma\cdot(t-s)}, \quad t \ge s \ge0.
\end{equation*}

One can choose the positive numbers $\sigma$ and $K$ in such a way, that for all $(y^{i}, \varphi^{i}, \varepsilon) \in B_{2\sigma}^{n+m} (y^{\ast}) \times \mathbb{T}^{n} \times [0, \varepsilon_{0}]$ the following inequalities will hold
\begin{gather}
	\left\Vert \bar{\omega} \left( y^{1}, \varepsilon \right) + \varepsilon^{N/2} H \left(y^{1}, \varphi^{1}, \varepsilon \right) - \bar{\omega} \left(y^{2}, \varepsilon \right) - \varepsilon^{N/2} H \left(y^{2}, \varphi^{2}, \varepsilon \right) \right\Vert \le \nonumber \\
	\le K \left[ \left\Vert y^{1} - y^{2} \right\Vert + \varepsilon^{N/2} \left\Vert \varphi^{1} - \varphi^{2} \right\Vert \right], \label{eq:ineq_om+H}\\
	\left\Vert F(y^{1}, \varepsilon) + \varepsilon^{(N-2)/2} G(y^{1}, \varphi^{1}, \varepsilon) - F(y^{2}, \varepsilon) - \varepsilon^{(N-2)/2} G(y^{2}, \varphi^{2}, \varepsilon) - \right. \nonumber \\
	\left. - \left[ F_{y}^{\prime} (y^{3}, \varepsilon) + \varepsilon^{(N-2)/2} G_{y}^{\prime} (y^{3}, \varphi^{3}, \varepsilon) \right] (y^{1} - y^{2}) \right\Vert \le \nonumber \\
	\le \left\Vert F(y^{1}, \varepsilon) + \varepsilon^{(N-2)/2} G(y^{1}, \varphi^{2}, \varepsilon) - F(y^{2}, \varepsilon) - \varepsilon^{(N-2)/2} G(y^{2}, \varphi^{2}, \varepsilon) - \right. \nonumber \\
	\left. - \left[ F_{y}^{\prime} (y^{2}, \varepsilon) + \varepsilon^{(N-2)/2} G_{y}^{\prime} (y^{2}, \varphi^{2}, \varepsilon) \right] (y^{1} - y^{2}) \right\Vert + \nonumber \\
	+\left\Vert \left[ F_{y}^{\prime} (y^{2}, \varepsilon) - F_{y}^{\prime} (y^{3}, \varepsilon) + \varepsilon^{(N-2)/2} G_{y}^{\prime} (y^{2}, \varphi^{2}, \varepsilon) - \varepsilon^{(N-2)/2} G_{y}^{\prime}(y^{3}, \varphi^{3}, \varepsilon) \right] (y^{1} - y^{2}) \right\Vert + \nonumber \\
	+\varepsilon^{(N-2)/2} \left\Vert G(y^{1}, \varphi^{1}, \varepsilon) - G(y^{1}, \varphi^{2}, \varepsilon) \right\Vert \le \nonumber \\
	\le K \left[ \left( \left\Vert y^{1} - y^{2} \right\Vert + \left\Vert y^{2} - y^{3} \right\Vert  + \varepsilon^{(N-2)/2} \left\Vert \varphi^{2} - \varphi^{3} \right\Vert \right) \left\Vert y^{1} - y^{2} \right\Vert  + \varepsilon^{(N-2)/2} \left\Vert \varphi^{1} - \varphi^{2} \right\Vert \right].\label{eq:ineq_F_G}
\end{gather}

\begin{proposition}\label{prop:def_theta}
Let us declare $\mathcal{B}_{\varepsilon} := \left\{  (y,z) \in \mathbb{R}^{n+m} \times \mathbb{R}^{n+m} \colon y, z \in B_{\sigma}^{n+m}(y^{\ast}), \left\Vert y - z \right\Vert \le \varepsilon \right\}$ and $M := 4K / \gamma$. There exists such $\varepsilon_{0}>0$, that when $\varepsilon \in (0,\varepsilon_{0})$ to each point $\left(y,z,\varphi\right)\in\mathcal{B}_{\varepsilon}\times\mathbb{T}^{n}$ there is a unique corresponding point $\theta(y,z,\varphi)\in\mathbb{T}^{n}$ such that for all $t \ge 0$ the inequalities
\begin{equation}\label{eq:ineq_varphi-phi}
\begin{aligned}
	\left\Vert \eta_{t} \left(y, \varphi \right) - \eta_{t} \left( z, \theta(y, z, \varphi) \right) \right\Vert & \le 2\mathrm{e}^{-\varepsilon\gamma t} \left\Vert y - z \right\Vert,\\
	\left\Vert \phi_{t} \left(y, \varphi \right) - \phi_{t} \left(z, \theta(y, z, \varphi) \right) \right\Vert & \le \frac{M}{\varepsilon} \mathrm{e}^{-\varepsilon\gamma t} \left\Vert y - z \right\Vert.
\end{aligned}
\end{equation}
hold. Furthermore, $\theta(\cdot, \cdot, \cdot) \in \mathrm{C} \left( \mathcal{B}_{\varepsilon} \times \mathbb{T}^{n} \! \to \! \mathbb{T}^{n} \right)$; for every $\left( y, z^{i}, \varphi\right) \in \mathcal{B}_{\varepsilon} \times \mathbb{T}^{n}$, $i=1,2,$ it is true that
\begin{equation*}
	\left\Vert \theta \left( y, z^{1}, \varphi \right) - \theta \left( y, z^{2}, \varphi \right) \right\Vert \le\frac{M}{\varepsilon} \left\Vert z^{1} - z^{2} \right\Vert,
\end{equation*}
and for any fixed point $(y,z) \in \mathcal{B}_{\varepsilon}$ the mapping $\theta(y,z,\cdot) \colon \mathbb{T}^{n} \to \mathbb{T}^{n}$ is a homeomorphism.
\end{proposition}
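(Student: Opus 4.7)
The plan is to construct $\theta(y,z,\varphi)$ by a Banach fixed point argument on a space of trajectory-difference pairs equipped with an exponential weight. Fix $(y,z,\varphi) \in \mathcal{B}_{\varepsilon} \times \mathbb{T}^{n}$. For any candidate $\theta \in \mathbb{T}^{n}$ the differences $\xi(t) := \eta_{t}(y,\varphi) - \eta_{t}(z,\theta)$ and $\psi(t) := \phi_{t}(y,\varphi) - \phi_{t}(z,\theta)$ satisfy a coupled system whose $y$-part is contractive at rate $2\varepsilon\gamma$ (by \eqref{eq:est_dissip}), while the $\varphi$-part is driven by $\xi$ and a small term $\varepsilon^{N/2}H$ and is otherwise neutral. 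So the only remaining freedom is $\psi(0) = \varphi - \theta$, and it must be chosen so that $\psi$ actually decays rather than drifts.

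Work in the complete metric space $\mathcal{Y}$ of continuous pairs $(\xi,\psi)\colon [0,\infty) \to \mathbb{R}^{n+m}\times\mathbb{R}^{n}$ satisfying $\xi(0) = y - z$, $\|\xi(t)\| \le 2\mathrm{e}^{-\varepsilon\gamma t}\|y-z\|$, $\|\psi(t)\| \le (M/\varepsilon)\mathrm{e}^{-\varepsilon\gamma t}\|y-z\|$, endowed with the exponentially weighted sup-norm corresponding to these bounds. Given $(\xi,\psi)\in\mathcal{Y}$, regard $y_{2}(t) := \eta_{t}(y,\varphi) - \xi(t)$ and $\varphi_{2}(t) := \phi_{t}(y,\varphi) - \psi(t)$ as a candidate ``second trajectory'' and define $\mathcal{F}(\xi,\psi) := (\tilde\xi,\tilde\psi)$ by
\begin{equation*}
\tilde\xi(t) = \Omega_{0}^{t}(y - z) + \varepsilon\int_{0}^{t}\Omega_{s}^{t} R_{\xi}(s)\,\mathrm{d}s, \qquad \tilde\psi(t) = -\int_{t}^{\infty} R_{\psi}(s)\,\mathrm{d}s,
\end{equation*}
where $R_{\xi}$ is the Taylor remainder of $F + \varepsilon^{(N-2)/2}G$ around trajectory $1$ and $R_{\psi}$ is the full difference $[\bar\omega + \varepsilon^{N/2}H](y_{1},\varphi_{1}) - [\bar\omega + \varepsilon^{N/2}H](y_{2},\varphi_{2})$. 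The forward integration in $\xi$ reflects its stable nature (with $\|\Omega_{s}^{t}\|\le\mathrm{e}^{-2\varepsilon\gamma(t-s)}$ from \eqref{eq:est_dissip}), while the backward integration for $\psi$ is precisely what selects the correct asymptotic phase by enforcing $\tilde\psi(\infty) = 0$.

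Verifying that $\mathcal{F}$ maps $\mathcal{Y}$ to itself reduces to elementary exponential integrals once one uses the pointwise estimates $\|R_{\xi}(s)\| \le K\bigl(2\|\xi(s)\|^{2} + \varepsilon^{(N-2)/2}\|\psi(s)\|\bigr)$ (apply \eqref{eq:ineq_F_G} with $y^{3}=y^{1}$, $\varphi^{3}=\varphi^{1}$) and $\|R_{\psi}(s)\| \le K\bigl(\|\xi(s)\| + \varepsilon^{N/2}\|\psi(s)\|\bigr)$ (from \eqref{eq:ineq_om+H}). Substituting the defining bounds of $\mathcal{Y}$ and integrating against $\mathrm{e}^{-2\varepsilon\gamma(t-s)}$ (resp.\ integrating to $+\infty$), one finds that the choice $M = 4K/\gamma$ is calibrated so that the $\tilde\psi$-bound closes up, while the $\tilde\xi$-bound closes provided $\|y-z\|\le\varepsilon$ is small. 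Contraction in the weighted norm follows from the same estimates since the corresponding Lipschitz constants pick up at least one factor of $\sqrt{\varepsilon}$ or $\|y-z\|$; here Remark~\ref{rem: N5} (i.e.\ $N\ge 5$) is essential because $\varepsilon^{(N-4)/2}$ must still tend to zero after the cancellation with $M/\varepsilon$. Banach's theorem yields a unique fixed point $(\xi_{\star},\psi_{\star})\in\mathcal{Y}$, and one defines $\theta(y,z,\varphi) := \varphi - \psi_{\star}(0)$; inequalities \eqref{eq:ineq_varphi-phi} are exactly the membership constraints of $\mathcal{Y}$.

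The continuity of $\theta$ in $(y,z,\varphi)$ and the Lipschitz estimate $\|\theta(y,z^{1},\varphi) - \theta(y,z^{2},\varphi)\| \le (M/\varepsilon)\|z^{1}-z^{2}\|$ follow from standard parameter-dependence of the Banach fixed point, since changing $z$ merely translates the initial value of $\xi$ and its effect is bounded by the contraction ratio times $\|z^{1}-z^{2}\|$, ultimately converted into an $O(\|z^{1}-z^{2}\|/\varepsilon)$ shift of $\psi_{\star}(0)$. For the homeomorphism property of $\theta(y,z,\cdot)$, exploit the symmetry of the setup: if $\theta^{\prime} = \theta(y,z,\varphi)$, then \eqref{eq:ineq_varphi-phi} holds for the pair of trajectories starting at $(y,\varphi)$ and $(z,\theta^{\prime})$, and swapping their roles together with the uniqueness clause forces $\theta(z,y,\theta^{\prime}) = \varphi$. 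Thus $\theta(z,y,\cdot)$ is a continuous two-sided inverse of $\theta(y,z,\cdot)$. The main technical obstacle is the delicate bookkeeping of $\varepsilon$-powers in the third paragraph: the weight $M/\varepsilon$ in the $\psi$-bound almost cancels the smallness coming from $\varepsilon^{N/2}$, and only $N\ge 5$ leaves enough margin to simultaneously close the self-map and contraction estimates uniformly in $\theta$.
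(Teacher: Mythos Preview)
Your overall architecture coincides with the paper's: both build the asymptotic phase $\theta$ as a Banach fixed point on an exponentially weighted space of candidate trajectories, with the $y$-component defined by a forward Duhamel integral against $\Omega_{s}^{t}$ and the $\varphi$-component by a backward integral to $+\infty$ that enforces decay. The paper works with the full second trajectory $(\zeta,\psi)$ as a function of $(t,y,z,\varphi)$ and builds the Lipschitz-in-$z$ bounds directly into the function space $\mathfrak{M}_{\varepsilon}$, whereas you work with the differences $(\xi,\psi)$ pointwise in $(y,z,\varphi)$ and recover the $z$-Lipschitz estimate afterwards by parameter dependence; these are equivalent set-ups.

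There is, however, one genuine slip in your contraction step. You assert that ``the corresponding Lipschitz constants pick up at least one factor of $\sqrt{\varepsilon}$ or $\|y-z\|$''. This is false for the dependence of $\tilde\psi$ on $\xi$: from \eqref{eq:ineq_om+H} the pointwise Lipschitz constant of $R_{\psi}$ in $\xi$ is $K$, with no small prefactor, and after integration to $+\infty$ it becomes $K/(\varepsilon\gamma)=M/(4\varepsilon)$, which blows up. What actually makes the map a contraction is not smallness of all couplings but the \emph{relative weighting} of the two components. If one takes the metric that your phrase ``corresponding to these bounds'' suggests, i.e.\ measures $\xi$ against $2\|y-z\|$ and $\psi$ against $(M/\varepsilon)\|y-z\|$, then the $\tilde\psi$--$\xi$ coupling contributes exactly $2K/(\gamma M)=1/2$, not $o(1)$, to the contraction factor; the remaining three couplings are genuinely $O(\sqrt{\varepsilon})$, so the total factor is $1/2+o(1)<1$. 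The paper makes precisely this choice explicit via the metric
\[
d\bigl[(\zeta^{1},\psi^{1}),(\zeta^{2},\psi^{2})\bigr]=M\sup_{\mathcal{U}_{\varepsilon}} e^{\varepsilon\gamma t}\|\zeta^{1}_{t}-\zeta^{2}_{t}\|+\varepsilon\sup_{\mathcal{U}_{\varepsilon}} e^{\varepsilon\gamma t}\|\psi^{1}_{t}-\psi^{2}_{t}\|,
\]
and then verifies contraction with a leading constant $1/4$ rather than a vanishing one. So your argument is salvageable, but the sentence justifying contraction is wrong as written; you should state the weighted metric explicitly and acknowledge that the calibration $M=4K/\gamma$ is what forces the dominant coupling to be $1/2$, not that all couplings are small. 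Your identification of where $N\ge5$ enters (the $\tilde\xi$--$\psi$ cross term producing $\varepsilon^{(N-4)/2}$) is correct.
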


\begin{proof}
Set $\mathfrak{M}_{\varepsilon}$ to be the space of continuous mappings
\begin{equation*}
	\mathcal{U}_{\varepsilon} := \mathbb{R}_{+} \times \mathcal{B}_{\varepsilon} \times \mathbb{T}^{n} \ni (t, y, z, \varphi) \mapsto (\zeta(t, y, z, \varphi), \psi(t, y, z, \varphi)) \in \mathbb{R}^{n+m} \times \mathbb{T}^{n}
\end{equation*}
which for all $(t,y,z,\varphi), \; (t,y,z^{i}, \varphi) \in \mathbb{R}_{+} \times \mathcal{B}_{\varepsilon} \times \mathbb{T}^{n}$, $i=1,2$ satisfy the inequalities
\begin{equation*}
\begin{gathered}
	\left\Vert \eta_{t}(y, \varphi) - \zeta(t, y, z, \varphi) \right\Vert \le 2\mathrm{e}^{-\varepsilon\gamma t} \left\Vert y - z \right\Vert,\\
	\left\Vert \phi_{t}(y, \varphi) - \psi(t, y, z, \varphi) \right\Vert \le \frac{M}{\varepsilon} \mathrm{e}^{-\varepsilon\gamma t} \left\Vert y - z \right\Vert,\\
	\left\Vert \zeta(t, y, z^{1}, \varphi) - \zeta(t, y, z^{2}, \varphi) \right\Vert \le 2\mathrm{e}^{-\varepsilon\gamma t} \left\Vert z^{1} - z^{2} \right\Vert,\\
	\left\Vert \psi(t, y, z^{1}, \varphi) - \psi(t, y, z^{2}, \varphi) \right\Vert \le \frac{M}{\varepsilon} \mathrm{e}^{-\varepsilon\gamma t} \left\Vert z^{1} - z^{2}\right\Vert
\end{gathered}
\end{equation*}
and the equality $\zeta(0,y,z,\varphi) = z$.

To make further notations shorter where it does not lead to confusion, for functions of $t,y,z,\varphi$ we will only indicate dependency on the time variable $t$ and write $\eta_{t}$, $\phi_{t}$, $\zeta_{t}$ and $\psi_{t}$ instead of $\eta_{t}(y,\varphi)$, $\phi_{t}(y,\varphi)$, $\zeta(t,y,z,\varphi)$ and $\psi(t,y,z,\varphi)$ respectively. Besides,  Remark~\ref{rem: N5} allows us to assume that $N=5$.

Let us introduce a structure of a complete metric space in $\mathfrak{M}_{\varepsilon}$ having defined the distance between a couple of elements $\left( \zeta^{i}, \psi^{i} \right) \in \mathfrak{M}_{\varepsilon}$, $i = 1, 2$ with the formula
\begin{equation*}
	d \left[ (\zeta^{1}, \psi^{1}), (\zeta^{2}, \psi^{2}) \right] := M \sup_{\mathcal{U}_{\varepsilon}} \left[ \mathrm{e}^{\varepsilon\gamma t} \left\Vert \zeta_{t}^{1} - \zeta_{t}^{2} \right\Vert \right] + \varepsilon \sup_{\mathcal{U}_{\varepsilon}} \left[ \mathrm{e}^{\varepsilon\gamma t} \left\Vert \psi_{t}^{1}-\psi_{t}^{2}\right\Vert \right].
\end{equation*}
On $\mathfrak{M}_{\varepsilon}$ we declare mappings
\begin{equation*}
\begin{gathered}
	\mathcal{F} [\zeta, \psi](t) := \phi_{t} + \intop_{t}^{\infty} \left[ \bar{\omega}(\eta_{s}, \varepsilon) - \bar{\omega}(\zeta_{s}, \varepsilon) \right] \mathrm{d} s + \varepsilon^{5/2} \intop_{t}^{\infty} \left[ H (\eta_{s}, \phi_{s}, \varepsilon) - H (\zeta_{s}, \psi_{s}, \varepsilon) \right] \mathrm{d} s,\\
	\mathcal{G} [\zeta, \psi](t) := \Omega_{0}^{t}z + \varepsilon\intop_{0}^{t} \Omega_{s}^{t} \left[ F (\zeta_{s}, \varepsilon) + \varepsilon^{3/2} G (\zeta_{s}, \psi_{s}, \varepsilon) - \left(F_{y}^{\prime}(\eta_{s},\varepsilon) + \varepsilon^{3/2} G_{y}^{\prime} (\eta_{s}, \phi_{s}, \varepsilon) \right) \zeta_{s} \right] \mathrm{d} s.
\end{gathered}
\end{equation*}
For a fixed bundle $\left( y, z, \varphi \right) \in \mathcal{B}_{\varepsilon} \times \mathbb{T}^{n}$ an element of the space $\mathfrak{M}_{\varepsilon}$ generates a solution of system~\eqref{eq:sys_phi_y} if and only if
\begin{equation}\label{eq:fixedpoint}
	\zeta_{t} = \mathcal{G} [\zeta, \psi](t), \quad \psi_{t} = \mathcal{F} [\zeta, \psi](t) \quad \forall t \ge 0.
\end{equation}
Indeed, the given element of the space $\mathfrak{M}_{\varepsilon}$ generates a solution if and only if for $t \ge 0$ the inequalities hold:
\begin{equation*}
\begin{gathered}
	\zeta_{t}=\mathcal{G}[\zeta,\psi](t),\\
	 \psi_{t}-\phi_{t}=\psi_{0}-\varphi+\intop_{0}^{t}\left[\bar{\omega}(\zeta_{s},\varepsilon)-\bar{\omega}(\eta_{s},\varepsilon)+\varepsilon^{5/2}\left(H(\zeta_{s},\psi_{s},\varepsilon)-H(\eta_{s},\phi_{s},\varepsilon)\right)\right]\mathrm{d}s.
\end{gathered}
\end{equation*}
The second one is obvious, whereas to derive the first one, it suffices to write down the solution $y = \zeta_{t}$ of the linear non-homogeneous system $\dot{y} = \varepsilon P(t;y,\varphi,\varepsilon)y + f(t)$, where
\begin{equation*}
	f(t) := \varepsilon \left[ F (\zeta_{t}, \varepsilon) + \varepsilon^{3/2} G(\zeta_{t}, \psi_{t}, \varepsilon) - \left( F_{y}^{\prime} (\eta_{t}, \varepsilon) + \varepsilon^{3/2} G_{y}^{\prime} (\eta_{t}, \phi_{t}, \varepsilon) \right) \zeta_{t} \right],
\end{equation*}
which takes the value $z$ at $t=0$. Since $\phi_{t} - \psi_{t} \to 0, \; t \to \infty$, we obtain the only possible initial value
\begin{equation}\label{eq:def_phi0}
	\psi_{0} = \varphi - \intop_{0}^{\infty} \left[ \bar{\omega}(\zeta_{s}, \varepsilon) - \bar{\omega}(\eta_{s}, \varepsilon) + \varepsilon^{5/2} \left( H (\zeta_{s}, \psi_{s}, \varepsilon) - H (\eta_{s}, \phi_{s}, \varepsilon) \right) \right] \mathrm{d} s,
\end{equation}
which leads to $\psi_{t} = \mathcal{F} [\zeta, \psi](t)$. Vice versa, if equalities~\eqref{eq:fixedpoint} are true, then it is evident that $t \mapsto (\zeta_{t}, \psi_{t})$ is a solution of system~\eqref{eq:sys_phi_y}.

Let us show that the choose suitable $\varepsilon_{0}$ for the mapping
\begin{equation*}
	\mathfrak{M}_{\varepsilon} \ni (\zeta, \psi) \mapsto \left( \mathcal{G}, \mathcal{F} \right)
\end{equation*}
to be a contraction. In the following we assume that $2 \varepsilon_{0} < \sigma$. Suppose that $(\zeta, \psi) \in \mathfrak{M}_{\varepsilon}$. Then inequality~\eqref{eq:ineq_om+H} yields
\begin{equation*}
\begin{gathered}
	\left\Vert \mathcal{F} [\zeta, \psi](t) - \phi_{t} \right\Vert \le K\intop_{t}^{\infty} \left[ \left\Vert  \zeta_{s} - \eta_{s} \right\Vert + \varepsilon^{5/2} \left\Vert \psi_{s} - \phi_{s} \right\Vert \right] \mathrm{d} s \le\\
	\le \frac{K \left[ 2 \left\Vert y - z \right\Vert + M \varepsilon^{3/2} \left\Vert y - z \right\Vert \right] \mathrm{e}^{-\varepsilon\gamma t}}{\varepsilon\gamma} \le \frac{M}{\varepsilon} \left[ \frac{1}{2} + \frac{M\varepsilon_{0}^{3/2}}{4} \right] \mathrm{e}^{-\varepsilon\gamma t} \left\Vert y - z \right\Vert \le \frac{M}{\varepsilon} \mathrm{e}^{-\varepsilon\gamma t} \left\Vert y - z \right\Vert
\end{gathered}
\end{equation*}
under condition that
\begin{equation}\label{eq:ineq1}
	\varepsilon_{0}^{3/2} \le 2 / M.
\end{equation}
If we take into account the fact that
\begin{equation*}
	\eta_{t} = \Omega_{0}^{t} y + \varepsilon \intop_{0}^{t} \Omega_{s}^{t} \left[ F (\eta_{s}, \varepsilon) + \varepsilon^{3/2} G (\eta_{s}, \phi_{s}, \varepsilon) - \left(F_{y}^{\prime} (\eta_{s}, \varepsilon) + \varepsilon^{3/2} G_{y}^{\prime} (\eta_{s}, \phi_{s}, \varepsilon) \right) \eta_{s} \right] \mathrm{d} s
\end{equation*}
and inequality~\eqref{eq:ineq_F_G} with $y^{1} = \zeta_{s}$, $\varphi^{1} = \psi_{s}$, $y^{2} = y^{3} = \eta_{s}$, $\varphi^{2} = \varphi^{3} = \phi_{s}$, it will give us
\begin{equation*}
\begin{gathered}
	\left\Vert \mathcal{G} [\zeta, \psi] (t) - \eta_{t} \right\Vert \le \mathrm{e}^{-2\varepsilon\gamma t} \left\Vert y - z \right\Vert +\\
	+ \varepsilon \intop_{0}^{t} \mathrm{e}^{-2\varepsilon\gamma\cdot(t-s)} K \left[ \left( \left\Vert  \zeta_{s} - \eta_{s} \right\Vert + \varepsilon^{3/2} \left\Vert \psi_{s} - \phi_{s} \right\Vert \right) \left\Vert  \zeta_{s} - \eta_{s} \right\Vert + \varepsilon^{3/2} \left\Vert \psi_{s} - \phi_{s} \right\Vert \right] \mathrm{d} s \le\\
	\le \mathrm{e}^{-2\varepsilon\gamma t} \left\Vert y - z \right\Vert + 4K \varepsilon^{2} \mathrm{e}^{-2\varepsilon\gamma t} t \left\Vert y - z \right\Vert + \frac{KM\varepsilon^{1/2}}{\gamma} \mathrm{e}^{-\gamma\varepsilon t} \left\Vert y - z \right\Vert \le\\
	\le \left[ 1 + \frac{M\varepsilon_{0}}{\mathrm{e}} + \frac{M^{2} \varepsilon_{0}^{1/2}}{4} \right] \mathrm{e}^{-\varepsilon\gamma t} \left\Vert y - z \right\Vert \le 2\mathrm{e}^{-\varepsilon\gamma t} \left\Vert y - z \right\Vert
\end{gathered}
\end{equation*}
as long as
\begin{equation*}
	\frac{M\varepsilon_{0}}{\mathrm{e}} + \frac{M^{2}\varepsilon_{0}^{1/2}}{4} \le 1.
\end{equation*}

Next, having set $\zeta_{t}^{i} := \zeta(t,y,z^{i}, \varphi)$, $\psi_{t}^{i} := \psi(t,y,z^{i},\varphi)$, $i = 1, 2$ and having made the assignments in inequalities~\eqref{eq:ineq_om+H}) and~\eqref{eq:ineq_F_G}) $(y^{i},\varphi^{i}) = (\zeta_{s}^{i}, \psi_{s}^{i})$, $i = 1, 2$, $y^{3} = \eta_{s}$, $\varphi^{3} = \phi_{s}$, we get
\begin{equation*}
\begin{gathered}
	\mathrm{e}^{\varepsilon\gamma t} \left[ \left\Vert \mathcal{F} [\zeta^{1}, \psi{}^{1}] (t) - \mathcal{F} [\zeta^{2}, \psi^{2}] (t) \right\Vert \right] \le\\
	\le \mathrm{e}^{\varepsilon\gamma t} \intop_{t}^{\infty} \mathrm{e}^{-\varepsilon\gamma s} K \mathrm{e}^{\varepsilon\gamma s} \left[ \left\Vert \zeta_{s}^{1} - \zeta_{s}^{2} \right\Vert  + \varepsilon^{5/2} \left\Vert \psi_{s}^{1} - \psi_{s}^{2} \right\Vert \right] \mathrm{d} s \le\\
	\le \frac{M}{4\varepsilon} \sup_{s\ge0} \left[ \mathrm{e}^{\varepsilon\gamma s} \left\Vert  \zeta_{s}^{1} - \zeta_{s}^{2} \right\Vert \right] + \frac{M\varepsilon_{0}^{3/2}}{4} \sup_{s\ge0} \left[ \mathrm{e}^{\varepsilon\gamma s} \left\Vert \psi_{s}^{1} - \psi_{s}^{2} \right\Vert \right] \le\\
	\le \left[ \frac{1}{2} + \frac{M\varepsilon_{0}^{3/2}}{4} \right] \frac{M}{\varepsilon} \left\Vert z^{1} - z^{2} \right\Vert,
\end{gathered}
\end{equation*}
\begin{equation*}
\begin{gathered}
	\mathrm{e}^{\varepsilon\gamma t} \left[ \left\Vert \mathcal{G} [\zeta^{1}, \psi{}^{1}] (t) - \mathcal{G} [\zeta^{2}, \psi^{2}] (t) \right\Vert \right] \le \mathrm{e}^{-\varepsilon\gamma t} \left\Vert z^{1} - z^{2} \right\Vert +\\
	+ \mathrm{e}^{-\varepsilon\gamma t} \varepsilon K \left[ \intop_{0}^{t} \mathrm{e}^{\varepsilon\gamma s} \left[ \left\Vert \zeta_{s}^{1} - \eta_{s} \right\Vert + 2 \left\Vert \zeta_{s}^{2} - \eta_{s} \right\Vert + \varepsilon^{3/2} \left\Vert \psi_{s}^{2} - \phi_{s} \right\Vert \right] \mathrm{d}  s \right] \sup_{s\ge0} \left[\mathrm{e}^{\varepsilon\gamma s} \left\Vert \zeta_{s}^{1} - \zeta_{s}^{2} \right\Vert \right] +\\
	+ \mathrm{e}^{-\varepsilon\gamma t} \varepsilon^{5/2} K \left[ \intop_{0}^{t} \mathrm{e}^{\varepsilon\gamma s} \mathrm{d} s \right] \sup_{s\ge0} \left[ \mathrm{e}^{\varepsilon\gamma s} \left\Vert  \psi_{s}^{1} - \psi_{s}^{2} \right\Vert \right] \le \left\Vert z^{1} - z^{2} \right\Vert +\\
	+ \varepsilon K \left( 6\varepsilon + M\varepsilon^{3/2} \right) \mathrm{e}^{-\varepsilon\gamma t} t \sup_{s\ge0} \left[ \mathrm{e}^{\varepsilon\gamma s} \left\Vert \zeta_{s}^{1} - \zeta_{s}^{2} \right\Vert \right] + \frac{K\varepsilon^{3/2}}{\gamma} \sup_{s\ge0} \left[ \mathrm{e}^{\varepsilon\gamma s} \left\Vert \psi_{s}^{1} - \psi_{s}^{2} \right\Vert \right] \le\\
	\le \left[ 1 + \frac{3M\varepsilon_{0}}{\mathrm{e}} + \frac{M^{2} \varepsilon_{0}^{3/2}}{\mathrm{2e}} + \frac{M^{2}\varepsilon_{0}^{1/2}}{4} \right] \left\Vert z^{1} - z^{2} \right\Vert.
\end{gathered}
\end{equation*}
If we choose $\varepsilon_{0}>0$ that satisfies inequalities~\eqref{eq:ineq1} and
\begin{equation}\label{eq:ineq2}
	\frac{3M\varepsilon_{0}}{\mathrm{e}} + \frac{M^{2}\varepsilon_{0}^{3/2}}{\mathrm{2e}} + \frac{M^{2}\varepsilon_{0}^{1/2}}{4} \le 1,
\end{equation}
and also include that the improper integral in the expression for $\mathcal{F}$ converges uniformly in $y ,z, \varphi$, then for all $\varepsilon \in (0, \varepsilon_{0})$ and $(\zeta, \psi) \in \mathfrak{M}_{\varepsilon}$ we will have
\begin{equation}\label{eq:map_F_G}
	\mathfrak{M}_{\varepsilon} \ni (\zeta,\psi) \mapsto \left( \mathcal{G} [\zeta, \psi], \mathcal{F} [\zeta, \psi] \right) \in \mathfrak{M}_{\varepsilon}.
\end{equation}

Next, having defined for every $(\zeta^{1}, \psi^{1}), (\zeta^{2}, \psi^{2}) \in \mathfrak{M}_{\varepsilon}$ the functions  $\zeta_{t}^{i} = \zeta^{i}(t,y,z,\varphi)$, $\psi_{t}^{i} = \psi^{i}(t,y,z,\varphi)$ and having set in inequalities~\eqref{eq:ineq_om+H} and~\eqref{eq:ineq_F_G} $(y^{i}, \varphi^{i}) = (\zeta_{s}^{i}, \psi_{s}^{i})$, $i = 1, 2$, $y^{3} = \eta_{s}$, $\varphi^{3} = \phi_{s}$, we draw in entirely the same manner as before that
\begin{equation*}
\begin{gathered}
	\mathrm{e}^{\varepsilon\gamma t} \left[ \left\Vert \mathcal{F} [\zeta^{1}, \psi{}^{1}] (t) - \mathcal{F} [\zeta^{2}, \psi^{2}] (t) \right\Vert \right] \le\\
	\le \frac{M}{4\varepsilon} \sup_{\mathcal{U}_{\varepsilon}} \left[ \mathrm{e}^{\varepsilon\gamma s} \left\Vert \zeta_{s}^{1} - \zeta_{s}^{2} \right\Vert \right] + \frac{M\varepsilon_{0}^{3/2}}{4} \sup_{\mathcal{U}_{\varepsilon}} \left[ \mathrm{e}^{\varepsilon\gamma s} \left\Vert  \psi_{s}^{1} - \psi_{s}^{2} \right\Vert \right],\\
	\mathrm{e}^{\varepsilon\gamma t} \left[ \left\Vert \mathcal{G} [\zeta^{1}, \psi{}^{1}] (t) - \mathcal{G}_{t}[\zeta^{2}, \psi^{2}] (t) \right\Vert \right] \le\\
	\le \varepsilon K \left( 6\varepsilon + M\varepsilon^{3/2} \right) \mathrm{e}^{-\varepsilon\gamma t} t \sup_{\mathcal{U}_{\varepsilon}} \left[ \mathrm{e}^{\varepsilon\gamma s} \left\Vert  \zeta_{s}^{1} - \zeta_{s}^{2} \right\Vert \right] + \frac{K\varepsilon^{3/2}}{\gamma} \sup_{\mathcal{U}_{\varepsilon}} \left[ \mathrm{e}^{\varepsilon\gamma s} \left\Vert \psi_{s}^{1} - \psi_{s}^{2} \right\Vert \right] \le\\
	\le \frac{M\varepsilon_{0}}{4\mathrm{e}} \left( 6 + M\varepsilon_{0}^{1/2} \right) \sup_{\mathcal{U}_{\varepsilon}} \left[ \mathrm{e}^{\varepsilon\gamma s} \left\Vert \zeta_{s}^{1} - \zeta_{s}^{2} \right\Vert \right] + \frac{M\varepsilon\varepsilon_{0}^{1/2}}{4} \sup_{\mathcal{U}_{\varepsilon}} \left[\mathrm{e}^{\varepsilon\gamma s} \left\Vert \psi_{s}^{1} - \psi_{s}^{2} \right\Vert \right].
\end{gathered}
\end{equation*}
It yields
\begin{equation*}
\begin{gathered}
	d \left[ \left( \mathcal{G} [\zeta^{1}, \psi^{1}], \mathcal{F} [\zeta^{1}, \psi^{1}] \right), \left(\mathcal{G} [\zeta^{2}, \psi^{2}], \mathcal{F} [\zeta^{2}, \psi^{2}] \right) \right] \le\\
	\le \frac{M\varepsilon_{0}^{3/2} + M^{2}\varepsilon_{0}^{1/2}}{4} \varepsilon \sup_{\mathcal{U}_{\varepsilon}} \left[ \mathrm{e}^{\varepsilon\gamma s} \left\Vert \psi_{s}^{1} - \psi_{s}^{2} \right\Vert \right] + \left[ \frac{1}{4} + \frac{\varepsilon_{0} M\left( 6 + M\varepsilon_{0}^{1/2}\right)}{4\mathrm{e}} \right] M \sup_{\mathcal{U}_{\varepsilon}} \left[ \mathrm{e}^{\varepsilon\gamma s} \left\Vert \zeta_{s}^{1} - \zeta_{s}^{2} \right\Vert \right].
\end{gathered}
\end{equation*}
One can easily ensure, that if $\varepsilon_{0}$ is small enough, inequalities~\eqref{eq:ineq1},~\eqref{eq:ineq2} grant the validity of the contraction conditions in $\mathfrak{M}_{\varepsilon}$ for mapping~\eqref{eq:map_F_G} for all $\varepsilon \in (0, \varepsilon_{0})$. The fixed point $\left(\zeta, \psi\right)$ of this mapping is precisely the solution of system~\eqref{eq:sys_phi_y} which belongs to the space $\mathfrak{M}_{\varepsilon}$.

By~\eqref{eq:def_phi0} we can explicitly determine
\begin{equation*}
	\theta(y, z, \varphi) := \psi(0, y, z, \varphi).
\end{equation*}
Clearly, the mapping $\theta(\cdot, \cdot, \cdot)$ is continuous in all variables and is Lipschitz-continuous in $z$ with the  constant $M / \varepsilon$. Moreover, since $\zeta(0, y, z, \varphi) = z$, we have
\begin{equation*}
	\zeta(t, y, z, \varphi) = \eta_{t} \left( z, \theta(y, z, \varphi) \right), \quad
	\psi(t, y, z, \varphi) = \phi_{t} \left( z, \theta(y, z, \varphi) \right).
\end{equation*}

Finally, it follows from the very construction of the mapping $\vartheta_{y}^{z}(\cdot) := \theta(y, z, \cdot)$ that for all  $(y, z, \varphi) \in \mathcal{B}_{\varepsilon} \times \mathbb{T}^{n}$ the equality $\vartheta_{z}^{y} \circ \vartheta_{y}^{z}(\varphi) = \varphi$ holds. Indeed, by~\eqref{eq:ineq_varphi-phi} to the point $\left (z, y, \varphi^{\prime} \right) \in \mathcal{B}_{\varepsilon} \times \mathbb{T}^{n}$, where $\varphi^{\prime} = \vartheta_{y}^{z}(\varphi)$, there is the corresponding point $\vartheta(z, y, \varphi^{\prime}) = \varphi$. Thus, there is the continuous inverse mapping $\left[ \vartheta_{y}^{z} \right]^{-1}(\cdot) = \vartheta_{z}^{y}(\cdot)$ defined on the set $\vartheta_{y}^{z} \left( \mathbb{T}^{n} \right)$. But then $\vartheta_{y}^{z} \left( \mathbb{T}^{n} \right)$ being an open-closed subset of a torus, which itself is an open-closed set, coincides with $\mathbb{T}^{n}$. Hence, $\vartheta_{y}^{z}(\cdot)$ is a homeomorphism of the torus onto itself.
\end{proof}

Since $\det F_{y}^{\prime} (y^{\ast}, 0) \ne 0$, the implicit function theorem says that for sufficiently small $\varepsilon_{0} > 0$, there exists a unique smooth mapping $y_{\ast}(\cdot) \colon [0, \varepsilon_{0}] \to \mathbb{R}^{n+m}$ such that  $y_{\ast}(0) = y^{\ast}$ and $F(y_{\ast}(\varepsilon), \varepsilon) = 0$ for all $\varepsilon \in [0, \varepsilon_{0}]$. On the ground of invariant tori perturbation theory~\cite{Hale61,Sam91,Sam97,Fen71,Bib90} let us prove the next proposition.

\begin{proposition}\label{prop:lipinvtor}
There exists such $\varepsilon_{0} > 0$ that for all $\varepsilon \in (0, \varepsilon_{0})$ system~\eqref{eq:sys_phi_y} has an invariant torus $\mathcal{T}_{\varepsilon}$ given by equation $y = y_{\ast}(\varepsilon) + \varepsilon \xi_{\varepsilon}(\varphi)$, where the mapping $\xi_{\varepsilon}(\cdot) \colon \mathbb{T}^{n} \to B_{\varrho(\varepsilon)}^{n+m}(0)$ satisfies the Lipschitz condition with the  Lipschitz constant $L(\varepsilon)$ such that $L(\varepsilon) \to 0$, and $\varrho(\varepsilon) \to 0$ when $\varepsilon \to 0$. This torus is a local attractor and it attracts all forward trajectories which start from an $\varepsilon( 1 - \varrho(\varepsilon))$-neighborhood of the point $y_{\ast}(\varepsilon)$.
\end{proposition}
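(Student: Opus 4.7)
The plan is to follow the perturbation-of-invariant-tori framework of~\cite{Hale61,SamMit76,Sam91,Sam97,Fen71,Bib90}, adapted to the slow--fast system~\eqref{eq:sys_phi_y}, and then to deduce attraction directly from Proposition~\ref{prop:def_theta}. By Remark~\ref{rem: N5} we may assume $N \ge 5$ throughout. First I perform the translation $y = y_\ast(\varepsilon) + \varepsilon u$: since $F(y_\ast(\varepsilon),\varepsilon) = 0$, Taylor expansion rewrites the $y$-equation as
\begin{equation*}
	\dot u = \varepsilon A(\varepsilon) u + \tilde F(u,\varepsilon) + \varepsilon^{(N-2)/2} \tilde G(u,\varphi,\varepsilon),
\end{equation*}
where $A(\varepsilon) := F_y'(y_\ast(\varepsilon),\varepsilon)$, $\|\tilde F(u,\varepsilon)\| = O(\varepsilon^2\|u\|^2)$ and $\tilde G$ is bounded; the $\varphi$-equation becomes an $O(\varepsilon)$-perturbation of $\dot\varphi = \bar\omega(y_\ast(\varepsilon),\varepsilon)$. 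Estimate~\eqref{eq:est_dissip} yields $\|\mathrm{e}^{\varepsilon A(\varepsilon) t}\| \le \mathrm{e}^{-2\varepsilon\gamma t}$ for $t \ge 0$, supplying the exponential dichotomy on which the cited theorems rest.

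Next I set up the invariance equation for a candidate graph $u = \xi(\varphi)$ as a Lyapunov--Perron integral equation
\begin{equation*}
	\xi(\varphi_0) = \int_{-\infty}^{0} \mathrm{e}^{-\varepsilon A(\varepsilon) s} \bigl[ \tilde F(\xi(\Phi_s^\xi(\varphi_0)),\varepsilon) + \varepsilon^{(N-2)/2}\tilde G(\xi(\Phi_s^\xi(\varphi_0)),\Phi_s^\xi(\varphi_0),\varepsilon) \bigr] \,\mathrm{d} s,
\end{equation*}
where $\Phi_t^\xi$ is the flow on $\mathbb{T}^n$ produced by the $\varphi$-equation evaluated along $u = \xi(\varphi)$. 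On the closed subset of continuous $\xi\colon\mathbb{T}^n\to\mathbb{R}^{n+m}$ satisfying $\|\xi\|_{C^0}\le\varrho(\varepsilon)$ and $\mathrm{Lip}(\xi) \le L(\varepsilon)$ (endowed with the $C^0$ metric), the dichotomy contributes a factor $1/(2\varepsilon\gamma)$ upon integration, so a nonlinearity of order $\varepsilon^2 \varrho^2 + \varepsilon^{(N-2)/2}$ translates into an output of order $\varepsilon\varrho^2 + \varepsilon^{(N-4)/2}$. For $N\ge 5$ both terms vanish with $\varepsilon$, rendering the operator a self-map and, after analogous estimates for differences, a contraction, provided $\varrho(\varepsilon), L(\varepsilon) \to 0$ are tuned appropriately. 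The Banach fixed-point theorem then delivers the unique Lipschitz graph $\xi_\varepsilon$, hence the torus $\mathcal{T}_\varepsilon$.

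For the attraction claim I would exploit Proposition~\ref{prop:def_theta} directly. Given $y$ with $\|y - y_\ast(\varepsilon)\| \le \varepsilon(1-\varrho(\varepsilon))$ and $\varphi \in \mathbb{T}^n$, set $z(\varphi_0) := y_\ast(\varepsilon) + \varepsilon\xi_\varepsilon(\varphi_0)$; the triangle inequality yields $\|y - z(\varphi_0)\| \le \varepsilon$, so $(y,z(\varphi_0)) \in \mathcal{B}_\varepsilon$ for every $\varphi_0 \in \mathbb{T}^n$. The map $\Theta\colon\varphi_0\mapsto\theta(y,z(\varphi_0),\varphi)$ is therefore well-defined on $\mathbb{T}^n$, and its Lipschitz constant does not exceed $(M/\varepsilon)\cdot\varepsilon L(\varepsilon) = M L(\varepsilon) < 1$ for sufficiently small $\varepsilon$, by Proposition~\ref{prop:def_theta} composed with the $\varepsilon L(\varepsilon)$-Lipschitz dependence of $z$ on $\varphi_0$. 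Its unique fixed point $\varphi_0^\ast$ furnishes a point $(z(\varphi_0^\ast),\varphi_0^\ast) \in \mathcal{T}_\varepsilon$ whose forward trajectory stays on $\mathcal{T}_\varepsilon$ by invariance, and the synchronization inequalities~\eqref{eq:ineq_varphi-phi} give exponential attraction of $(\eta_t(y,\varphi),\phi_t(y,\varphi))$ to $\mathcal{T}_\varepsilon$.

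The principal obstacle I anticipate is the contraction step for $\xi$ in the Lipschitz class, because $\Phi_s^\xi$ depends nonlinearly on $\xi$ and its sensitivity $\partial_{\varphi_0}\Phi_s^\xi$ must be controlled uniformly over the whole semi-axis $s\in(-\infty,0]$. Balancing the $O(\varepsilon^2)$ and $O(\varepsilon^{(N-2)/2})$ smallness of the perturbation (where the hypothesis $N\ge 5$ is essential) against the potential exponential-in-$s$ growth of this sensitivity will require Gronwall estimates of the same flavour already employed in the proof of Proposition~\ref{prop:def_theta}, together with a careful joint choice of $\varrho(\varepsilon)$ and $L(\varepsilon)$.
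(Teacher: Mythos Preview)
Your proposal is correct and follows essentially the same route as the paper. The only cosmetic difference is that the paper, after the substitution $y=y_\ast(\varepsilon)+\varepsilon\xi$, simply invokes Lemma~2.1 of~\cite{Hale61} for the existence of the Lipschitz graph $\xi_\varepsilon$ rather than writing out the Lyapunov--Perron fixed-point argument you sketch; the attraction step via the contraction $\varphi_0\mapsto\theta(y,z_\varepsilon(\varphi_0),\varphi)$ with constant $ML(\varepsilon)<1$ is identical to the paper's.
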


\begin{proof}
Let us switch to a new variable $\xi$ in system~\eqref{eq:sys_phi_y} with the substitution $y = y_{\ast}(\varepsilon) + \varepsilon\xi$. It will give us
\begin{equation*}
\begin{aligned}
	\dot{\xi} &= \varepsilon \left[ F^{\prime}(y_{\ast}(\varepsilon), \varepsilon) \xi + \sqrt{\varepsilon}G(y_{\ast}(\varepsilon) + \varepsilon \xi, \varphi, \varepsilon) \right],\\
	\dot{\varphi} &= \bar{\omega}(y_{\ast}(\varepsilon) + \varepsilon \xi, \varepsilon) + \varepsilon^{5/2}H(y_{\ast}(\varepsilon) + \varepsilon\xi, \varphi, \varepsilon).
\end{aligned}
\end{equation*}
This system can be written as
\begin{equation}\label{eq:sys_phi_xi}
\begin{aligned}
	\dot{\xi} &= \varepsilon \left[ F_{y}^{\prime}(y^{\ast}, 0) \xi + \Xi(\xi, \varphi, \varepsilon) \right],\\
	\dot{\varphi} &= \bar{\omega}(y_{\ast}(\varepsilon), \varepsilon) + \varepsilon \Theta(\xi, \varphi, \varepsilon),
\end{aligned}
\end{equation}
where
\begin{equation*}
\begin{gathered}
	\Xi(\varphi, \xi, \varepsilon) = \left[F_{y}^{\prime}(y_{\ast}(\varepsilon), \varepsilon) - F_{y}^{\prime}(y^{\ast}, 0) \right] \xi + \sqrt{\varepsilon}G(y_{\ast}(\varepsilon) + \varepsilon \xi, \varphi, \varepsilon),\\
	\Theta(\varphi, \xi, \varepsilon) = \intop_{0}^{1} \bar{\omega}_{y}^{\prime} \left( y_{\ast}(\varepsilon) + \varepsilon s \xi, \varepsilon \right) \xi \mathrm{d} s + \varepsilon^{3/2}H(y_{\ast}(\varepsilon) + \varepsilon \xi, \varphi, \varepsilon).
\end{gathered}
\end{equation*}
One can easily make sure that Lemma~2.1 from~\cite{Hale61} is applicable to system~\eqref{eq:sys_phi_xi}. According to this lemma, there exists $\varepsilon_{0} > 0$ such that for all $\varepsilon \in (0, \varepsilon_{0})$ the system has an invariant torus defined by equation $\xi = \xi_{\varepsilon}(\varphi)$, where the mapping $\xi_{\varepsilon}(\cdot)$ possesses all of the properties mentioned earlier.

Hence, system~\eqref{eq:sys_phi_y} in an $\varepsilon\varrho(\varepsilon)$-neighborhood of point $y_{\ast}(\varepsilon)$ has an invariant torus $\mathcal{T}_{\varepsilon}$ given by equation $y = y_{\ast} (\varepsilon) + \varepsilon\xi_{\varepsilon}(\varphi)$. Let us assign $z_{\varepsilon}(\varphi) := y_{\ast}(\varepsilon) + \varepsilon\xi_{\varepsilon}(\varphi)$. For sufficiently small $\varepsilon_{0}$ and $\varepsilon \in (0,\varepsilon_{0})$ the point $y_{\ast}(\varepsilon)$ lies in $B_{\sigma}^{n+m}(y^{\ast})$ together with its $\varepsilon$-neighborhood and for any such $y_{0}$ that $\left\Vert y_{0} - y_{\ast}(\varepsilon) \right\Vert < \varepsilon( 1 - \varrho(\varepsilon))$ and arbitrary $\varphi \in \mathbb{T}^{n}$ the equalities
\begin{equation*}
	\left\Vert y_{0} - z_{\varepsilon}(\varphi) \right\Vert \le \left\Vert y_{0} - y_{\ast}(\varepsilon) \right\Vert + \left\Vert y_{\ast}(\varepsilon) - z_{\varepsilon}(\varphi) \right\Vert < \varepsilon(1 - \varrho(\varepsilon)) + \varepsilon\varrho(\varepsilon) = \varepsilon.
\end{equation*}
are valid. Thus, if $y_{0}$ is an arbitrary point of an $\varepsilon(1 - \varrho(\varepsilon))$-neighborhood of the point $y_{\ast}(\varepsilon)$, then $(y_{0}, z_{\varepsilon}(\varphi)) \in \mathcal{B}_{\varepsilon}$ for all $\varphi \in \mathbb{T}^{n}$. Proposition~\ref{prop:def_theta} implies that for all $t\ge0$ we have
\begin{equation*}
\begin{aligned}
	\left\Vert \eta_{t} \left(y_{0}, \varphi_{0} \right) - \eta_{t} \left( z_{\varepsilon}(\varphi), \theta(y_{0}, z_{\varepsilon}(\varphi), \varphi_{0}) \right) \right\Vert & \le 2 \mathrm{e}^{-\varepsilon\gamma t} \left\Vert y_{0} - z_{\varepsilon}(\varphi) \right\Vert,\\
	\left\Vert \phi_{t} \left(y_{0}, \varphi_{0} \right) - \phi_{t} \left( z_{\varepsilon}(\varphi), \theta(y_{0}, z_{\varepsilon}(\varphi), \varphi_{0}) \right) \right\Vert & \le \frac{M}{\varepsilon}\mathrm{e}^{-\varepsilon\gamma t} \left\Vert y_{0} - z_{\varepsilon}(\varphi) \right\Vert.
\end{aligned}
\end{equation*}
It means that the forward trajectory of the point $(y_{0},\varphi_{0})$ is attracted to the forward trajectory of the point $(z_{\varepsilon}(\varphi), \theta(y_{0},z_{\varepsilon}(\varphi), \varphi_{0}))$. To ensure that the latter trajectory lies on the invariant torus, the fixed point condition has to be fulfilled:
\begin{equation*}
	\theta(y_{0},z_{\varepsilon}(\varphi), \varphi_{0}) = \varphi.
\end{equation*}
Let us show that such a fixed point on the torus $\mathbb{T}^{n}$ exists. The number $\varepsilon_{0}$ may be taken small enough for $ML(\varepsilon) < 1$ to hold for all $\varepsilon\in(0,\varepsilon_{0})$. Then, in accordance with Proposition~\ref{prop:def_theta}, for arbitrary $\varphi_{1}, \varphi_{2} \in \mathbb{T}^{n}$ we have
\begin{equation*}
	\left\Vert \theta(y_{0},z_{\varepsilon}(\varphi_{1}),\varphi_{0})-\theta(y_{0},z_{\varepsilon}(\varphi_{2}),\varphi_{0})\right\Vert \le ML(\varepsilon)\left\Vert \varphi_{1}-\varphi_{2}\right\Vert .
\end{equation*}
Consequently, the contraction mappings principle implies the existence of the unique point $\varphi_{\ast} = \varphi_{\ast}(y_{0},\varphi_{0}) \in \mathbb{T}^{n}$ such that  $\theta(y_{0}, z_{\varepsilon}(\varphi_{\ast}) ,\varphi_{0}) = \varphi_{\ast}$, which means that the forward trajectory of the point  $(y_{0}, \varphi_{0})$ is attracted to the forward trajectory of the point $(z_{\varepsilon}(\varphi_{\ast}), \varphi_{\ast})$ of the invariant torus  $\mathcal{T}_{\varepsilon}$. At the same time
\begin{equation}\label{eq:attractT_eps}
	\left\Vert \eta_{t}(y_{0}, \varphi_{0}) - z_{\varepsilon} \left( \phi_{t}(\varphi_{\ast}) \right) \right\Vert \le 2 \mathrm{e}^{-\varepsilon\gamma t} \left\Vert y_{0} - z_{\varepsilon}(\varphi_{\ast}) \right\Vert, \quad t \ge 0.
\end{equation}
Let us note that since $\varphi_{\ast}$ can be found using the method of subsequent approximations, $\varphi_{\ast}(y_{0},\varphi_{0})$ continuously depends on $(y_{0},\varphi_{0})$. It follows from~\eqref{eq:ineq_varphi-phi} for $t=0$ that $\theta(z_{\varepsilon}(\varphi_{0}), z_{\varepsilon}(\varphi_{0}), \varphi_{0}) = \varphi_{0}$,
and therefore, $\varphi_{\ast}(z_{\varepsilon}(\varphi_{0}), \varphi_{0}) = \varphi_{0}$. Thus, $\varphi_{\ast}(y_{0}, \varphi_{0}) \to\varphi_{0}$ when $y_{0} \to z_{\varepsilon}(\varphi_{0})$. But then at the same time $z_{\varepsilon}(\varphi_{\ast}(y_{0}, \varphi_{0})) \to y_{0}$. By~\eqref{eq:attractT_eps} this yields that for any $\Delta>0$ there exists such $\delta>0$, that the forward trajectory of a point which lies in a $\delta$-neighborhood of the torus $\mathcal{T}_{\varepsilon}$ belongs to a $\Delta$-neighborhood of this torus and is attracted to the latter. This does mean that $\mathcal{T}_{\varepsilon}$ is a local attractor.
\end{proof}

\begin{remark}
In case of a quasi-periodic flow on an invariant torus, an estimate similar to~\eqref{eq:attractT_eps} was obtained in~\cite{Sam97}.
\end{remark}

\begin{proposition}
The attraction basin of the invariant torus $\mathcal{T}_{\varepsilon}$ contains the forward invariant set $\left[ V_{0}^{-1} \left( [0, \left| \ln\varepsilon^{k} \right| ] \right) \cap \mathcal{S}_{\varrho} \right] \times  B_{R^{\ast}}^{m} \times \mathbb{T}^{n}$.
\end{proposition}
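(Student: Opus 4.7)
My plan is to chain the two main propositions of this section together, bridging the scale gap between them by invoking the dissipativity~\eqref{eq:est_dissip} inside $B_\sigma^{n+m}(y^\ast)$. On one hand, Proposition~\ref{prop:moment_t_eps} drives every trajectory launched from $[V_0^{-1}([0,|\ln\varepsilon^k|]) \cap \mathcal{S}_\varrho] \times B_{R^\ast}^m \times \mathbb{T}^n$ into the ball $B_{C^\ast\sqrt\varepsilon}^{n+m}(y^\ast)$ in finite time $t_\varepsilon$ and keeps it there. On the other, Proposition~\ref{prop:lipinvtor} only provides local attraction of $\mathcal{T}_\varepsilon$ inside the $\varepsilon(1-\varrho(\varepsilon))$-neighborhood of $y_\ast(\varepsilon)$. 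The heart of the proof is therefore to contract the trajectory one further order of magnitude, from the $O(\sqrt\varepsilon)$-shell down to the $O(\varepsilon)$-shell. Shrinking $\varepsilon_0$ if necessary, we may arrange $B_{C^\ast\sqrt\varepsilon}^{n+m}(y^\ast) \subset B_\sigma^{n+m}(y^\ast)$, so that~\eqref{eq:est_dissip} is active along the trajectory for all $t \ge t_\varepsilon$.

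To perform this extra contraction I would introduce a reference trajectory on $\mathcal{T}_\varepsilon$ starting at $(z_\varepsilon(\varphi(t_\varepsilon)),\varphi(t_\varepsilon))$, call its components $(y^T(\cdot),\varphi^T(\cdot))$, and track the deviations $w(t) := y(t) - y^T(t)$ and $\delta(t) := \varphi(t) - \varphi^T(t)$ with $\delta$ measured modulo $\mathbb{T}^n$. The invariance of the torus keeps $y^T(t)$ inside $B_\sigma^{n+m}(y^\ast)$, so by convexity the whole segment joining $y(t)$ and $y^T(t)$ lies in the region where~\eqref{eq:est_dissip} applies. Combining this with the Lipschitz bounds~\eqref{eq:ineq_om+H}--\eqref{eq:ineq_F_G} and the mean value theorem produces
\begin{equation*}
	\frac{\mathrm{d}}{\mathrm{d}t}\|w\| \le -2\varepsilon\gamma\|w\| + K\varepsilon^{N/2}\|\delta\|, \qquad \|\delta(t)\| \le \pi\sqrt{n},
\end{equation*}
the second bound coming from the compactness of the torus and the periodicity of $G$ in $\varphi$. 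Gronwall on $[t_\varepsilon, t_\varepsilon + \tau]$ then yields $\|w(t_\varepsilon + \tau)\| \le 2C^\ast\sqrt\varepsilon\,\mathrm{e}^{-2\varepsilon\gamma\tau} + O(\varepsilon^{N/2-1})$; since Remark~\ref{rem: N5} allows us to take $N \ge 5$, the remainder is $o(\varepsilon)$, and choosing $\tau^\ast$ of order $|\ln\varepsilon|/\varepsilon$ drops the exponential term below $\varepsilon/4$. The triangle inequality $\|y(t) - y_\ast(\varepsilon)\| \le \|w(t)\| + \|z_\varepsilon(\varphi^T(t)) - y_\ast(\varepsilon)\| \le \|w(t)\| + \varepsilon\varrho(\varepsilon)$ then places $y(t_\varepsilon + \tau^\ast)$ inside the basin certified by Proposition~\ref{prop:lipinvtor}.

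Once inside that basin, Proposition~\ref{prop:lipinvtor} closes the argument: the forward trajectory synchronizes exponentially with a trajectory on $\mathcal{T}_\varepsilon$. Because the starting set is itself forward invariant (as shown during the proof of Proposition~\ref{prop:moment_t_eps}), this applies at every point of $[V_0^{-1}([0,|\ln\varepsilon^k|]) \cap \mathcal{S}_\varrho] \times B_{R^\ast}^m \times \mathbb{T}^n$, establishing the claim. I expect the main obstacle to be precisely the control of the long waiting interval $\tau^\ast = O(|\ln\varepsilon|/\varepsilon)$: a naive lift of $\|\delta\|$ to $\mathbb{R}^n$ through $\|\dot\delta\| \le K\|w\| + K\varepsilon^{N/2}\|\delta\|$ would grow linearly and eventually swamp the contraction, so the compactness of $\mathbb{T}^n$ is essential, as is the condition $N \ge 5$ from~\textbf{C5} which keeps the residual forcing $K\pi\sqrt{n}\,\varepsilon^{N/2}$ strictly smaller than $\varepsilon\gamma$ and therefore allows the transition from the $O(\sqrt\varepsilon)$-scale to the $O(\varepsilon)$-scale to actually close.
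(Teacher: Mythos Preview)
Your argument is correct, but it takes a genuinely different route from the paper's. The paper does not try to contract the trajectory dynamically from the $O(\sqrt{\varepsilon})$-scale down to the $O(\varepsilon)$-scale. Instead, after invoking Propositions~\ref{prop:glob_sol_nf} and~\ref{prop:moment_t_eps} to land in $B_\sigma^{n+m}(y^\ast)$, it shows in one stroke that \emph{all} of $B_\sigma^{n+m}(y^\ast)\times\mathbb{T}^n$ lies in the basin of $\mathcal{T}_\varepsilon$: pick any finite chain $y_0,y_1,\ldots,y_I$ with $\|y_{i-1}-y_i\|<\varepsilon$ and $y_I$ in the $\varepsilon(1-\varrho(\varepsilon))$-ball around $y_\ast(\varepsilon)$, then apply Proposition~\ref{prop:def_theta} iteratively to produce a $\varphi_I$ with $(y_0,\varphi_0)$ asymptotic to $(y_I,\varphi_I)$, and finish with Proposition~\ref{prop:lipinvtor}. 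So the paper exploits the asymptotic-phase foliation built in Proposition~\ref{prop:def_theta}, whereas you bypass Proposition~\ref{prop:def_theta} entirely and use only the raw dissipativity~\eqref{eq:est_dissip} together with the periodicity of $G$ in $\varphi$.

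What each approach buys: your Gronwall argument is more elementary and self-contained --- it needs only~\eqref{eq:est_dissip}, the boundedness of the torus distance, and $N\ge 5$ to make the forcing $\varepsilon^{N/2-1}=o(\varepsilon)$; it does, however, rely on first getting down to the $O(\sqrt{\varepsilon})$-ball via Proposition~\ref{prop:moment_t_eps}. The paper's chain argument is slicker once Proposition~\ref{prop:def_theta} is in hand, yields the stronger conclusion that the whole $\sigma$-ball (not just the $C^\ast\sqrt{\varepsilon}$-ball) is in the basin, and avoids the explicit $O(|\ln\varepsilon|/\varepsilon)$ waiting time. Your concern about controlling $\|\delta\|$ on that long interval is well-founded and correctly resolved by working with the torus metric; note that the paper's approach sidesteps this issue altogether because each step of the chain already comes with the exponential phase estimate~\eqref{eq:ineq_varphi-phi}.
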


\begin{proof}
Turning back to Propositions~\ref{prop:glob_sol_nf} and~\ref{prop:moment_t_eps}, it is enough to prove that $B_{\sigma}^{n+m}(y^{\ast}) \times \mathbb{T}^{n}$ lies in the attraction basin of the torus $\mathcal{T}_{\varepsilon}$. Suppose that $(y_{0},\varphi_{0})$ is an arbitrary point of the domain $B_{\sigma}^{n+m}(y^{\ast}) \times \mathbb{T}^{n}$. Let us choose a finite series of points $\left\{ y_{i} \right\}_{i=1}^{I}$ such that $\left\Vert y_{i-1} - y_{i} \right\Vert <\varepsilon$ with the last point $y_{I}$ lying in an $\varepsilon(1 - \varrho(\varepsilon)) $-neighborhood of the point $y_{\ast}(\varepsilon)$. Then, applying Proposition~\ref{prop:def_theta} step by step, we can prove that there exists such $\varphi_{I}\in\mathbb{T}^{n}$ that the forward trajectory of the point $(\varphi_{0},y_{0})$ is attracted to the forward trajectory of the point $(\varphi_{I},y_{I})$, which in turn, according to Proposition~\ref{prop:lipinvtor}, is attracted to the torus $\mathcal{T}_{\varepsilon}$. Thus, under action of the semi-flow of system~\eqref{eq:sys_phi_y} $(\varphi_{0},y_{0})$ is attracted to the torus $\mathcal{T}_{\varepsilon}$.
\end{proof}

\section{Conclusion}

In this paper, we have analyzed a kind of transient processes that are observable in a fast-slow system in a neighborhood of an i.~m.~s.~m. and that can be interpreted as a dynamical bifurcation of multi-frequency oscillations. The change in the phase variables $x(t)$ behavior --- the switch from damping oscillations to the multi-frequency ones, which are asymptotically close to motions on the invariant torus $\mathcal{T}_{\varepsilon}$ --- is caused by the slow evolution of the parameters $u(t)$, which results in transition of the latter from the stability zone $\mathcal{D}_{s}$ to the zone of complete instability $\mathcal{D}_{u}$.

 We should note that there is certain connection between the obtained results and the theory of bifurcations without parameters (see~\cite{Lie11} and references there). In~\cite{Lie11} system~\eqref{eq:slman1} was investigated in the case where the i.~m.~s.~m. $x=0$ consists completely of equilibria and the spectrum of the operator $f_{x}^{\prime}(0,0,\varepsilon)$ lies on the imaginary axis. In this situation the transformations of the phase portrait are caused by different kinds of hyperbolicity of the system of the first approximation with respect to points of the i.~m.~s.~m. $(0,u)$ if $u\ne0$.

\end{document}